\numberwithin{equation}{section}
\newcommand{\F}{\ensuremath{F^s_{p,q}}}
\newcommand{\B}{\ensuremath{B^s_{p,q}}}
\newcommand{\Bd}{\ensuremath{\mathbf{B}^s_{p,q}}}
\newcommand{\Bdx}[3]{\ensuremath{\mathbf{B}^{#1}_{#2,#3}}}
\newcommand{\real}{\ensuremath{\mathbb{R}}}
\newcommand{\R}{\ensuremath{\mathbb{R}}}
\newcommand{\rn}{\ensuremath{\real^{n}}}
\newcommand{\nat}{\ensuremath{\mathbb{N}}}
\newcommand{\no}{\ensuremath{\nat_0}}
\newcommand{\non}{\ensuremath\mathbb{N}^n_0}
\newcommand{\zn}{\ensuremath{\mathbb{Z}^n}}
\newcommand{\comp}{\ensuremath{\mathbb{C}}}
\newcommand{\dint}{\mathrm{d}}
\newcommand{\bit}{\begin{itemize}}
\newcommand{\eit}{\end{itemize}}
\newcommand{\beq}{\begin{equation}}
\newcommand{\eeq}{\end{equation}}
\newcommand{\ds}{\displaystyle}
\newcommand{\supp}{\mathrm{supp}\,}
\newcommand{\Lip}{\mathrm{Lip}}
\newcommand\N{\mathbb{N}}
\newcommand\Z{\mathbb{Z}}
\newcommand\eps{\varepsilon}
\newcommand{\Bselfs}{\mathbf{B}^s_{p,q,\mathrm{selfs}}}
\newcommand{\selfs}{\mathrm{selfs}}
\newcommand{\ud}{\mathrm{d}}
\newcommand{\uD}{D}
\newtheorem{lemma}{Lemma}[section]
\newtheorem{proposition}[lemma]{Proposition}
\newtheorem{theorem}[lemma]{Theorem}
\newtheorem{corollary}[lemma]{Corollary}
\newtheorem{definition}[lemma]{Definition}
\newtheorem{rem}[lemma]{Remark}
\newcommand{\remark}[1]{\begin{rem}{\upshape #1}\end{rem}}
\newtheorem{example}[lemma]{Example}
\newcommand{\proofstart}{\mbox{P\,r\,o\,o\,f\, :\quad}}
\newcommand{\proofend}{\nopagebreak\hfill\raisebox{0.3em}{\fbox{}}\\}
\newenvironment{proof}{\proofstart}{\proofend}
\newcommand{\tr}{\mathrm{Tr}\,}
\newcommand{\Ex}{\mathrm{Ex}\,}
\begin{document}

\title{Non-smooth atomic decompositions, traces on Lipschitz domains, and pointwise multipliers in function spaces}
\author{Cornelia Schneider\footnote{Applied Mathematics III, University Erlangen--Nuremberg, Cauerstra\ss{}e 11, 91058 Erlangen,
Germany, email: {\tt schneider@am.uni-erlangen.de}} 
\ and Jan Vyb\'iral\footnote{Johann Radon Institute for Computational and
Applied Mathematics, Austrian Academy of Sciences, Altenbergerstra\ss e 69, A-4040 Linz, Austria,
email: {\tt  jan.vybiral@oeaw.ac.at} } \footnote{Corresponding author}}


\maketitle

\begin{abstract}
\noindent
We provide non-smooth atomic decompositions for Besov spaces $\Bd(\rn)$, $s>0$, $0<p,q\leq \infty$, defined via differences.
The results are used to compute the
trace of Besov spaces  on the boundary $\Gamma$ of bounded Lipschitz domains $\Omega$ with smoothness $s$
restricted to $0<s<1$ and no further restrictions on the parameters $p,q$. We conclude with some more
applications in terms of pointwise multipliers.\\

\noindent
\end{abstract}

\noindent
{\bf Math Subject Classifications (MSC2010):} 46E35, 42B35, 47B38.
\\

\noindent
{\bf Keywords and Phrases:} Lipschitz domains, Besov spaces, differences, real interpolation, atoms, traces, pointwise multipliers.

\section*{Introduction}

Besov spaces -- sometimes briefly denoted as B-spaces in the sequel -- of positive smoothness, have been investigated for many decades already,
resulting, for instance, from the study of partial differential equations, interpolation
theory, approximation theory, harmonic analysis. \\
There are several definitions of Besov spaces  $\Bd(\rn)$ to be found in the literature. Two of the most prominent approaches are the {\em Fourier-analytic approach} using Fourier transforms on the one hand and the {\em classical approach} via higher order differences involving the modulus of smoothness on the other. 
These two definitions are equivalent only with certain restrictions on the parameters, 
in particular, they differ for  $0<p<1$ and  $0<s\leq n(\frac1p-1)$, but may otherwise share similar
properties. \\
In the present paper we  focus on the {\em classical
  approach}, which introduces $\Bd(\rn)$ as those subspaces of $L_p(\rn)$ such
that 
\[
\|f|\Bd(\rn)\|_r = \|f|L_p(\rn)\| + \left(\int_0^1 t^{-sq} \omega_r(f,t)_p^q
\ \frac{\dint t}{t}\right)^{1/q}
\]
is finite, where $0<p,q\leq \infty$, $s>0$, $r\in\nat$ with $r>s$, and
$\omega_r(f,t)_p$ is the usual $r$-th modulus of smoothness of $f\in
L_p(\rn)$. \\
These spaces occur naturally in nonlinear approximation theory. Especially important is the case \mbox{$p<1$},  which is needed for the description of 
approximation classes of classical methods such as rational approximation and approximation by splines with free knots.  For more details we refer 
to the introduction of   \cite{dVP}.\\ 
For our purposes it will be convenient to use an equivalent characterization for the classical Besov spaces, cf. \cite{H-N}, \cite[Sect.~9.2]{T-F3}, 
and also \cite[Th.~2.11]{sch10},  relying on {\em smooth atomic decompositions}. They which allow us to characterize
$\Bd(\rn)$  as the space of those $f\in L_p(\rn)$ which can be represented as
\beq\label{intro-1}
f(x) = \sum_{j=0}^\infty \sum_{m\in\zn} \lambda_{j,m}
a_{j,m}(x),\quad x\in\rn,
\eeq
with the sequence of coefficients $\lambda = \{\lambda_{j,m} \in \comp:  j\in\no,
m\in\zn\}$ belonging to some appropriate sequence space $b^{s}_{p,q}$, where
$s>0$, $0<p,q\leq\infty$, and with smooth atoms $ a_{j,m}(x)$.\\

It is one of the  aims of the present paper to develop non-smooth atomic decompositions for Besov spaces $\Bd(\rn)$, cf. Theorem \ref{th:atom-dec} and Corollary \ref{th:atom-dec-3}. We will show that  one can relax the assumptions on the smoothness of the atoms $a_{j,m}$ used in the representation \eqref{intro-1} and, thus, replace these atoms with more general ones without loosing any crucial information 
compared smooth atomic decompositions for functions $f\in \Bd(\rn)$.  \\
There are only few forerunners dealing with non-smooth atomic decompositions in function spaces so far. 
We refer to the papers  \cite{tri03},\cite{MPP07}, and \cite{CL09},  all mainly considering the different Fourier-analytic
approach for Besov spaces and having in common that they restrict themselves to the technically simpler case when $p=q$.
Our  approach generalizes and extends these results and seems to be the first one covering the full range of indices 
$0<p,q\leq \infty$. The reader may also consult \cite{SchB11} for another generalization of the classical atomic decomposition
technique using building blocks of limited smoothness.\\

The additional freedom we gain in the choice of suitable non-smooth atoms $a_{j,m}$ for the atomic decompositions  
of $f\in \Bd(\rn)$ makes this approach well suited to further investigate Besov spaces $\Bd(\Omega)$ on non-smooth 
domains $\Omega$ and their boundaries $\Gamma$. In particular, we shall focus on  bounded Lipschitz domains  and start 
by  obtaining some interesting new properties concerning interpolation and equivalent quasi-norms for these spaces as 
well as an atomic decomposition for Besov spaces $\Bd(\Gamma)$, defined on the boundary $\Gamma=\partial\Omega$ of a Lipschitz domain.  \\

But the main goal of this article is to demonstrate the strength of the newly developed non-smooth atomic decompositions in view of trace results. 
The trace is taken with respect to the boundary $\Gamma$ of bounded Lipschitz domains $\Omega$. 
Our main result reads as 
\[
\tr \mathbf{B}^{s+\frac 1p}_{p,q}(\Omega)=\mathbf{B}^{s}_{p,q}(\Gamma), 
\]
where $n\geq 2$, $0<s<1$, and $0<p,q\leq\infty$, cf. Theorem \ref{B-trace-lip}. Its proof reveals how well suited non-smooth atoms 
are in order to tackle this problem. The limiting case $s=0$ is also  
considered in  Corollary \ref{B-trace-lim-n}. \\
In the range $0<s<1$, our results are optimal in the sense that 
there are no further restrictions on the parameters $p$, $q$.
The fact that we now also cover traces in Besov spaces $\Bd(\rn)$ with $p<1$ could 
be of particular interest in nonlinear approximation theory.\\
Moreover, as a by-product we obtain corresponding trace results on Lipschitz domains for Triebel-Lizorkin spaces, defined via atomic decompositions. \\
The papers \cite{sch08c} and \cite{sch10}, dealing with traces on hyperplanes and smooth domains, respectively, might be considered as forerunners of the trace  results established in this paper. Nevertheless, the methods we use now are completely different. \\ 
The same question for $s\ge 1$ was studied in \cite{JW84}. It turns out that in this case the function spaces on the boundary look
very different and also the extension operator must be changed. 
Moreover, based on the seminal work \cite{JK95}, traces on Lipschitz domains were studied in \cite[Th.~1.1.3]{May05} 
for the Fourier-analytic Besov spaces with the natural restrictions
\begin{equation}\label{eq:May}
(n-1)\max \left(\frac 1p-1,0\right)<s<1\quad \text{ and  }\quad \frac{n-1}{n}<p.
\end{equation}
Our Theorem \ref{B-trace-lip} actually covers and extends \cite[Th.~1.1.3]{May05}, as for the parameters restricted by \eqref{eq:May}
the Besov spaces defined by differences coincide with the Fourier-analytic Besov spaces. 

In contrast to {\sc Mayboroda} we make use of the classical Whitney extension operator 
and the cone property of Lipschitz domains in order 
to establish our results instead of potential layers and interpolation. Moreover, the extension operator we construct 
is not linear -- and in fact cannot be whenever $s<(n-1)\max (\frac 1p-1,0)$ -- compared to the extension operator 
in \cite[Th.~1.1.3]{May05}. Let us recall that the importance of non-linear extension operators is known in the theory
of differentiable spaces since the pioneering work of Gagliardo \cite{Ga57}, cf. also \cite[Chapter 5]{Bu98}.


Finally, we shall use the non-smooth atomic decompositions again to deal with pointwise multipliers in the respective function spaces. Let $\Bselfs(\rn)$ denote the self-similar spaces introduced in Definition \ref{def-Bselfs} and $M(\Bd(\rn))$ the set of all pointwise multipliers of $\Bd(\rn)$. We prove for  $s>0$, $0<p,q\leq \infty$  in Theorem \ref{th:multipliers-1}  the relationship 
\beq\label{intro-2} 
\bigcup_{\sigma>s}\mathbf{B}^{s}_{p,q,\selfs}(\rn)\subset M(\Bd(\rn))\hookrightarrow \mathbf{B}^{s}_{p,q,\selfs}(\rn).
\eeq
Additionally, if $0<p\leq 1$, one even has  a coincidence in terms of 
$\ 
M(\mathbf{B}^s_{p,p}(\rn))=\mathbf{B}^{s}_{p,p,\selfs}(\rn). 
\ $
Our results generalize the multiplier assertions from \cite{tri03} to the case when $p\neq q$. Moreover, they extend previous results to classical Besov spaces with small parameters $s$ and $p$. In this context we refer to \cite{mazya}, \cite{MaSh85}, and \cite{MaSh09}, where pointwise multipliers in Besov spaces with $p,q\geq 1$ and $p=q$ were studied in detail. \\
We conclude using \eqref{intro-2} in order to discuss under which circumstances the characteristic function $\chi_{\Omega}$ of a bounded domain $\Omega$  in $\rn$ is a pointwise multiplier in $\Bd(\rn)$ -- establishing a connection between pointwise multipliers and  
certain fundamental notion of fractal geometry, so-called $h$-sets, cf. Definition \ref{def-h-set}. In particular, 
if a boundary $\Gamma=\partial \Omega$  is an $h$-set satisfying
$$
\sup_{j\in\nat_0}\sum_{k=0}^{\infty}2^{k\sigma q}\left(\frac{h(2^{-j})}{h(2^{-j-k})}2^{-kn}\right)^{q/p}<\infty,
$$
where  $\sigma>0$, $0<p<\infty$, and $0<q\leq\infty$, then Theorem \ref{th:multipliers-2} shows  that
\[
\chi_{\Omega}\in \mathbf{B}^{\sigma}_{p,q,\selfs}(\rn).
\]


The present paper is organized as follows: Section 1 contains notation, definitions, and preliminary assertions on smooth atomic decompositions.
The main  investigation starts in  Section 2, where we construct non-smooth atomic decompositions for the spaces under focus. Afterwards Section 3 
provides new insights  (and helpful results) concerning function spaces on Lipschitz domains and their boundaries. These powerful techniques are 
then used in Section 4 in order to compute traces on Lipschitz domains --  the heart of this article. Finally, we conclude with some further 
applications of non-smooth atomic decompositions in terms of pointwise multipliers in Section 5.\\


\section{Preliminaries}

We use standard notation. Let $\nat$ be the collection of all natural numbers
and let $\no = \nat \cup \{0 \}$. Let $\rn$ 
be euclidean $n$-space, $n \in \nat$, $\comp$ the complex plane. The set of
multi-indices $\beta=(\beta_1, \dots, \beta_n)$, $\beta_i\in\no$, $i=1, \dots,
n$, is
denoted by $\non$, with $|\beta|=\beta_1 + \cdots + \beta_n$, as
usual. Moreover, if $x=(x_1, \dots, x_n)\in\rn$ and $\beta=(\beta_1, \dots,
\beta_n)\in\non$ we put $x^\beta=x_1^{\beta_1} \cdots x_n^{\beta_n}$. \\
We use the symbol \ '$\lesssim$'\  in 
\[
a_k \lesssim b_k \quad \mbox{or} \quad \varphi(x) \lesssim \psi(x)
\]
always to mean that there is a positive number $c_1$ 
such that
\[
a_k \leq c_1\,b_k  \quad \mbox{or} \quad
\varphi(x) \leq c_1\,\psi(x) 
\]
for all admitted values of the discrete variable $k$ or the
continuous variable $x$, where $\{a_k\}_k$, $\{b_k\}_k$ are
non-negative sequences and $\varphi$, $\psi$ are non-negative
functions. 
We use the equivalence \ `$\sim$' \ in
\[a_k \sim b_k \quad \mbox{or} \quad \varphi(x) \sim \psi(x)\]
for 
\[
a_k\lesssim b_k\quad \text{and} \quad b_k\lesssim a_k \qquad \text{or}\qquad 
\varphi(x) \lesssim \psi(x)\quad \text{and} \quad \psi(x) \lesssim \varphi(x).
\]
If $a\in\real$, then $a_+ := \max(a,0)$ and $[a]$
denotes the integer part of $a$.\\
Given two (quasi-) Banach spaces $X$ and $Y$, we write $X\hookrightarrow Y$
if $X\subset Y$ and the natural embedding of $X$ into $Y$ is continuous. 
All unimportant positive constants will be denoted by $c$, occasionally with
subscripts. For convenience, let both $ \dint x $ and $ |\cdot| $ stand for the
($n$-dimensional) Lebesgue measure in the sequel. 
$L_p(\rn)$, with $0<p\leq\infty$, stands for the usual quasi-Banach space with respect to the Lebesgue measure, quasi-normed by
\[
\|f|L_p(\rn)\|:=\left(\int_{\rn}|f(x)|^p\ud x\right)^{\frac 1p}
\]
with the appropriate modification if $p=\infty$. Throughout the paper  $\Omega$ will denote a domain in $\rn$ and  the Lebesgue space  $L_p(\Omega)$ is defined in the usual way. \\

We denote by $C^K(\rn)$ the space of all $K$-times continuously differentiable functions $f:\R^n \to\R$
equipped with the norm
$$
\|f|C^K(\R^n)\|=\max_{|\alpha|\le K} \sup_{x\in\R^n} |D^\alpha f(x)|.
$$
Additionally,  $C^{\infty}(\rn)$ contains the set of smooth and bounded functions on $\rn$, i.e., 
$$\ 
C^{\infty}(\rn):=\bigcap_{K\in\nat}C^K(\rn),
\ $$
whereas $C^{\infty}_0(\rn)$ denotes the space of smooth functions with compact support. 

Furthermore, $B(x_0,R)$ stands for an open ball with radius $R>0$ around $x_0\in\rn$, 
\beq\label{op_ba}
B(x_0,R)=\{x\in\rn:\, |x-x_0|<R\}.
\eeq
Let $Q_{j,m}$ with $j\in\no$ and $m\in\zn$ denote a cube in
$\rn$ with sides parallel to the axes of coordinates, centered at
$2^{-j}m$, and with side length $2^{-j+1}$. For a cube $Q$ in $\rn$ and $r>0$,
we denote by  $rQ$  the cube in $\rn$ concentric with $Q$ and with side length
$r$ times the side length of $Q$. 
Furthermore, $\chi_{j,m}$ stands for the characteristic function of $Q_{j,m}$.\\

Let $G\subset \rn$ and $j\in\N_0$. We use the abbreviation
\beq
{\sum_{m\in\zn}}^{G,j}=\sum_{m\in\mathbb{Z}^n, Q_{j,m}\cap G\neq\emptyset},
\eeq
where $G$ will usually denote either a domain $\Omega$ in $\rn$ or its boundary $\Gamma$.


\subsection{Smooth atomic decompositions in function spaces}

We introduce the Besov  spaces $\Bd(\Omega)$ through their decomposition properties.
This provides a constructive definition expanding functions $f$ via smooth atoms (excluding any moment conditions) and suitable coefficients, 
where the latter belong to certain sequence spaces denoted by $b^s_{p,q}(\Omega)$ defined below. 


\begin{definition}\label{def-seq-dom}
Let $0<p,q\leq\infty$, $s\in\real$. Furthermore, let $\Omega\subset\rn$ and $\lambda=\{\lambda_{j,m}\in\mathbb{C}:j\in\mathbb{N}_0, m\in\zn\}$. 
Then\index{sequence spaces!of type $b^s_{p,q}(\Omega)$}
\[
b^s_{p,q}(\Omega)=\left\{\lambda: \|\lambda|b^s_{p,q}(\Omega)\|=\left(\sum_{j=0}^{\infty}2^{j(s-\frac np)q}\left({\sum_{m\in\zn}}^{\Omega,j}|\lambda_{j,m}|^p\right)^{q/p}\right)^{1/q}<\infty\right\}
\]
$($with the usual modification if $p=\infty$ and/or $q=\infty$$)$.  
\end{definition}

\remark{If $\Omega=\rn$, we simply write 
 $b^s_{p,q}$ and $\sum_{m}$  instead of  $b^s_{p,q}(\Omega)$ and ${\sum_m}^{\Omega,j}$, respectively.}

Now we define the smooth  atoms.

\begin{definition}\label{def-atoms}
 Let $K\in\mathbb{N}_0$ and $d>1$. A $K$-times continuously differentiable complex-valued function $a$ on $\rn$ $($continuous if $K=0$$)$ 
 is called a $K$-atom if for some $j\in\mathbb{N}_0$
\beq\label{supp_1}
\supp a\subset dQ_{j, m}\quad \text{for some } m\in\zn,
\eeq
and 
\beq\label{smooth}
|\uD^\alpha a(x)|\leq 2^{|\alpha|j} \quad\text{ for } |\alpha|\leq K.
\eeq
\end{definition}

It is convenient to write $a_{j,m}(x)$ instead of $a(x)$ if this atom is located at $Q_{j,m}$ according to \eqref{supp_1}. Furthermore, $K$ denotes the smoothness of the atom, cf.~\eqref{smooth}. \\

We define Besov spaces $\Bd(\Omega)$ using the {\em atomic approach}. 

\smallskip

\begin{definition}\label{intr_char}
 Let $s>0$ and $0<p,q\leq\infty$. Let $d>1$ and $K\in\mathbb{N}_0$  with
\[
K\geq (1+[s]) 
\]
be fixed. Then $f\in L_p(\Omega)$ belongs to $\Bd(\Omega)$ if, and only if, it can be represented as
\beq\label{repr_01'}
f(x)=\sum_{j=0}^\infty{\sum_{m\in\zn}}^{\Omega,j}\lambda_{j,m}a_{j,m}(x),
\eeq
where the $a_{j,m}$ are $K$-atoms $(j\in\mathbb{N}_0)$ with
\[
\supp a_{j,m}\subset dQ_{j,m},\qquad j\in\mathbb{N}_0, \quad m\in\zn,
\]
and $\lambda\in b^{s}_{p,q}(\Omega)$, convergence being in $L_p(\Omega)$. Furthermore,
\beq\label{norm-inf1}
\|f|\Bd(\Omega)\|:=\inf\|\lambda|b^s_{p,q}(\Omega)\|,
\eeq
where the infimum is taken over all admissible representations \eqref{repr_01'}. \\
\end{definition}

\remark{\label{extr_char} 
According to \cite{T-F3}, based on \cite{H-N}, the above defined spaces  are independent of $d$ and $K$. 
This may justify our omission of $K$ and $d$ in \eqref{norm-inf1}. \\
Since the atoms $a_{j,m}$ used in Definition \ref{intr_char} are defined also outside of $\Omega$, the spaces $\Bd(\Omega)$ can as well be regarded as restrictions of the corresponding spaces on $\rn$ in the usual interpretation, i.e.,
\[
\Bd(\Omega)=\{f\in L_p(\Omega): \quad\text{there exists}\quad g\in \Bd(\rn) \quad\text{with}\quad  g\big|_\Omega=f\},
\]
furnished with the norm
\[
\|f|\Bd(\Omega)\|=\inf \left\{ \|g|\Bd(\rn)\| \quad \text{with}\quad g\big|_\Omega=f\right\},
\]
where $g\big|_\Omega=f$ denotes the restriction of $g$ to $\Omega$.  
Therefore, well-known embedding results for B-spaces defined on $\rn$ carry over to those defined on domains $\Omega$. Let $s>0$, $\varepsilon>0$, $0<q,u\leq\infty$, and $q\leq v\leq \infty$. Then we have
\[
\mathbf{B}^{s+\varepsilon}_{p,u}(\Omega)\hookrightarrow \Bd(\Omega)\qquad \text{and}\qquad \Bd(\Omega)\hookrightarrow \mathbf{B}^s_{p,u}(\Omega),
\]
cf. \cite[Th.~1.15]{HS08}, where also further embeddings for Besov spaces may be found.
}
\smallskip

\paragraph{Classical approach}
Originally Besov spaces were defined merely using higher order differences instead of atomic decompositions. The question arises whether this {\em classical approach} coincides with our {\em atomic approach}. This might not always be the case but is true for spaces defined on $\rn$ and on  so-called  $(\varepsilon,\delta)$-domains which we introduce next. \\

Recall that domain always stands for open set. The boundary of $\Omega$ is denoted by $\Gamma=\partial\Omega$.

\begin{definition}\label{def-dom}
Let $\Omega$ be a domain in $\rn$ with $\Omega\neq \rn$. Then $\Omega$ is said to be an $(\varepsilon,\delta)$-domain\index{domains!$(\varepsilon,\delta)$-domain}, where $0<\varepsilon<\infty$ and $0<\delta<\infty$, if it is connected and if for any $x\in \Omega$, $y\in\Omega$ with $|x-y|<\delta$ there is a curve $L\subset\Omega$, connecting $x$ and $y$ such that $|L|\leq \varepsilon^{-1}|x-y|$ and 
\beq\label{eps-dom-cond}
\mathrm{dist}(z,\Gamma)\geq \varepsilon \min(|x-z|,|y-z|),\qquad z\in L.
\eeq 
\end{definition}

\remark{
All domains we will be concerned with in the sequel are  $(\varepsilon, \delta)$-domains. In particular, the definition includes \emph{minimally smooth} domains in the sense of Stein, cf. \cite[p.~189]{stein}, and therefore bounded Lipschitz domains (as will be considered in Section \ref{sec-3}). \\
Furthermore, the half space
$\ \rn_+~:=\{x:x=(x',x_n)\in\rn, x'\in\mathbb{R}^{n-1}, x_n>0\}\ $ is another example.

}

It is well-known that $(\varepsilon, \delta)$-domains play a crucial role concerning questions of extendability. It is precisely this property which was used in \cite[Th.~2.10]{sch10} to show that for $(\varepsilon, \delta)$-domains the atomic approach for B-spaces is equivalent to the {\em classical approach} (in terms of equivalent quasi-norms), 
which introduces $\Bd(\Omega)$ as the subspace of $L_p(\Omega)$ such that 
\beq\label{class-B-dom}
\|f|\Bd(\Omega)\|_r = \|f|L_p(\Omega)\| + \left(\int_0^1 t^{-sq} \omega_r(f,t,\Omega)_p^q
\ \frac{\dint t}{t}\right)^{1/q}
\eeq
is finite, where $0<p,q\leq \infty$ $($with the usual modification if $q=\infty$$)$, $s>0$, $r\in\nat$ with $r>s$. Here $\omega_r(f,t,\Omega)_p$ stands for the usual $r$-th modulus of smoothness of a
function $f\in L_p(\Omega)$,
\beq\label{def-Bd-dom}
\omega_r(f,t,\Omega)_p=\sup_{|h|\leq t} \|\Delta_h^r f(\cdot, \Omega)\mid L_p(\Omega)\|, \quad
t>0,
\eeq
where 
\begin{equation}\label{eq:finalJV2}
\Delta^r_hf(x,\Omega):=
\begin{cases}
 \Delta^r_h f(x),& x,x+h,\dots,x+rh\in\Omega,\\
 0, & \text{otherwise},
\end{cases}
\end{equation}
This approach for the spaces $\Bd(\Omega)$ was used in \cite{dVS93}.
The proof of the coincidence uses the fact that the classical and atomic approach can be identified for spaces defined on $\rn$, which follows from results by {Hedberg, Netrusov}
\cite{H-N} on atomic decompositions and by {Triebel}\cite[Section~9.2]{T-F3} on the reproducing formula. \\


The classical scale of Besov spaces contains many well-known function
spaces. For example, if $p=q=\infty $, one recovers the H\"older-Zygmund spaces $\ {\mathcal C}^s(\rn)$, i.e., 
\begin{equation}\label{B=Zyg} 
 \Bdx{s}{\infty}{\infty}(\rn) =  {\mathcal C}^s(\rn), \quad s>0.
\end{equation}

Later on we will need the following homogeneity estimate proved recently in \cite[Th.~2]{schvyb11a} based on \cite{CLT07}.

\begin{theorem}\label{hom-B}
Let $0<\lambda\leq 1$ and $f\in \mathbf{B}^s_{p,q}(\R^n)$ with $\supp f\subset B(0,\lambda)$. Then
\beq\label{hom-B-eq}
\|f(\lambda \cdot)|\Bd(\R^n)\|\sim \lambda^{s-n/p}\|f|\Bd(\R^n)\|.
\eeq
\end{theorem}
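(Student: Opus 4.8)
The plan is to establish the homogeneity estimate \eqref{hom-B-eq} by reducing it to the analogous statement for the Fourier-analytic Besov spaces $\Bq(\rn)$, for which the corresponding homogeneity result is available in \cite{CLT07}, and then transferring it via the coincidence of the two scales on the relevant parameter range. The first step is to recall that for $s>0$, $0<p,q\le\infty$, one has the embeddings $\mathbf{B}^{s+\varepsilon}_{p,u}(\rn)\hookrightarrow\Bd(\rn)\hookrightarrow\mathbf{B}^{s}_{p,u}(\rn)$ from Remark \ref{extr_char}, and, more to the point, that whenever $s>n(\frac1p-1)_+$ the space $\Bd(\rn)$ defined via differences coincides (with equivalent quasi-norms) with the Fourier-analytic space $\Bq(\rn)$. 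For parameters with $0<p<1$ and $0<s\le n(\frac1p-1)$ the two scales differ and a direct quotation of \cite{CLT07} is not possible; here one needs the argument of \cite{schvyb11a} itself, which I would reproduce in outline: the estimate is first proved for the Fourier-analytic spaces and then carried over by an independent argument working directly with the differences definition \eqref{class-B-dom}.

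Concretely, I would proceed as follows. Write $f_\lambda:=f(\lambda\cdot)$; since $\supp f\subset B(0,\lambda)\subset B(0,1)$ we have $\supp f_\lambda\subset B(0,1)$. For the $L_p$-term a change of variables gives $\|f_\lambda|L_p(\rn)\|=\lambda^{-n/p}\|f|L_p(\rn)\|$, which already has the right power since $s-n/p\le -n/p$ would be the wrong direction — so one must be careful that the $L_p$-term is dominated by the seminorm term under the support restriction, using $0<\lambda\le1$ and a Poincaré-type / Bernstein-type inequality adapted to functions supported in a ball of radius $\lambda$; this is exactly the point where the support hypothesis $\supp f\subset B(0,\lambda)$ (rather than merely $B(0,1)$) is essential. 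For the modulus of smoothness term, the substitution $h\mapsto\lambda h$, $x\mapsto\lambda x$ in \eqref{def-Bd-dom}–\eqref{eq:finalJV2} yields $\omega_r(f_\lambda,t)_p=\lambda^{-n/p}\omega_r(f,\lambda t)_p$, and then in the integral $\int_0^1 t^{-sq}\omega_r(f_\lambda,t)_p^q\,\frac{dt}{t}$ the substitution $t\mapsto t/\lambda$ produces the factor $\lambda^{sq}$, except that the upper limit of integration changes from $1$ to $1/\lambda\ge1$; the contribution of the range $1\le \lambda t\le 1/\lambda\cdot\lambda=1$... more precisely one must control $\int_1^{1/\lambda} t^{-sq}\omega_r(f,\lambda t)_p^q\frac{dt}{t}$ against $\|f|L_p\|^q$, using $\omega_r(f,\tau)_p\lesssim\|f|L_p\|$ uniformly in $\tau$, and absorb it — again this absorption is where the sharp support condition enters, since it lets one bound $\|f|L_p(\rn)\|$ by $\lambda^{s}$ times the $\Bd$-seminorm of $f$.

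The main obstacle, as indicated, is the two-sided nature of the equivalence together with the behaviour of the $L_p$-term: the naive scaling only gives $\lambda^{-n/p}$ for $\|f_\lambda|L_p\|$, whereas the claimed exponent is $\lambda^{s-n/p}$ with $s>0$, so one genuinely needs that for functions supported in $B(0,\lambda)$ the full quasi-norm is equivalent to just the differences-integral part, with constants independent of $\lambda$. Establishing this self-improvement — essentially a scale-invariant Poincaré inequality on $\Bd$ for compactly supported functions — is the technical heart; it can be obtained by covering $B(0,\lambda)$ by $O(1)$ dilated cubes, applying the equivalence of the classical and atomic descriptions on $(\varepsilon,\delta)$-domains, and invoking the homogeneity already known for the Fourier-analytic spaces via \cite{CLT07} on the range $s>n(\frac1p-1)_+$, handling the remaining small-$s$, small-$p$ range by a separate direct argument. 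Since the statement is quoted from \cite[Th.~2]{schvyb11a}, I would in the paper simply cite it; the sketch above is what the proof there carries out.
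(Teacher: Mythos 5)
The paper itself offers no proof of Theorem \ref{hom-B}: the statement is quoted verbatim from \cite[Th.~2]{schvyb11a}, so your decision to cite that reference is exactly what the authors do. Concerning the sketch you attach, its core --- scaling the $L_p$-term and the modulus of smoothness in the differences quasi-norm \eqref{class-B-dom} --- is the right elementary route; the detour through the Fourier-analytic spaces and \cite{CLT07} is unnecessary and would in any case leave uncovered the range $0<p<1$, $0<s\le n(\frac1p-1)$, which is precisely where the theorem is needed in this paper.

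Two points in the sketch need repair. First, the one genuinely nontrivial ingredient, which you correctly isolate but do not prove, is the scale-invariant Poincar\'e-type inequality $\|f|L_p(\rn)\|\lesssim\lambda^{s}\|f|\Bd(\rn)\|$ for $\supp f\subset B(0,\lambda)$; your proposed route to it (covering by dilated cubes plus the Fourier-analytic homogeneity of \cite{CLT07}) is close to circular. It has a one-line direct proof: choose $h$ with $2\lambda<|h|\le 3\lambda$; then $x+kh\notin B(0,\lambda)$ for every $x\in B(0,\lambda)$ and $1\le k\le r$, so $\Delta_h^rf(x)=(-1)^rf(x)$ on $B(0,\lambda)$, whence $\omega_r(f,3\lambda)_p\ge\|f|L_p(\rn)\|$; integrating $t^{-sq}\omega_r(f,t)_p^q$ over $t\in[3\lambda,6\lambda]$ and using the monotonicity of $\omega_r$ gives the claim for $\lambda\le 1/6$, the remaining values of $\lambda$ being trivial. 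Second, your bookkeeping of where this inequality enters is off: it is needed only for the ``$\lesssim$'' direction of \eqref{hom-B-eq}, to dominate $\|f(\lambda\cdot)|L_p(\rn)\|=\lambda^{-n/p}\|f|L_p(\rn)\|$ by $\lambda^{s-n/p}\|f|\Bd(\rn)\|$ (the seminorm part of that direction is immediate, since the substitution $u=\lambda t$ shrinks the integration range to $(0,\lambda)$). For the reverse direction no support-dependent Poincar\'e inequality is required at all: the tail $\int_\lambda^1 u^{-sq}\omega_r(f,u)_p^q\,\frac{du}{u}\lesssim\lambda^{-sq}\|f|L_p(\rn)\|^q$, obtained from $\omega_r(f,u)_p\le 2^r\|f|L_p(\rn)\|$ and $s>0$, is absorbed directly into the $L_p$-part $\lambda^{-nq/p}\|f|L_p(\rn)\|^q$ of $\|f(\lambda\cdot)|\Bd(\rn)\|^q$. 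With these two corrections your sketch becomes a complete, self-contained proof via the differences characterization.
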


\section{Non-smooth atomic decompositions}
\label{sect-2}

Our aim is to provide a {non-smooth} atomic characterization of Besov  spaces $\Bd(\rn)$, i.e., relaxing the assumptions 
about the smoothness of the atoms $a_{j,m}$ in Definition \ref{def-atoms}. Note that condition \eqref{smooth} is equivalent to
\begin{equation}\label{eq:dil1}
\|a(2^{-j}\cdot)|C^K(\rn)\|\leq 1.
\end{equation}
We replace the $C^K$-norm with $K>s$ by a Besov quasi-norm $\mathbf{B}^{\sigma}_{p,p}(\R^n)$ with $\sigma>s$ or in case of $0<s<1$ by a norm in the space of Lipschitz functions $\Lip(\R^n)$.

The following non-smooth atoms were introduced in \cite{tri02}.
They will be very adequate when considering (non-smooth) atomic decompositions of spaces defined on Lipschitz domains (or on the 
boundary of a Lipschitz domain, respectively).
\begin{definition}\label{Lip-atom}
\begin{enumerate}
\item[(i)] The space of Lipschitz functions $\Lip(\R^n)$ is defined as the collection of all real-valued functions $f:\R^n\to\R$ such that
$$
\|f|\Lip(\R^n)\|=\max\left\{\sup_{x}|f(x)|, \quad \sup_{x\not =y}\frac{|f(x)-f(y)|}{|x-y|}\right\}<\infty.
$$
\item[(ii)]
We say that $a\in \Lip(\rn)$ is a $\Lip$-atom, if for some $j\in\N_0$
\beq\label{lip-atom-1}
\supp a\subset d Q_{j,m}, \quad m\in\zn,\; d>1,
\eeq
and
\beq\label{lip-atom-2}
|a(x)|\leq 1, \qquad |a(x)-a(y)|\leq 2^{j}|x-y|. 
\eeq
\end{enumerate}
\end{definition}

\remark{
One might  use alternatively in \eqref{lip-atom-2} that 
\begin{equation}\label{eq:dil2}
\|a(2^{-j}\cdot)|\Lip(\rn)\|\leq 1.
\end{equation}
}

We use the abbreviation 
$$ 
\mathbf{B}^s_p(\rn)=\mathbf{B}^s_{p,p}(\rn)\quad \text{with} \quad 0<p\leq\infty, \quad s>0.\ 
$$
In particular, in view of \eqref{B=Zyg},
\begin{equation*}
{\mathcal C}^{s}(\rn)=\mathbf{B}^s_{\infty}(\rn), \quad s>0,
\end{equation*}
are the H\"older-Zygmund spaces. 

\begin{definition}\label{def-atoms-ns2}
Let $0<p\leq\infty$,  $\sigma>0$ and $d>1$. Then  $a\in \mathbf{B}^{\sigma}_{p}(\rn)$ is called a $(\sigma,p)$-atom if 
for some $j\in\nat_0$
\beq\label{atom-ns-01}
\supp a\subset dQ_{j,m}\qquad \text{for some } m\in\zn, 
\eeq
and 
\beq\label{atom-ns-02}
\|a(2^{-j}\cdot)|\mathbf{B}^{\sigma}_p(\rn)\|\leq 1.
\eeq
\end{definition}

\remark{Note that if $\sigma<\frac np$ then $(\sigma,p)$-atoms might be unbounded.  Roughly speaking, they arise by dilating $\mathbf{B}^{\sigma}_p$-normalized functions. 
Obviously, the condition \eqref{atom-ns-02} is a straightforward modification of \eqref{eq:dil1} and \eqref{eq:dil2}.\\

In general, it is convenient to write $a_{j,m}(x)$ instead of $a(x)$ if the atoms are located at $Q_{j,m}$ according 
to \eqref{lip-atom-1} and \eqref{atom-ns-01}, respectively. Furthermore, $\sigma$ denotes the 'non-smoothness' of the 
atom, cf.~\eqref{smooth}.  \\

The non-smooth atoms we consider in Definition \ref{def-atoms-ns2}, are  renormalized versions 
of the non-smooth $(s,p)^{\sigma}$-atoms
considered in \cite{tri03} and \cite{triw96}, where \eqref{atom-ns-02} is replaced by
\[
a\in B_p^{\sigma}(\rn)\qquad \text{with}\qquad \|a(2^{-j}\cdot)|B_p^{\sigma}(\rn)\|\leq 2^{j(\sigma-s)},
\]
resulting in corresponding changes concerning the definition of the sequence spaces $b^s_{p,q}$ used for the atomic decomposition.\\
However, the function spaces we consider are different from the ones considered there. Furthermore, for our purposes 
(studying traces later on) it is convenient to shift the factors $2^{j(s-\frac np)}$ to the sequence spaces. 
}

We wish to compare these atoms with the smooth atoms in Definition \ref{def-atoms}. 

\begin{proposition}\label{prop-atom-ns}
Let $0<p\leq \infty$ and $0<\sigma<K$. Furthermore, let $d>1$, $j\in\nat_0$, and $m\in\zn$.
Then any $K$-atom $a_{j,m}$ is a $(\sigma,p)$-atom.
\end{proposition}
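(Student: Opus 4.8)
The plan is to show that a $K$-atom $a_{j,m}$ satisfies the normalization \eqref{atom-ns-02} required of a $(\sigma,p)$-atom, since the support condition \eqref{supp_1} is literally the same as \eqref{atom-ns-01}. Set $b(x):=a_{j,m}(2^{-j}x)$. By \eqref{eq:dil1}, the function $b$ satisfies $\|b\,|\,C^K(\rn)\|\le 1$, and moreover $\supp b\subset 2^j\,d\,Q_{j,m}=dQ_{0,m}$, so $b$ is supported in a fixed ball $B(0,R)$ whose radius depends only on $d$ and $n$ (not on $j$ or $m$). Thus everything reduces to the elementary embedding-type estimate
\[
\|b\,|\,\mathbf{B}^\sigma_p(\rn)\|\lesssim \|b\,|\,C^K(\rn)\|
\]
valid for all $b$ supported in a fixed bounded set, together with checking that the implied constant can be taken to be $1$ after rescaling.

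First I would recall that for functions supported in a fixed ball one has $\|b\,|\,L_p(\rn)\|\lesssim \|b\,|\,C^0(\rn)\|$ with a constant depending only on $R$ and $p$, and that the modulus of smoothness obeys $\omega_r(b,t)_p\lesssim t^{r}\|b\,|\,C^r(\rn)\|$ whenever $r\le K$, again with a constant depending only on $R$ and $p$ (write the $r$-th difference as an $r$-fold integral of the $r$-th derivative along the increment direction and take $L_p$-norms). Choosing $r=[\sigma]+1\le K$ and plugging this into the classical norm \eqref{class-B-dom} on $\Omega=\rn$ — which is equivalent to the atomic norm by Remark \ref{extr_char} and the discussion following Definition \ref{def-dom} — the integral $\int_0^1 t^{-\sigma q}\omega_r(b,t)_p^q\,\frac{\ud t}{t}$ converges because $r>\sigma$, and one obtains $\|b\,|\,\mathbf{B}^\sigma_p(\rn)\|\le c\,\|b\,|\,C^K(\rn)\|\le c$ with $c=c(n,p,\sigma,d,r)$ independent of $j,m$.

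This gives $\|a_{j,m}(2^{-j}\cdot)\,|\,\mathbf{B}^\sigma_p(\rn)\|\le c$ rather than $\le 1$, so the last point is to absorb the constant $c$. Since the definition of $(\sigma,p)$-atom, like that of $K$-atom, is only used up to such multiplicative constants (the spaces are independent of $d$ and $K$, and rescaling an atom by a fixed constant only rescales the coefficient sequence $\lambda$ in \eqref{repr_01'}, which does not affect membership in $b^s_{p,q}$), the statement should be read — as is standard in this circle of ideas — modulo a fixed constant; alternatively one replaces $a_{j,m}$ by $c^{-1}a_{j,m}$, which is genuinely a $(\sigma,p)$-atom in the strict sense of Definition \ref{def-atoms-ns2}, and the factor $c^{-1}$ is harmless. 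I would state this normalization remark explicitly.

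The main obstacle is not conceptual but bookkeeping: one must make sure that every constant appearing — in the $L_p$-bound on a compactly supported continuous function, in the estimate of $\omega_r$ by $C^r$-norms, and in the equivalence of the classical and atomic quasi-norms on $\rn$ — is genuinely independent of the scale $j$ and the position $m$. This is exactly what the rescaling $b(x)=a_{j,m}(2^{-j}x)$ buys us: after rescaling, $b$ lives on a $j$-independent support and the bound $\|b\,|\,C^K\|\le 1$ is $j$-independent, so all the auxiliary constants depend only on $n,p,\sigma,d$ and the fixed integer $r=[\sigma]+1$. Once this is observed, the argument is routine; the only subtlety worth flagging in the write-up is the harmless constant discussed above.
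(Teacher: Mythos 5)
Your argument is correct and follows the same route as the paper: rescale by $2^{j}$ so that the support becomes uniformly bounded, and then invoke the embedding of $C^{K}$ (on a fixed compact support) into $\mathbf{B}^{\sigma}_{p}(\rn)$ for $\sigma<K$, with constants independent of $j$ and $m$. Your explicit treatment of the modulus-of-smoothness estimate and of the harmless normalization constant merely fills in details that the paper's one-line proof leaves implicit.
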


\proofstart
Since the functions $a_{j,m}(2^{-j}\cdot)$ have compact support, we obtain
\[
\|a_{j,m}(2^{-j}\cdot)|\mathbf{B}^{\sigma}_{p}(\rn)\|\lesssim \|a_{j,m}(2^{-j}\cdot)|C^{K}(\rn)\|\leq 1,
\]
with constants independent of $j$, giving the desired result for  non-smooth atoms from Definition \ref{def-atoms-ns2}.
\proofend

The use of atoms with limited smoothness (i.e. finite element functions or splines) 
was studied already in \cite{Os94}, where the author deals with spline approximation (and traces) in Besov spaces.\\

The following theorem contains the main result of this section. It gives the counterpart of Definition \ref{intr_char} 
and provides a non-smooth atomic decomposition of the spaces $\Bd(\rn)$.

\begin{theorem}\label{th:atom-dec}
 Let $0<p,q\leq\infty$, $0<s<\sigma$, and  $d>1$.  
Then $f\in L_p(\rn)$ belongs to $\Bd(\rn)$ if, and only if, it can be represented as
\beq\label{repr_01}
f=\sum_{j=0}^\infty\sum_{m\in\zn}\lambda_{j,m}a_{j,m},
\eeq 
where the $a_{j,m}$ are $(\sigma,p)$-atoms $(j\in\mathbb{N}_0)$ with
$\supp a_{j,m}\subset dQ_{j,m}$,  $j\in\mathbb{N}_0$, $m\in\zn,$
and $\lambda\in b^s_{p,q}$, convergence being in $L_p(\rn)$. Furthermore,
\beq\label{norm-inf1'}
\|f|\Bd(\rn)\|=\inf\|\lambda|b^s_{p,q}\|,
\eeq
where the infimum is taken over all admissible representations \eqref{repr_01}.
\end{theorem}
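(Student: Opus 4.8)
The plan is to establish the two inclusions separately, exploiting the already-available smooth atomic decomposition from Definition~\ref{intr_char} together with the homogeneity estimate of Theorem~\ref{hom-B} and the elementary embedding $\mathbf{B}^{s+\varepsilon}_{p,u}(\rn)\hookrightarrow\mathbf{B}^s_{p,q}(\rn)$ recalled in Remark~\ref{extr_char}.

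First, the easy direction. If $f$ is given by a smooth atomic decomposition as in Definition~\ref{intr_char} with $K$-atoms and $K\geq 1+[s]$, then, choosing $K$ additionally larger than $\sigma$, Proposition~\ref{prop-atom-ns} tells us that every $K$-atom is already a $(\sigma,p)$-atom. Hence the very same decomposition \eqref{repr_01'} serves as a representation \eqref{repr_01}, and taking the infimum over all such representations gives $\inf\|\lambda|b^s_{p,q}\|\leq\|f|\Bd(\rn)\|$. (Here one should note that, since $\sigma$ is fixed but arbitrary with $\sigma>s$, one is free to pick $K$ as large as needed; the resulting space does not depend on $K$ by Remark~\ref{extr_char}.) In particular every $f\in\Bd(\rn)$ admits a non-smooth atomic representation with controlled coefficients, and this yields one of the two norm inequalities.

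Second, and this is where the real work lies: one must show that if $f=\sum_{j,m}\lambda_{j,m}a_{j,m}$ with $(\sigma,p)$-atoms and $\lambda\in b^s_{p,q}$, then $f\in\Bd(\rn)$ with $\|f|\Bd(\rn)\|\lesssim\|\lambda|b^s_{p,q}\|$. The idea is to treat each single atom $a_{j,m}$, which by \eqref{atom-ns-02} satisfies $\|a_{j,m}(2^{-j}\cdot)|\mathbf{B}^\sigma_p(\rn)\|\leq 1$ and is supported in $dQ_{j,m}$, as an element of $\mathbf{B}^\sigma_{p}(\rn)=\mathbf{B}^\sigma_{p,p}(\rn)$ whose norm we can compute by the homogeneity estimate: after a harmless localization (the support has diameter $\sim 2^{-j}$) Theorem~\ref{hom-B} gives $\|a_{j,m}|\mathbf{B}^\sigma_p(\rn)\|\sim 2^{-j(\sigma-n/p)}\|a_{j,m}(2^{-j}\cdot)|\mathbf{B}^\sigma_p(\rn)\|\lesssim 2^{-j(\sigma-n/p)}$. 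Now I would re-expand each $a_{j,m}$ itself into a \emph{smooth} atomic decomposition at scales $\geq j$; this is the subatomic/iteration step. Concretely, writing the smooth atomic decomposition of $a_{j,m}\in\mathbf{B}^\sigma_{p}(\rn)$ and collecting all the resulting smooth sub-atoms by their dyadic level, one has to sum the contributions of level-$k$ sub-atoms coming from all the $a_{j,m}$ with $j\leq k$, check that they can be grouped into genuine $K$-smooth atoms at level $k$ (up to a fixed dilation constant absorbing the overlap of the $dQ_{j,m}$), and verify that the new coefficient sequence lies in $b^s_{p,q}$ with norm $\lesssim\|\lambda|b^s_{p,q}\|$. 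The gain $\sigma>s$ is exactly what makes the geometric series $\sum_{j\leq k}2^{-(k-j)(\sigma-s)\cdot(\text{something positive})}$ converge; this is the classical ``summing up'' trick and has to be done with the correct $\ell_p$-$\ell_q$ bookkeeping, with the usual care for $p,q<1$ where only $p$- and $q$-triangle inequalities are available. Finally one checks convergence of the double series in $L_p(\rn)$, which again follows from the embedding $\Bd(\rn)\hookrightarrow L_p(\rn)$ (valid since $s>0$) applied to the partial sums, so that \eqref{repr_01} indeed represents an $L_p$-function and the identification of norms \eqref{norm-inf1'} is complete.

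\textbf{Main obstacle.}
The delicate point is the iteration/re-decomposition step: one must pass from ``each atom is small in a Besov norm of higher smoothness $\sigma$'' to ``the whole sum is controlled in $\Bd$'' by re-atomizing and reorganizing infinitely many smooth sub-atoms across scales, keeping the supports under control (only finitely many $dQ_{j,m}$ at each level $j$ overlap a given point, with a constant depending on $d$ and $n$) and summing the mixed $\ell_p(\ell_q)$ norms correctly when $0<p,q\leq\infty$, including the quasi-Banach range. The strict inequality $s<\sigma$ is used precisely here to produce a convergent geometric factor; without it the rearranged series need not be summable. Everything else — the embedding facts, the homogeneity rescaling of a single atom, and the $L_p$-convergence — is routine given the results quoted in Section~1.
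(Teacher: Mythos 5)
Your proposal follows essentially the same route as the paper: the easy inclusion via Proposition~\ref{prop-atom-ns}, and for the converse the re-expansion of each $(\sigma,p)$-atom $a_{j,m}(2^{-j}\cdot)$ into an optimal smooth atomic decomposition in $\mathbf{B}^{\sigma}_{p}(\rn)$, rescaling the resulting smooth atoms to levels $k\geq j$, regrouping by level using the bounded overlap of the supports, and exploiting the gap $\sigma-s>0$ to sum the geometric factor --- the paper carries out exactly the $\ell_p$--$\ell_q$ bookkeeping you defer, splitting into the cases $q/p\leq 1$ and $q/p>1$, the latter via a discrete Hardy-type inequality. One small slip in your aside: homogeneity gives $\|a_{j,m}|\mathbf{B}^{\sigma}_{p}(\rn)\|\lesssim 2^{j(\sigma-n/p)}$ rather than $2^{-j(\sigma-n/p)}$, but this estimate plays no role in the actual argument, which works with $a_{j,m}(2^{-j}\cdot)$ directly.
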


\proofstart
We have the atomic decomposition based on smooth $K$-atoms according to Definition \ref{intr_char}. 
By Proposition \ref{prop-atom-ns} classical 
$K$-atoms are special $(\sigma,p)$-atoms. Hence, it is enough to prove that 
\begin{equation}\label{eq:todo1}
\|f|\mathbf{B}^s_{p,q}(\R^n)\|\lesssim \left(\sum_{k=0}^\infty2^{k(s-\frac np)q}\left(\sum_{l\in\Z^n}|\lambda_{k,l}|^p\right)^{q/p}\right)^{1/q}
\end{equation}
for any atomic decomposition
\beq\label{h-0}
f=\sum_{k=0}^\infty \sum_{l\in\Z^n}\lambda_{k,l}a^{k,l},
\eeq
where $a^{k,l}$ are $(\sigma,p)$-atoms according to Definition \ref{def-atoms-ns2}.   

For this purpose we expand each function $a^{k,l}(2^{-k}\cdot)$ optimally in $\mathbf{B}^{\sigma}_p(\R^n)$ with respect to classical 
$K$-atoms $b^{j,w}_{k,l}$ where $\sigma<K$, 
\beq\label{h-3}
a^{k,l}(2^{-k}x)=\sum_{j=0}^{\infty}\sum_{w\in\zn}\eta_{j,w}^{k,l}b_{k,l}^{j,w}(x), \quad x\in\rn,
\eeq
with
\beq\label{h-1}
\supp b_{k,l}^{j,w}\subset Q_{j,w}, \qquad \left|\uD^{\alpha}b_{k,l}^{j,w}(x)\right|\leq 2^{|\alpha|j}, \quad |\alpha|\leq K,
\eeq
and 
\beq\label{eq:have2}
\left(\sum_{j=0}^\infty 2^{j(\sigma-\frac np)p}\sum_{w\in\Z^n}|\eta^{k,l}_{j,w}|^p\right)^{\frac 1p}=
\|\eta^{k,l}|b^{\sigma}_{p,p}\|\sim \|a^{k,l}(2^{-k}\cdot)|\mathbf{B}^{\sigma}_p(\rn)\|\lesssim 1.
\eeq
Hence,
$$
a^{k,l}(x)=\sum_{j=0}^\infty \sum_{w\in\Z^n}\eta^{k,l}_{j,w}b^{j,w}_{k,l}(2^kx),
$$
where the functions $b^{j,w}_{k,l}(2^k\cdot)$ are supported by cubes with side lengths $\sim 2^{-k-j}$. By \eqref{h-1} we have
\[\left|\uD^{\alpha}b^{j,w}_{k,l}(2^kx)\right|=2^{k|\alpha|}\left|(\uD^{\alpha}b^{j,w}_{k,l})(2^kx)\right|\leq 2^{(j+k)|\alpha|}.
\]
Replacing $j+k$ by $j$ and putting $d^{j,w}_{k,l}(x):=b^{j-k,w}_{k,l}(2^kx)$, we obtain that 
\beq\label{h-2}
a^{k,l}(x)=\sum_{j=k}^\infty \sum_{w\in\Z^n}\eta^{k,l}_{j-k,w}
d^{j,w}_{k,l}(x),
\eeq
where $d^{j,w}_{k,l}$ are classical $K$-atoms supported by cubes with side lengths $\sim 2^{-j}$. 
We insert \eqref{h-2} into the expansion \eqref{h-0}. We fix $j\in\nat_0$ and $w\in\zn$, and collect all non-vanishing terms $d_{k,l}^{j,w}$ in the expansions \eqref{h-2}. We have $k\leq j$. Furthermore, multiplying 
\eqref{h-3} if necessary with suitable cut-off functions it follows that there is a natural number $N$ such that for fixed $k$ only at most $N$ points $l\in\zn$ contribute to $d^{j,w}_{k,l}$. We denote this set by $(j,w,k)$. Hence its cardinality is at most $N$, where $N$ is independent of $j,w,k$. Then 
\[
d^{j,w}(x)=\frac{\sum_{k\leq j}\sum_{l\in(j,w,k)}\eta^{k,l}_{j-k,w}\cdot\lambda_{k,l}\cdot d^{j,w}_{k,l}(x)}{\sum_{k\leq j}\sum_{l\in(j,w,k)}|\eta^{k,l}_{j-k,w}|\cdot|\lambda_{k,l}|}
\]
are correctly normalized smooth $K$-atoms located in cubes with side lengths $\sim 2^{-j}$ and centered at $2^{-j}w$.
Let 
\begin{equation}\label{eq:have1}
\nu_{j,w}=\sum_{k\le j} \sum_{l\in(j,w,k)}|\eta^{k,l}_{j-k,w}|\cdot|\lambda_{k,l}|.
\end{equation}
Then we obtain a classical atomic decomposition in the sense of Definition \ref{intr_char}
\[
f=\sum_{j}\sum_w \nu_{j,w}d^{j,w}(x),
\]
where $d^{j,w}$ are $K$-atoms and 
$$
\|f|\mathbf{B}^s_{p,q}(\R^n)\|\lesssim \|\nu|b^s_{p,q}\|.
$$
Therefore, in order to prove \eqref{eq:todo1}, it is enough to show, that
\begin{equation}\label{eq:todo2}
\|\nu|b^s_{p,q}\|\lesssim \|\lambda|b^s_{p,q}\|
\end{equation}
if \eqref{eq:have2} holds.

Let $0<\eps<\sigma-s$. Then we obtain by \eqref{eq:have1} that (assuming $p<\infty$) 
\begin{equation}\label{eq:have3}
|\nu_{j,w}|^p\lesssim \sum_{k\le j}\sum_{l\in(j,w,k)}2^{(j-k)p\varepsilon}
|\eta_{j-k,w}^{k,l}|^p|\lambda_{k,l}|^p,
\end{equation}
where we used the bounded cardinality of the sets $(j,w,k)$.

This gives for $q/p\le 1$
{\allowdisplaybreaks
\begin{align*}
\|\nu|b^s_{p,q}\|^q&= \sum_{j=0}^\infty2^{j(s-n/p)q}\left(\sum_{w\in\Z^n} |\nu_{j,w}|^p\right)^{q/p}\\
&\lesssim \sum_{j=0}^\infty2^{j(s-n/p)q}\left(\sum_{w\in\Z^n}
\sum_{k=0}^j \sum_{l\in(j,w,k)}2^{(j-k)p\varepsilon}|\eta_{j-k,w}^{k,l}|^p|\lambda_{k,l}|^p
\right)^{q/p}\\
&\le\sum_{j=0}^\infty 2^{j(s-n/p)q}\sum_{k=0}^j\left(\sum_{w\in\Z^n}
 \sum_{l\in(j,w,k)}2^{(j-k)p\varepsilon} |\eta_{j-k,w}^{k,l}|^p|\lambda_{k,l}|^p
\right)^{q/p}\\
&=\sum_{k=0}^\infty \sum_{j=k}^\infty 2^{j(s-n/p)q}\left(\sum_{w\in\Z^n}
 \sum_{l\in(j,w,k)}2^{(j-k)p\varepsilon}|\eta_{j-k,w}^{k,l}|^p|\lambda_{k,l}|^p
\right)^{q/p}\\
&=\sum_{k=0}^\infty \sum_{j=0}^\infty 2^{(j+k)(s-n/p)q}\left(\sum_{w\in\Z^n}
\sum_{l\in(j+k,w,k)}2^{jp\varepsilon}|\eta_{j,w}^{k,l}|^p|\lambda_{k,l}|^p
\right)^{q/p}\\
&=\sum_{k=0}^\infty 2^{k(s-n/p)q} \sum_{j=0}^\infty 2^{j(s-\sigma+\varepsilon)q}\left(\sum_{w\in\Z^n}
\sum_{l\in(j+k,w,k)}2^{j(\sigma-n/p)p}|\eta_{j,w}^{k,l}|^p|\lambda_{k,l}|^p
\right)^{q/p}\\
&\lesssim\sum_{k=0}^\infty 2^{k(s-n/p)q} \left(\sum_{j=0}^\infty \sum_{w\in\Z^n} \sum_{l\in(j+k,w,k)}
2^{j(\sigma-n/p)p}|\eta_{j,w}^{k,l}|^p|\lambda_{k,l}|^p
\right)^{q/p}\\
&\le\sum_{k=0}^\infty 2^{k(s-n/p)q} \left(\sum_{j=0}^\infty \sum_{w\in\Z^n} \sum_{l\in \Z^n}
2^{j(\sigma-n/p)p}|\eta_{j,w}^{k,l}|^p|\lambda_{k,l}|^p
\right)^{q/p}\\
&=\sum_{k=0}^\infty 2^{k(s-n/p)q} \left(\sum_{l\in\Z^n} |\lambda_{k,l}|^p \sum_{j=0}^\infty \sum_{w\in\Z^n}
2^{j(\sigma-n/p)p}|\eta_{j,w}^{k,l}|^p
\right)^{q/p}\\
&\lesssim\sum_{k=0}^\infty 2^{k(s-n/p)q} \left(\sum_{l\in\Z^n} |\lambda_{k,l}|^p\right)^{q/p}=\|\lambda|b^s_{p,q}\|^q.
\end{align*}}
We have used \eqref{eq:have2} in the last inequality.

If $q/p>1$, we shall use the following inequality, which holds for every non-negative sequence 
$\{\gamma_{j,k}\}_{0\le k\le j<\infty}$, every $\alpha\ge 1$ and every $\varepsilon>0$.
\begin{equation}\label{eq:todo3}
\sum_{j=0}^\infty \left(\sum_{k=0}^j2^{-(j-k)\varepsilon}\gamma_{j,k}\right)^\alpha
\le c_{\alpha,\varepsilon} \sum_{k=0}^\infty \left(\sum_{j=k}^\infty \gamma_{j,k}\right)^\alpha.
\end{equation}
If $\alpha=\infty$, \eqref{eq:todo3} has to be modified appropriately.
To prove \eqref{eq:todo3} for $\alpha<\infty$, we use H\"older's inequality and the embedding $\ell_1\hookrightarrow \ell_\alpha$
\begin{align*}
\sum_{j=0}^\infty \left(\sum_{k=0}^j2^{-(j-k)\varepsilon}\gamma_{j,k}\right)^\alpha
&\le \sum_{j=0}^\infty \left(\sum_{k=0}^j 2^{-(j-k)\varepsilon\alpha'}\right)^{\alpha/\alpha'}
\left(\sum_{k=0}^j \gamma_{j,k}^\alpha\right)^{\alpha/\alpha}\\
&\lesssim \sum_{j=0}^\infty \sum_{k=0}^j \gamma_{j,k}^\alpha = \sum_{k=0}^\infty \sum_{j=k}^\infty \gamma_{j,k}^\alpha
\le \sum_{k=0}^\infty \left(\sum_{j=k}^\infty\gamma_{j,k}\right)^\alpha.
\end{align*}
We use \eqref{eq:have3} and \eqref{eq:todo3} with $p(\sigma-s-\varepsilon)$ instead of $\varepsilon$ and $\alpha=q/p> 1$,
{\allowdisplaybreaks
\begin{align*}
\|\nu|b^s_{p,q}\|^q&\lesssim \sum_{j=0}^\infty2^{j(\sigma-\frac np)q}\left(\sum_{w\in\Z^n}
\sum_{k=0}^j \sum_{l\in(j,w,k)}2^{(j-k)p\varepsilon}
|\eta_{j-k,w}^{k,l}|^p|\lambda_{k,l}|^p
\right)^{q/p}\\
&=\sum_{j=0}^\infty \left(\sum_{k=0}^j2^{-(j-k)p(\sigma-s-\varepsilon)}
\sum_{w\in\Z^n} \sum_{l\in(j,w,k)} 2^{k(s-n/p)p}2^{(j-k)(\sigma-\frac np)p}|\eta_{j-k,w}^{k,l}|^p|\lambda_{k,l}|^p\right)^{q/p}\\
&\lesssim\sum_{k=0}^\infty \left(\sum_{j=k}^\infty 
\sum_{w\in\Z^n} \sum_{l\in(j,w,k)} 2^{k(s-n/p)p}2^{(j-k)(\sigma-\frac np)p}|\eta_{j-k,w}^{k,l}|^p|\lambda_{k,l}|^p\right)^{q/p}\\
&=\sum_{k=0}^\infty 2^{k(s-n/p)q} \left(\sum_{j=0}^\infty 
\sum_{w\in\Z^n} \sum_{l\in(j+k,w,k)} 2^{j(\sigma-\frac np)p}|\eta_{j,w}^{k,l}|^p|\lambda_{k,l}|^p\right)^{q/p}\\
&=\sum_{k=0}^\infty 2^{k(s-n/p)q} \left(\sum_{l\in\Z^n} \sum_{j=0}^\infty 
\sum_{w\in\Z^n:l\in(j+k,w,k)} 2^{j(\sigma-\frac np)p}|\eta_{j,w}^{k,l}|^p|\lambda_{k,l}|^p\right)^{q/p}\\
&\lesssim\sum_{k=0}^\infty 2^{k(s-n/p)q} \left(\sum_{l\in\Z^n} |\lambda_{k,l}|^p\sum_{j=0}^\infty 
\sum_{w\in\Z^n} 2^{j(\sigma-\frac np)p}|\eta_{j,w}^{k,l}|^p\right)^{q/p}\\
&\le\sum_{k=0}^\infty 2^{k(s-n/p)q}\left(\sum_{l\in\Z^n} |\lambda_{k,l}|^p \right)^{q/p}=\|\lambda|b^s_{p,q}\|^q.
\end{align*}}
The proof of \eqref{eq:todo2} is finished. We again used \eqref{eq:have2} in the last inequality.
If $p$ and/or $q$ are equal to infinity, 
only notational changes are necessary.
\proofend

\remark{
Our results generalize \cite[Th.~2]{tri03} and \cite[Th.~2.3]{triw96},  where non-smooth atomic decompositions for spaces 
$\mathbf{B}^s_{p,p}(\rn)$ with $s>\max\left(n(1/p-1),0\right)$ can be found, to $\Bd(\rn)$ with no restrictions on the 
parameters. In particular, the case when $p\neq q$ is completely new.
}

Using the $\Lip$-atoms from Definition \ref{Lip-atom} and the embedding 
\[
\Lip(\rn) \hookrightarrow B^1_{\infty}(\rn),
\]
cf. \cite[p.89/90]{T-F1}, as a Corollary we now obtain  the following non-smooth atomic decomposition for Besov spaces with smoothness $0<s<1$.

\begin{corollary}\label{th:atom-dec-3}
 Let $0<p,q\leq\infty$, $0<s<1$, and $d>1$.  
Then $f\in L_p(\rn)$ belongs to $\Bd(\rn)$ if, and only if, it can be represented as
\beq\label{repr_1b}
f=\sum_{j=0}^\infty\sum_{m\in\zn}\lambda_{j,m}a_{j,m},
\eeq
where the $a_{j,m}$ are $\Lip$-atoms $(j\in\mathbb{N}_0)$ with
$\supp a_{j,m}\subset dQ_{j,m}$,  $j\in\mathbb{N}_0$, $m\in\zn,$
and $\lambda\in b^s_{p,q}$, convergence being in $L_p(\rn)$. Furthermore,
\beq\label{norm-inf1b}
\|f|\Bd(\rn)\|=\inf\|\lambda|b^s_{p,q}\|,
\eeq
where the infimum is taken over all admissible representations \eqref{repr_1b}.
\end{corollary}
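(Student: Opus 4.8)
The plan is to derive Corollary \ref{th:atom-dec-3} directly from Theorem \ref{th:atom-dec} by reducing $\Lip$-atoms to $(\sigma,p)$-atoms for a suitable $\sigma$. First I would fix $s$ with $0<s<1$ and choose any $\sigma$ with $s<\sigma<1$; the goal is to show that every $\Lip$-atom is (a bounded multiple of) a $(\sigma,p)$-atom so that one direction of the decomposition follows from Theorem \ref{th:atom-dec}, while the converse direction is immediate since smooth $K$-atoms with $K\geq 1$ are in particular $\Lip$-atoms (after the harmless renormalization by a constant independent of $j$).

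The key step is the embedding chain. Given a $\Lip$-atom $a$ located at $Q_{j,m}$, the rescaled function $a(2^{-j}\cdot)$ satisfies $\|a(2^{-j}\cdot)|\Lip(\rn)\|\leq 1$ by \eqref{lip-atom-2} (equivalently \eqref{eq:dil2}), and it is supported in a fixed dilate of the unit cube. Using the embedding $\Lip(\rn)\hookrightarrow B^1_\infty(\rn)=\mathbf{B}^1_\infty(\rn)$ cited from \cite[p.89/90]{T-F1}, together with the embedding $\mathbf{B}^{1}_{\infty}(\rn)\hookrightarrow \mathbf{B}^{\sigma}_{\infty}(\rn)$ for $\sigma<1$ and the embedding $\mathbf{B}^{\sigma}_{\infty}(\rn)\hookrightarrow \mathbf{B}^{\sigma}_{p}(\rn)$ restricted to functions supported in a fixed bounded set (which holds since $L_\infty\hookrightarrow L_p$ locally, and can be obtained from the embedding results quoted in Remark \ref{extr_char} or by a direct modulus-of-smoothness estimate on a fixed cube), one gets
\[
\|a(2^{-j}\cdot)|\mathbf{B}^{\sigma}_{p}(\rn)\|\lesssim \|a(2^{-j}\cdot)|\Lip(\rn)\|\leq 1,
\]
with a constant depending only on $p,\sigma,d,n$ but not on $j,m$. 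Hence $c^{-1}a$ is a $(\sigma,p)$-atom for a fixed constant $c\geq 1$, and absorbing $c$ into the coefficients $\lambda_{j,m}$ changes the $b^s_{p,q}$-quasi-norm only by the factor $c$.

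With this in hand the corollary follows: if $f$ is represented as in \eqref{repr_1b} with $\Lip$-atoms, then rewriting $a_{j,m}=c\cdot(c^{-1}a_{j,m})$ gives a representation by $(\sigma,p)$-atoms with coefficients $c\lambda_{j,m}\in b^s_{p,q}$, so Theorem \ref{th:atom-dec} yields $f\in\Bd(\rn)$ with $\|f|\Bd(\rn)\|\lesssim \|\lambda|b^s_{p,q}\|$; taking the infimum gives one inequality. Conversely, Definition \ref{intr_char} (with $K=1$, admissible since $[s]=0$ for $0<s<1$) provides, for $f\in\Bd(\rn)$, a representation by smooth $1$-atoms, which after multiplication by the fixed constant needed to match normalization \eqref{lip-atom-2} are $\Lip$-atoms; this gives the reverse inequality and equality of the quasi-norms in \eqref{norm-inf1b}. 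The one point requiring a little care — the main obstacle — is justifying the local embedding $\mathbf{B}^{1}_{\infty}\hookrightarrow\mathbf{B}^{\sigma}_{p}$ uniformly for functions supported in a fixed cube with a constant independent of the atom; this is where one must be slightly careful to invoke the correct embedding (the spaces here are the difference-based spaces, so one should cite Remark \ref{extr_char} or argue via \eqref{class-B-dom} on a fixed bounded domain), but it is not deep.
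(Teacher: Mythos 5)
Your proposal follows exactly the route the paper intends (the paper gives no detailed proof, only the hint that $\Lip(\rn)\hookrightarrow B^1_\infty(\rn)$): reduce $\Lip$-atoms to $(\sigma,p)$-atoms for some $s<\sigma<1$ and invoke Theorem \ref{th:atom-dec}, with the converse direction coming from smooth $1$-atoms being $\Lip$-atoms. One caveat on the embedding chain: the middle-to-last link $\mathbf{B}^{\sigma}_{\infty}(\rn)\hookrightarrow \mathbf{B}^{\sigma}_{p}(\rn)$ for functions supported in a fixed cube is \emph{false} as stated, since lowering the fine index from $\infty$ to $p$ at the \emph{same} smoothness $\sigma$ produces a logarithmically divergent integral ($\omega_r(f,t)_\infty\lesssim t^\sigma$ only gives $\int_0^1 t^{-\sigma p}t^{\sigma p}\,\frac{\dint t}{t}=\infty$); the correct argument is the direct one you offer as a fallback, namely $\omega_1\bigl(a(2^{-j}\cdot),t\bigr)_p\lesssim \omega_1\bigl(a(2^{-j}\cdot),t\bigr)_\infty\le t$ on a fixed cube, so that $\int_0^1 t^{(1-\sigma)p}\,\frac{\dint t}{t}<\infty$ precisely because $\sigma<1$ strictly (equivalently, factor through $\mathbf{B}^{\sigma'}_{\infty}$ with $\sigma<\sigma'<1$). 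With that link repaired, the proof is complete and coincides with the paper's.
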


\section{Spaces on  Lipschitz domains and their boundaries}
\label{sect-3}
\label{sec-3}


We call a one-to-one mapping
$
\Phi: \; \rn\mapsto \rn,
$ 
a \textit{Lipschitz diffeomorphism},  if the components $\Phi_k(x)$ of $\Phi(x)=(\Phi_1(x), \dots, \Phi_n(x))$ are Lipschitz functions on $\rn$ and 
\[
|\Phi(x)-\Phi(y)|\sim |x-y|, \quad x,y\in \rn, \; |x-y|\leq 1,
\] 
where the equivalence constants are independent of $x$ and $y$. Of course the inverse of $\Phi^{-1}$ is also a  Lipschitz diffeomorphism on $\rn$.

\begin{definition}\label{def-lip-1}
Let $\Omega$ be a bounded domain in $\rn$. Then $\Omega$ is said to be a  Lipschitz domain, if there exist $N$ open balls $K_1,\dots, K_N$ such that
$\ \ds  
\bigcup_{j=1}^N K_j\supset \Gamma\; \text{and}\; K_j\cap\Gamma\neq\emptyset\quad \text{if}\quad j=1,\dots,N,
\ $
with the following property: for every ball $K_j$ there are Lipschitz diffeomorphisms $\psi^{(j)}$ such that \\[0.4cm]
\begin{minipage}{0.45\textwidth}
\[
\psi^{(j)}:K_j\longrightarrow V_j,\quad j=1,\dots,N,
\]
\vspace{0.1cm}

where $V_j:=\psi^{(j)}(K_j)$ and 
\[
\psi^{(j)}(K_j\cap\Omega)\subset \rn_+,\qquad \psi^{(j)}(K_j\cap\Gamma)\subset \mathbb{R}^{n-1}.
\]
\end{minipage}\hfill
\begin{minipage}{0.5\textwidth}
\begin{psfrags}
 	\psfrag{K}{$K_j$}
	\psfrag{O}{$\Omega$}
	\psfrag{G}{$\Gamma=\partial\Omega$}
	\psfrag{a}{$\psi^{(j)}$}
	\psfrag{b}{$\left(\psi^{(j)}\right)^{-1}$}
	\psfrag{y}{$y$}
	\psfrag{v}{$y'$}
	\psfrag{V}{$V_j$}
	\psfrag{R}{$\mathbb{R}^{n-1}$}
	\psfrag{t}{$\psi^{(j)}(\Omega\cap K_j)$}
	\psfrag{s}{$\psi^{(j)}(\Gamma\cap K_j)$}
 	{\includegraphics[width=7.5cm]{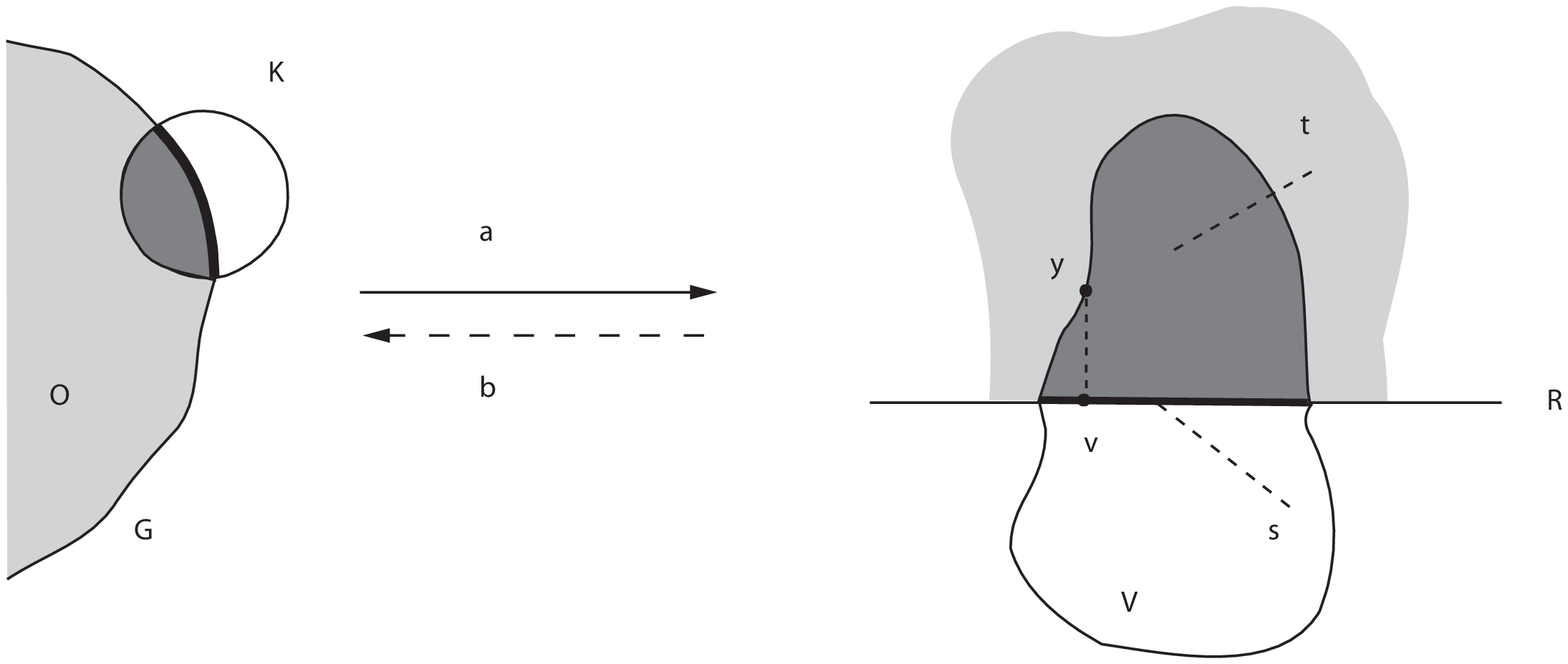}}
\end{psfrags}
\end{minipage}\\

\bigskip


\end{definition}

\remark{\label{def-lip-2}
The maps $\psi^{(j)}$ can be extended outside $K_j$ in such a way that the extended vector functions (denoted by $\psi^{(j)}$ as well) yield diffeomorphic mappings from $\rn$ onto itself (Lipschitz diffeomorphisms).\\
There are several equivalent definitions of Lipschitz domains in the literature. Our approach follows \cite{Dac04}. Another version as can be found in \cite{stein},  which defines first a
 {\em special $($unbounded$)$ Lipschitz domain} $\Omega$ in $\rn$ as simply the domain above the graph of a Lipschitz function $h: \mathbb{R}^{n-1}\longrightarrow \real$, i.e., 
\[
\Omega=\{(x',x_n): h(x')<x_n\}.
\]
Then a {\em bounded Lipschitz domain}  $\Omega$ in $\rn$ is defined as a bounded domain where the boundary $\Gamma=\partial\Omega$ can be covered by finitely many open balls $B_j$
 in $\rn$ with $j=1,\dots , J$, centered at $\Gamma$ such that 
 \[
B_j\cap \Omega=B_j\cap \Omega_j\qquad \text{for }j=1,\dots,J, 
 \]
 where $\Omega_j$ are rotations of suitable special Lipschitz domains in $\rn$. \\
 We shall occasionally use this alternative definition, in particular, since it usually suffices to consider special Lipschitz domains in our proofs (the related covering  involves only  finitely many balls), simplifying the notation considerably. \\
Consider a covering 
$\ 
\Omega\subset K_0\cup\left(\bigcup_{j=1}^N K_j\right),
\ $
where $K_0$ is an inner domain with $\overline{K}_0\subset\Omega$. Let $\{\varphi_j\}_{j=0}^N$ be a related {\em resolution of unity} of $\overline{\Omega}$, i.e., $\varphi_j$ are smooth nonnegative functions with support in $K_j$ additionally satisfying 
\beq\label{rou-2}
\sum_{j=0}^N \varphi_j(x)=1\quad \text{if }x\in\overline{\Omega}. 
\eeq
Obviously, the restriction of $\varphi_j$ to $\Gamma$ is a resolution of unity with respect to $\Gamma$. 
}

\bigskip

\subsection{Atomic decompositions for Besov spaces on boundaries}

The boundary $\partial\Omega=\Gamma$ of a bounded Lipschitz domain $\Omega$ will be furnished in the usual way with a surface measure $\ud \sigma$. The corresponding complex-valued Lebesgue spaces $L_p(\Gamma)$, $0<p\leq\infty$, are normed by
\[
\|g|L_p(\Gamma)\|=\left(\int_{\Gamma} |g(\gamma)|^p\ud \sigma(\gamma)\right)^{1/p}
\]
(with obvious modifications if $p=\infty$). We require the introduction of Besov spaces on $\Gamma$. We rely 
on the resolution of unity according to \eqref{rou-2} and the local Lipschitz diffeomorphisms  $\psi^{(j)}$ 
mapping $\Gamma_j=\Gamma\cap K_j$ onto $W_j=\psi^{(j)}(\Gamma_j)$, recall Definition \ref{def-lip-1}. We define
\[
g_j(y):=(\varphi_j f)\circ(\psi^{(j)})^{-1}(y),\qquad j=1,\dots,N,
\]
which restricted to $y=(y',0)\in W_j$,
\[
g_j(y')=(\varphi_j f)\circ(\psi^{(j)})^{-1}(y'),\qquad j=1,\dots,N, \quad f\in L_p(\Gamma),
\]
makes sense. This results in functions $g_j\in L_p(W_j)$ with compact supports in the $(n-1)$-dimensional Lipschitz domain $W_j$. 
We do not distinguish notationally between $g_j$ and $(\psi^{(j)})^{-1}$ as functions of $(y',0)$ and of $y'$. \\


Our constructions enable us to transport  Besov spaces naturally from $\real^{n-1}$ to the  boundary $\Gamma$ of a (bounded) 
Lipschitz domain via pull-back and a partition of unity.\\

\begin{definition}\label{def-Bq-bd}
 Let $n\geq 2$, and let $\Omega$ be a bounded Lipschitz domain in $\rn$ with boundary $\Gamma$, and $\varphi_j$, $\psi^{(j)}$, $W_j$ be as above. Assume $0<s<1$ and $0<p,q\leq\infty$. Then we introduce
\[
\Bd(\Gamma)=\{f\in L_p(\Gamma):g_j\in \Bd(W_j), \; j=1,\dots,N\},
\]
equipped with the quasi-norm
$\ \ds 
\|f|\Bd(\Gamma)\|:=\sum_{j=1}^N\|g_j|\Bd(W_j)\|.
\ $
\end{definition}

\remark{\label{rem-bd-boundary} The spaces $\Bd(\Gamma)$ turn out to be independent of the particular choice of the resolution of unity $\{\varphi_j\}_{j=1}^N$ and the local diffeomorphisms $\psi^{(j)}$ (the proof is similar to the proof of \cite[Prop.~3.2.3(ii)]{T-F1}, making use of Propositions \ref{prop-mult-diffeo} and \ref{equ-norm-dom} below). 
We furnish $\Bd(W_j)$ with the intrinsic $(n-1)$-dimensional norms according to Definition \ref{intr_char}. 
Note that we could furthermore replace $W_j$ in the definition of the norm above by $\mathbb{R}^{n-1}$ if we extend 
$g_j$ outside $W_j$ with zero, i.e., 
\beq\label{Equiv}
\|f|\Bd(\Gamma)\|\sim\sum_{j=1}^N\|g_j|\Bd(\mathbb{R}^{n-1})\|.
\eeq
In particular,  the equivalence \eqref{Equiv} yields that  characterizations for B-spaces defined on $\real^{n-1}$ can be generalized to B-spaces defined on $\Gamma$. 
This will be done in Theorem \ref{th:at-bd-1} for non-smooth atomic decompositions and is very likely to work as well for characterizations in terms of differences. 
}

\paragraph{Atomic decompositions for $\mathbf{B}^s_{p,q}(\Gamma)$}

Similar to the non-smooth atomic decompositions constructed in Section \ref{sect-2} we now establish corresponding atomic decompositions for Besov spaces defined on Lipschitz boundaries. They will be very useful when investigating traces on Lipschitz domains in Section \ref{sect-3}\\

The relevant sequence spaces and Lipschitz-atoms on the boundary $\Gamma$ we shall define next are closely related to the sequence spaces $b^s_{p,q}(\Omega)$ and $\Lip$-atoms used for the non-smooth atomic decompositions as used in  Corollary \ref{th:atom-dec-3}. 


\begin{definition}\label{def-seq-gamma}
Let $0<p,q\leq\infty$, $s\in\real$. Furthermore, let $\Gamma$ be the boundary of a bounded Lipschitz domain $\Omega\subset\rn$,  and $\lambda=\{\lambda_{j,m}\in\mathbb{C}:j\in\mathbb{N}_0, m\in\zn\}$. 
Then\index{sequence spaces!of type $b^s_{p,q}(\Gamma)$}
\[
b^s_{p,q}(\Gamma)=\left\{\lambda: \|\lambda|b^s_{p,q}(\Gamma)\|=\left(\sum_{j=0}^{\infty}2^{j(s-\frac{n-1}{p})q}\left({\sum_{m\in\zn}}^{\Gamma,j}|\lambda_{j,m}|^p\right)^{q/p}\right)^{1/q}<\infty\right\}
\]
$($with the usual modification if $p=\infty$ and/or $q=\infty$$)$.  
\end{definition}

\begin{definition}\label{bd-lip-atom}
Let $j\in \nat_0$, $m\in\zn$, $d>1$, and let $\Gamma$ be the boundary of a bounded Lipschitz domain $\Omega\subset \rn$. Put $Q_{j,m}^{\Gamma}:=dQ_{j,m}\cap \Gamma\neq \emptyset$. A function $a\in\Lip(\Gamma)$ is a $\Lip^{\Gamma}$-atom, if 
\[
\supp a\subset  Q_{j,m}^{\Gamma},  \qquad d>1,
\]
\beq\label{lip-gamma-cond}
\|a|L_{\infty}(\Gamma)\|\leq 1 \quad \text{and }\quad \sup_{x,y\in\Gamma,\atop x\neq y}\frac{|a(x)-a(y)|}{|x-y|}\leq 2^j.
\eeq
\end{definition}

\remark{
Note that if we put $2^j\Gamma:=\{2^jx:x\in\Gamma\}$, we can state \eqref{lip-gamma-cond} like  $\|a(2^{-j}\cdot)|\Lip(2^{j}\Gamma)\|\le 1.$ 
}

The theorem below provides atomic decompositions for the spaces $\mathbf{B}^s_{p,q}(\Gamma)$. \\

\begin{theorem}\label{th:at-bd-1}
Let $\Omega\subset \rn$ be a bounded Lipschitz domain and let $0<s<1$, $0<p,q \leq \infty$. Then $f\in L_p(\Gamma)$ belongs to $\mathbf{B}^s_{p,q}(\Gamma)$ if, and only if, 
\[
f={\sum_{j,m}}\lambda_{j,m}a_{j,m},
\]
where $a_{j,m}$ are $\Lip^{\Gamma}$-atoms with
$
\supp a_{j,m}\subset Q_{j,m}^{\Gamma}
$
and $\lambda\in b^s_{p,q}(\Gamma)$, convergence being in $L_p(\Gamma)$. Furthermore,
\[
\|f|\mathbf{B}^s_{p,q}(\Gamma)\|=\inf \|\lambda|b^s_{p,q}(\Gamma)\|,
\]
where the infimum is taken over all possible representations. 
\end{theorem}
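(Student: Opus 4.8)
The plan is to reduce the statement to the already-established non-smooth atomic decomposition on $\mathbb{R}^{n-1}$ (Corollary \ref{th:atom-dec-3}) via the local diffeomorphisms $\psi^{(j)}$ and the resolution of unity $\{\varphi_j\}_{j=1}^N$. Recall that by Definition \ref{def-Bq-bd} and the equivalence \eqref{Equiv}, we have $\|f|\mathbf{B}^s_{p,q}(\Gamma)\|\sim\sum_{j=1}^N\|g_j|\mathbf{B}^s_{p,q}(\mathbb{R}^{n-1})\|$, where $g_j=(\varphi_j f)\circ(\psi^{(j)})^{-1}$, extended by zero outside $W_j$. So the task splits into two implications.

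\textbf{Sufficiency.} Suppose $f=\sum_{j,m}\lambda_{j,m}a_{j,m}$ with $\Lip^\Gamma$-atoms $a_{j,m}$ supported in $Q_{j,m}^\Gamma=dQ_{j,m}\cap\Gamma$ and $\lambda\in b^s_{p,q}(\Gamma)$. Fix $\ell\in\{1,\dots,N\}$ and look at $g_\ell=(\varphi_\ell f)\circ(\psi^{(\ell)})^{-1}$. Multiplying the expansion by $\varphi_\ell$ and pulling back by $(\psi^{(\ell)})^{-1}$ gives $g_\ell=\sum_{j,m}\lambda_{j,m}\,(\varphi_\ell a_{j,m})\circ(\psi^{(\ell)})^{-1}$. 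The key point is that $b_{j,m}^{(\ell)}:=(\varphi_\ell a_{j,m})\circ(\psi^{(\ell)})^{-1}$, after renormalization by a constant independent of $j,m$, is (a multiple of) a $\Lip$-atom on $\mathbb{R}^{n-1}$ located at a cube of side length $\sim 2^{-j}$: its support sits inside $\psi^{(\ell)}(dQ_{j,m}\cap K_\ell\cap\Gamma)$, which by the bi-Lipschitz property of $\psi^{(\ell)}$ is contained in $d'Q^{n-1}_{j,m'}$ for some $m'\in\mathbb{Z}^{n-1}$ and some fixed $d'>1$; the $L_\infty$-bound is preserved up to the constant $\max|\varphi_\ell|$; and the Lipschitz constant transforms as $2^j\mapsto 2^j\cdot(\text{Lip of }(\psi^{(\ell)})^{-1})+(\text{Lip of }\varphi_\ell)\lesssim 2^j$. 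One must also check that for fixed $(j,m)$ only boundedly many $\ell$ contribute and, conversely, that reindexing from $(j,m)$ with $Q_{j,m}\cap\Gamma\neq\emptyset$ to the $(n-1)$-dimensional cubes introduces only bounded overlap — this is exactly the kind of finite-multiplicity bookkeeping used in the proof of Theorem \ref{th:atom-dec}. The resulting coefficient sequence on $\mathbb{R}^{n-1}$ has $b^s_{p,q}(\mathbb{R}^{n-1})$-norm controlled by $\|\lambda|b^s_{p,q}(\Gamma)\|$ because the $(n-1)$-dimensional weight $2^{j(s-\frac{n-1}{p})}$ matches the weight built into $b^s_{p,q}(\Gamma)$ in Definition \ref{def-seq-gamma}. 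Applying Corollary \ref{th:atom-dec-3} (the "if" direction) on $\mathbb{R}^{n-1}$ to each $g_\ell$ and summing over $\ell$ yields $f\in\mathbf{B}^s_{p,q}(\Gamma)$ with $\|f|\mathbf{B}^s_{p,q}(\Gamma)\|\lesssim\|\lambda|b^s_{p,q}(\Gamma)\|$, hence $\|f|\mathbf{B}^s_{p,q}(\Gamma)\|\lesssim\inf\|\lambda|b^s_{p,q}(\Gamma)\|$.

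\textbf{Necessity.} Conversely, let $f\in\mathbf{B}^s_{p,q}(\Gamma)$. Then each $g_\ell\in\mathbf{B}^s_{p,q}(\mathbb{R}^{n-1})$, and by the "only if" part of Corollary \ref{th:atom-dec-3} we may write $g_\ell=\sum_{j,m'}\mu^{(\ell)}_{j,m'}c^{(\ell)}_{j,m'}$ with $\Lip$-atoms $c^{(\ell)}_{j,m'}$ on $\mathbb{R}^{n-1}$ and $\|\mu^{(\ell)}|b^s_{p,q}(\mathbb{R}^{n-1})\|\lesssim\|g_\ell|\mathbf{B}^s_{p,q}(\mathbb{R}^{n-1})\|$. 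Pushing forward by $\psi^{(\ell)}$ and using another resolution of unity $\{\tilde\varphi_\ell\}$ subordinate to $\{K_\ell\}$ with $\sum_\ell\tilde\varphi_\ell\equiv1$ near $\Gamma$ and $\tilde\varphi_\ell\varphi_\ell=\tilde\varphi_\ell$ (so that $f=\sum_\ell\tilde\varphi_\ell f=\sum_\ell(\tilde\varphi_\ell\circ(\psi^{(\ell)})^{-1}\!\!\cdot\,g_\ell)\circ\psi^{(\ell)}$ — here the cut-off guarantees the pushed-forward functions are well defined on all of $\Gamma$), we obtain $f=\sum_\ell\sum_{j,m'}\mu^{(\ell)}_{j,m'}\,\big((\tilde\varphi_\ell\circ(\psi^{(\ell)})^{-1})c^{(\ell)}_{j,m'}\big)\circ\psi^{(\ell)}$. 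As before, $\big((\tilde\varphi_\ell\circ(\psi^{(\ell)})^{-1})c^{(\ell)}_{j,m'}\big)\circ\psi^{(\ell)}$ is, up to a uniform constant, a $\Lip^\Gamma$-atom located at some $Q^\Gamma_{j,m}$; reindexing the finitely many $\ell$'s and the overlapping cubes into a single family $\{\lambda_{j,m}a_{j,m}\}$ gives a legitimate representation of $f$ in the sense of the theorem, with $\|\lambda|b^s_{p,q}(\Gamma)\|\lesssim\sum_\ell\|\mu^{(\ell)}|b^s_{p,q}(\mathbb{R}^{n-1})\|\lesssim\|f|\mathbf{B}^s_{p,q}(\Gamma)\|$. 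Taking the infimum over representations yields $\inf\|\lambda|b^s_{p,q}(\Gamma)\|\lesssim\|f|\mathbf{B}^s_{p,q}(\Gamma)\|$, and together with the sufficiency estimate this proves the claimed equivalence of (quasi-)norms. Convergence in $L_p(\Gamma)$ in both directions follows from convergence in $L_p(\mathbb{R}^{n-1})$ of the finitely many component series together with the boundedness of pull-back/push-forward and multiplication by smooth cut-offs on $L_p$.

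\textbf{Main obstacle.} The analytic content is light once the framework is in place; the real work is the bookkeeping. The delicate point is that $\Lip^\Gamma$-atoms and $\Lip$-atoms on $\mathbb{R}^{n-1}$ must be matched \emph{with constants uniform in $j,m$}: one has to verify that the bi-Lipschitz bounds on the $\psi^{(\ell)}$ (which hold for $|x-y|\le1$, hence for all cubes with $j$ large, and trivially for the finitely many coarse scales) carry the support condition $\supp a\subset dQ_{j,m}$ through the diffeomorphism into a support condition of the same type with a possibly enlarged but fixed dilation $d'$, and that the product rule for Lipschitz constants of $\varphi_\ell\cdot a$ does not spoil the $2^j$ scaling. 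Equally, one must control the passage from the index set $\{m:Q_{j,m}\cap\Gamma\neq\emptyset\}$ to $(n-1)$-dimensional lattice points: each boundary cube maps into $O(1)$ flat cubes and vice versa, uniformly in $j$, which is where the Lipschitz (cone) geometry of $\Gamma$ enters. This is precisely the technique behind Remark \ref{rem-bd-boundary} and Definition \ref{def-Bq-bd}, and behind the finite-multiplicity arguments already carried out in the proof of Theorem \ref{th:atom-dec}; so while technically the heaviest part, it is routine in spirit.
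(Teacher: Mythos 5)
Your proposal is correct and follows essentially the same route as the paper: both directions are reduced to the non-smooth (Lip-atom) decomposition of Corollary \ref{th:atom-dec-3} on $\mathbb{R}^{n-1}$ by transporting atoms back and forth through the bi-Lipschitz charts $\psi^{(\ell)}$, with the partition of unity handling the localization and the $L_p(\Gamma)$-convergence checked separately. The paper merely streamlines the chart bookkeeping by first assuming $\supp f\subset\{\varphi_l=1\}$ for a single $l$, whereas you carry the resolution of unity through explicitly; the substance is identical.
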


\proofstart 

\underline{Step 1:} {Fix $f\in {\bf B}^s_{p,q}(\Gamma)$. For simplicity, we suppose that 
$\supp f\subset \{x\in\Gamma:\varphi_l(x)=1\}$ for some $l\in\{1,2,\dots,N\}$. If this is not the case the arguments have to 
be slightly modified to incorporate the decomposition of unity \eqref{rou-2}. To simplify the notation we write $\varphi$
instead of $\varphi_l$ and $\psi$ instead of $\psi^{(l)}$. }Then we obtain
$$
\|f|\mathbf{B}^s_{p,q}(\Gamma)\|=\|f\circ {\psi}^{-1}|\mathbf{B}^s_{p,q}(\real^{n-1})\|.
$$
We use Corollary \ref{th:atom-dec-3} with $n$ replaced by $n-1$ to obtain an optimal atomic decomposition
\begin{equation}\label{eq:prdec1}
f\circ \psi^{-1}=\sum_{j,m}\lambda_{j,m}a_{j,m} \quad \text{where} \quad \|f\circ\psi^{-1}|\mathbf{B}^s_{p,q}(\R^{n-1})\|\sim \|\lambda|b^s_{p,q}(\real^{n-1})\|. 
\end{equation}

For $j\in\N_0$ and $m\in\Z^{n-1}$ fixed, we consider the function $a_{j,m}(\psi(x))$. Due to the Lipschitz properties of $\psi$,
this function is supported in $Q^\Gamma_{j,l}$ for some $l\in\Z^{n}$ and we denote it by $a^\Gamma_{j,l}(x).$
Furthermore, we set $\lambda'_{j,l}=\lambda_{j,m}$. This leads to the decomposition
\begin{equation}\label{eq:prdec2}
f=\sum_{j,l}\lambda'_{j,l}a^\Gamma_{j,l}.
\end{equation}
It is straightforward to verify that $a^\Gamma_{j,l}$ are $\Lip^{\Gamma}$-atoms since 
$\ 
\|a_{j,l}^{\Gamma}|L_{\infty}(\Gamma)\|\lesssim  \|a_{j,m}|L_{\infty}(W_l)\|\lesssim 1
\ $ 
and 
\[
\frac{|a_{j,l}^{\Gamma}(x)-a_{j,l}^{\Gamma}(y)|}{|x-y|}
=  \frac{|a_{j,m}(x')-a_{j,m}(y')|}{|{\psi}^{-1}(x')-{\psi}^{-1}(y')|}\\
\sim   \frac{|a_{j,m}(x')-a_{j,m}(y')|}{|x'-y'|}
\lesssim  2^{j}, \qquad x,y\in\Gamma. 
\]

Furthermore,  we have the estimate
$$
\|f|\mathbf{B}^s_{p,q}(\Gamma)\|=\|f\circ {\psi}^{-1}|\mathbf{B}^s_{p,q}(\real^{n-1})\|
\sim \|\lambda|b^s_{p,q}(\real^{n-1})\|=\|\lambda'|b^s_{p,q}(\Gamma)\|.
$$

\underline{Step 2:}

The proof of the opposite direction follows along the same lines. If $f$ on $\Gamma$ is given by
$$
f=\sum_{j,l}\lambda'_{j,l}a^\Gamma_{j,l},
$$
then
$\ \ds  
f\circ\psi^{-1}=\sum_{j,m}\lambda_{j,m}a_{j,m},
\ $
where $a_{j,m}(x)=a^\Gamma_{j,l}(\psi^{-1}(x))$ and $\lambda_{j,m}=\lambda'_{j,l}$ for suitable $m\in\Z^{n-1}$.
Again it follows that $a_{j,m}$ are ${\rm Lip}$-atoms on $\R^{n-1}$ and
$$
\|f|\mathbf{B}^s_{p,q}(\Gamma)\|=\|f\circ {\psi}^{-1}|\mathbf{B}^s_{p,q}(\real^{n-1})\|
\lesssim \|\lambda|b^s_{p,q}(\real^{n-1})\|=\|\lambda'|b^s_{p,q}(\Gamma)\|.
$$

\underline{Step 3:}\quad The convergence in $L_p(\Gamma)$ of the representation 
$\ \ds 
f={\sum_{j,m}}^{j,\Gamma}\lambda_{j,m} a_{j,m}^{\Gamma},
\ $ 
follows for $p\le 1$ by
\begin{align}
\left\|{\sum_{j,m}}^{j,\Gamma}\lambda_{j,m}a^{\Gamma}_{j,m}|L_p(\Gamma)\right\|^p
&\leq {\sum_{j,m}}^{j,\Gamma}|\lambda_{j,m}|^p\|a^{\Gamma}_{j,m}|L_p(\Gamma)\|^p\notag\\
&\lesssim \sum_{j}2^{-j(n-1)}{\sum_{m}}^{j,\Gamma}|\lambda_{j,m}|^p = \|\lambda|b^{0}_{p,p}(\Gamma)\|^p\lesssim \|\lambda|b^{s}_{p,q}(\Gamma)\|^p\label{help-e1}
\end{align}
and using
\begin{align}
\left\|{\sum_{j,m}}^{j,\Gamma}\lambda_{j,m}a^{\Gamma}_{j,m}|L_p(\Gamma)\right\|&\le
\sum_j\left\|{\sum_{m}}^{j,\Gamma}\lambda_{j,m}a^{\Gamma}_{j,m}|L_p(\Gamma)\right\|
\lesssim \sum_j 2^{-j(n-1)/p}\left({\sum_{m}}^{j,\Gamma}|\lambda_{j,m}|^p\right)^{1/p}\notag\\
&=\|\lambda|b^{0}_{p,1}(\Gamma)\|\lesssim \|\lambda|b^{s}_{p,q}(\Gamma)\|\label{help-e2}
\end{align}
for $p>1$.
\proofend

\subsection{Interpolation results}

Interpolation results for  $\Bd(\rn)$ as obtained in \cite[Cor.~6.2, 6.3]{dVP} carry over to the spaces $\Bd(\Gamma)$,
which follows immediately from their definition and properties of real  interpolation.

\begin{theorem}\label{th:interpol-B-gamma}
Let $\Omega$ be a bounded Lipschitz domain with boundary $\Gamma$. 

\bit
\item[(i)] Let $0<p,q,q_0,q_1\leq \infty$, $s_0\neq s_1$, and $0<s_i<1$.  Then 
\[
\left(\mathbf{B}^{s_0}_{p,q_0}(\Gamma), \mathbf{B}^{s_1}_{p,q_1}(\Gamma)\right)_{\theta,q}=\Bd(\Gamma),
\]
where $0<\theta<1$ and $s=(1-\theta)s_0+\theta s_1$.

\item[(ii)] Let $0<p_i,q_i\leq \infty$, $s_0\neq s_1$ and $0<s_i<1$. Then for each $0<\theta<1$, $s=(1-\theta)s_0+\theta s_1$, 
$ 
\frac 1p=\frac{1-\theta}{p_0}+\frac{\theta}{p_1},$  \text{and for } $\frac 1q=\frac{1-\theta}{q_0}+\frac{\theta}{q_1}
$
we have
\[
\left(\mathbf{B}^{s_0}_{p_0,q_0}(\Gamma), \mathbf{B}^{s_1}_{p_1,q_1}(\Gamma)\right)_{\theta,q}=\Bd(\Gamma),
\]
provided $p=q$. 
\eit
\end{theorem}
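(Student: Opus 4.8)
The plan is to reduce the interpolation statement on $\Gamma$ to the corresponding interpolation results for Besov spaces on $\real^{n-1}$, exploiting the fact that the spaces $\Bd(\Gamma)$ are defined via pull-back through the local Lipschitz diffeomorphisms $\psi^{(j)}$ and a fixed resolution of unity $\{\varphi_j\}_{j=1}^N$, as in Definition~\ref{def-Bq-bd} and the equivalence \eqref{Equiv}. Concretely, fix the covering and resolution of unity once and for all; then the linear map $R: f\mapsto (g_1,\dots,g_N)$, where $g_j = (\varphi_j f)\circ(\psi^{(j)})^{-1}$ extended by zero outside $W_j$, is, by \eqref{Equiv}, an isomorphism of $\Bd(\Gamma)$ onto a (complemented) subspace of $\bigoplus_{j=1}^N \Bd(\real^{n-1})$, with a common bounded left inverse $S$ (reconstruction via $\sum_j (\cdot)\circ\psi^{(j)}$, which is well-defined on the range of $R$ because of the partition of unity). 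The point is that $R$ and $S$ are the \emph{same} maps for every choice of parameters $s,p,q$ in the admitted range, so they form a retraction/coretraction pair simultaneously for all the spaces in question.

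Given this, part (i) follows from the general principle that real interpolation commutes with retracts: if $R\colon A_i \to \tilde A_i$ and $S\colon \tilde A_i \to A_i$ with $S R = \id$ are bounded for $i=0,1$, then $(A_0,A_1)_{\theta,q}$ is the retract of $(\tilde A_0,\tilde A_1)_{\theta,q}$ under the same pair, hence
\[
\left(\mathbf{B}^{s_0}_{p,q_0}(\Gamma),\mathbf{B}^{s_1}_{p,q_1}(\Gamma)\right)_{\theta,q}
= S\!\left[\bigoplus_{j=1}^N\left(\mathbf{B}^{s_0}_{p,q_0}(\real^{n-1}),\mathbf{B}^{s_1}_{p,q_1}(\real^{n-1})\right)_{\theta,q}\right].
\]
Now invoke \cite[Cor.~6.2, 6.3]{dVP}, which identifies the right-hand interpolation space on $\real^{n-1}$ with $\mathbf{B}^s_{p,q}(\real^{n-1})$ for $s=(1-\theta)s_0+\theta s_1$ (here one uses $0<s_0,s_1<1$, $s_0\neq s_1$, so $0<s<1$ and the differences-based definition is legitimate on $\real^{n-1}$, which is an $(\varepsilon,\delta)$-domain). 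Applying $S$ and using \eqref{Equiv} once more gives exactly $\Bd(\Gamma)$. Part (ii) is handled identically: the relevant statement from \cite{dVP} for the off-diagonal case (varying $p_i$, with the condition $p=q$) transfers verbatim through the same retraction, since $R$ and $S$ are bounded between the corresponding $\real^{n-1}$-spaces for all admitted parameters. One must check that the $L_p$-boundedness of $R,S$ holds uniformly, but this is precisely the content of the multiplier and diffeomorphism results (Propositions~\ref{prop-mult-diffeo} and \ref{equ-norm-dom}) already used to show that $\Bd(\Gamma)$ is well-defined.

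The main obstacle is a technical one: verifying that multiplication by the smooth cut-offs $\varphi_j$ and composition with the Lipschitz diffeomorphisms $\psi^{(j)}$ really are bounded operators on $\mathbf{B}^s_{p,q}$ for the \emph{full} range $0<p,q\leq\infty$ and $0<s<1$, including $p<1$ where the Fourier-analytic theory of diffeomorphisms and multipliers is more delicate. For the diffeomorphism part one can either appeal to the non-smooth atomic decomposition of Corollary~\ref{th:atom-dec-3} — a $\Lip$-atom composed with a Lipschitz diffeomorphism is again (a multiple of) a $\Lip$-atom on a comparable cube, and the sequence space $b^s_{p,q}$ is clearly invariant — or to Proposition~\ref{prop-mult-diffeo}; either way the argument is exactly the one underlying Theorem~\ref{th:at-bd-1}, so the work is essentially already done. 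A minor additional point in part~(ii) is to make sure the index relations $1/p=(1-\theta)/p_0+\theta/p_1$, $1/q=(1-\theta)/q_0+\theta/q_1$ together with $p=q$ are consistent with the hypotheses of the cited result on $\real^{n-1}$; granting \cite[Cor.~6.3]{dVP}, there is nothing further to prove.
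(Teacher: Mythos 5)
Your proposal is essentially the paper's own proof: one realizes $\Bd(\Gamma)$ as a retract of $\bigoplus_{j=1}^N\Bd(\R^{n-1})$ via the coretraction $f\mapsto\big((\varphi_j f)\circ(\psi^{(j)})^{-1}\big)_{1\le j\le N}$ and imports the interpolation identities from \cite[Cor.~6.2, 6.3]{dVP} by the retraction--coretraction method, with the boundedness of the pull-backs and cut-offs for all $0<p,q\le\infty$ supplied by Proposition \ref{prop-mult-diffeo}. The one point to tighten is that your left inverse $S=\sum_j(\cdot)\circ\psi^{(j)}$ must be defined and bounded on the \emph{whole} direct sum, not merely on the range of the coretraction, for the retract lemma to apply to the interpolated couple; the paper achieves this by inserting cut-offs $\Psi_j\in C_0^{\infty}(\rn)$ with $\supp\Psi_j\subseteq K_j$ and $\Psi_j\equiv 1$ near $\supp\varphi_j$, i.e.\ $S\big((g_j)_j\big)=\sum_j\Psi_j\,(g_j\circ\psi^{(j)})$.
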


\proofstart By definition of the spaces $\Bd(\Gamma)$ we can construct a well-defined and bounded linear operator 
\[
E:\Bd(\Gamma)\longrightarrow \oplus_{1\leq j\leq N} \Bd(\real^{n-1}),
\]
\[
(Ef)_j:=(\varphi_j f)\circ {\psi^{(j)}}^{-1}\quad \text{on}\; \real^{n-1}, \quad 1\leq j\leq N,
\]
which has a bounded and linear left inverse given by
\[
R:\oplus_{1\leq j\leq N}\Bd(\real^{n-1})\longrightarrow \Bd(\Gamma)
\]
\[
R\left((g_j)_{1\leq j\leq N}\right):=\sum_{j=1}^N \Psi_j \left(g_j\circ \psi_j\right)\qquad \text{on}\quad \Gamma,
\]
where $\Psi_j\in C_0^{\infty}(\rn)$, $\supp \Psi_j\subseteq K_j$, $\Psi\equiv 1$ in a neighborhood of $\supp \varphi_j$. \\

Straightforward calculation shows for $f\in \Bd(\Gamma)$
\[
(R\circ E)f=R(Ef)=R\left(\left((\varphi_jf)\circ {\psi^{(j)}}^{-1}\right)_{1\leq j\leq N}\right)=\sum_{j=1}^N \Psi_j\varphi_j f=\sum_{j=1}^N \varphi_j f=f,
\]
i.e., 
\[
R\circ E=I, \quad \text{the identity operator on }\Bd(\Gamma).
\]
One arrives at a standard situation in interpolation theory. Hence, by the method of retraction-coretraction, cf. \cite[Sect.~1.2.4, 1.17.1]{T-I}, the results for $\Bd(\real^{n-1})$ carry over to the spaces $\Bd(\Gamma)$. 
Therefore, (i) and (ii) are a consequence of \cite[Cor.~6.2, 6.3]{dVP}.
\proofend

Furthermore, we briefly show that the interpolation results for Besov spaces $\Bd(\rn)$ also hold for spaces on domains $\Bd(\Omega)$. This is not automatically clear in our context since the extension operator 
\[
\Ex:\Bd(\Omega)\longrightarrow \Bd(\rn)
\] 
constructed in \cite{dVS93} is not linear.  The situation is different for spaces $\B(\Omega)$.  Here Rychkov's (linear) extension operator, cf. \cite{ryc00}, automatically yields interpolation results for B-spaces on domains. 

\begin{theorem}\label{th:interpol-B-omega}
Let $\Omega$ be a bounded Lipschitz domain. 

\bit
\item[(i)] Let $0<p,q,q_0,q_1\leq \infty$, $s_0\neq s_1$, and $0<s_i<1$. Then 
\[
\left(\mathbf{B}^{s_0}_{p,q_0}(\Omega), \mathbf{B}^{s_1}_{p,q_1}(\Omega)\right)_{\theta,q}=\Bd(\Omega),
\]
where $0<\theta<1$ and $s=(1-\theta)s_0+\theta s_1$.

\item[(ii)] Let $0<p_i,q_i\leq \infty$, $s_0\neq s_1$ and $0<s_i<1$. Then for each $0<\theta<1$, $s=(1-\theta)s_0+\theta s_1$, 
$ 
\frac 1p=\frac{1-\theta}{p_0}+\frac{\theta}{p_1},$  \text{and for } $\frac 1q=\frac{1-\theta}{q_0}+\frac{\theta}{q_1}
$
we have
\[
\left(\mathbf{B}^{s_0}_{p_0,q_0}(\Omega), \mathbf{B}^{s_1}_{p_1,q_1}(\Omega)\right)_{\theta,q}=\Bd(\Omega),
\]
provided $p=q$. 
\eit
\end{theorem}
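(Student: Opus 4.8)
The plan is to reduce Theorem~\ref{th:interpol-B-omega} to the known interpolation results for Besov spaces on $\R^n$ from \cite[Cor.~6.2, 6.3]{dVP} by exhibiting a retraction--coretraction pair between $\Bd(\Omega)$ and $\Bd(\R^n)$, exactly as was done for $\Bd(\Gamma)$ in the preceding theorem. The subtlety, flagged in the paragraph just before the statement, is that the extension operator $\Ex\colon\Bd(\Omega)\to\Bd(\R^n)$ constructed in \cite{dVS93} is \emph{not linear}, so one cannot immediately invoke the abstract retraction--coretraction machinery of \cite[Sect.~1.2.4, 1.17.1]{T-I}, which requires linear maps. The main obstacle is therefore to get around the nonlinearity of the extension.

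The way I would handle this is to use the restriction operator as the retraction rather than trying to use $\Ex$ as a coretraction in the abstract sense. Concretely, let $\mathrm{re}\colon \Bd(\R^n)\to\Bd(\Omega)$ be the (linear, bounded) restriction $g\mapsto g|_\Omega$; by Remark~\ref{extr_char} this is surjective and bounded for every admissible $(s,p,q)$. The nonlinear extension $\Ex$ is bounded and satisfies $\mathrm{re}\circ\Ex=\id$ on $\Bd(\Omega)$. For the $K$-functional this is already enough in one direction: since $\mathrm{re}$ is linear and bounded on both endpoint spaces, one gets immediately, for $f\in \mathbf{B}^{s_0}_{p,q_0}(\Omega)+\mathbf{B}^{s_1}_{p,q_1}(\Omega)$ and any decomposition $\Ex$-independent argument,
\[
K(t,f;\mathbf{B}^{s_0}_{p,q_0}(\Omega),\mathbf{B}^{s_1}_{p,q_1}(\Omega))\le K(t,\Ex f;\mathbf{B}^{s_0}_{p,q_0}(\R^n),\mathbf{B}^{s_1}_{p,q_1}(\R^n))
\]
when $f\in \mathbf{B}^{s_1}_{p,q_1}(\Omega)$ (so that $\Ex f$ makes sense in the smaller space as well — here one should check that the \emph{same} nonlinear $\Ex$ from \cite{dVS93} maps $\mathbf{B}^{s_i}_{p,q_i}(\Omega)$ boundedly into $\mathbf{B}^{s_i}_{p,q_i}(\R^n)$ simultaneously for $i=0,1$, which is the case since the construction there is universal in the smoothness parameter in the range $0<s<1$), and conversely
\[
K(t,\Ex f;\cdots_{\R^n})\lesssim K(t, f;\cdots_{\Omega})
\]
because a near-optimal decomposition $\Ex f=g_0+g_1$ in the $\R^n$-spaces restricts to a decomposition $f=g_0|_\Omega+g_1|_\Omega$ in the $\Omega$-spaces, and $\Ex(g_1|_\Omega)$ need not equal $g_1$, but one only needs the \emph{upper} bound on $K(t,\Ex f)$, which follows from boundedness of $\Ex$ on both endpoints applied to $g_1|_\Omega$ together with $\|\Ex(g_0|_\Omega)\|\le c\|g_0|_\Omega\|\le c\|g_0\|$. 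Combining the two displays gives $K(t,\Ex f;\cdots_{\R^n})\sim K(t,f;\cdots_{\Omega})$ up to constants, uniformly in $t$, and hence the real interpolation spaces coincide. Then part (i), and part (ii) under the restriction $p=q$, follow directly from \cite[Cor.~6.2, 6.3]{dVP} together with Remark~\ref{extr_char}, which identifies $\Bd(\Omega)$ with the restriction of $\Bd(\R^n)$ for each parameter constellation.

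I expect the genuinely delicate point to be the verification that the single nonlinear extension operator $\Ex$ of \cite{dVS93} is simultaneously bounded on the two endpoint spaces $\mathbf{B}^{s_0}_{p,q_0}(\Omega)$ and $\mathbf{B}^{s_1}_{p,q_1}(\Omega)$ (and, for the $K$-functional comparison, also on the sum space), since this is what replaces the linearity hypothesis in the standard retraction--coretraction theorem; once that uniformity is in hand, the equivalence of $K$-functionals is a short argument of the type sketched above and the conclusion is immediate. An alternative, perhaps cleaner, route — which I would mention as a remark — is that for the range $0<s<1$ Rychkov's linear extension operator \cite{ryc00} is available and applies to the spaces $\B(\Omega)$ which, in this smoothness range and with the cone/$(\eps,\delta)$-property of Lipschitz domains, can be matched with $\Bd(\Omega)$; using the linear operator one is literally in the classical retraction--coretraction situation of \cite[Sect.~1.2.4, 1.17.1]{T-I} and no nonlinearity issue arises at all. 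Either way the essential content is that interpolation commutes with restriction to a Lipschitz domain in this parameter range.
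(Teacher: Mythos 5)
Your overall strategy is the paper's: reduce to the interpolation results on $\rn$ from \cite[Cor.~6.2, 6.3]{dVP} via the DeVore--Sharpley extension operator and a two-sided $K$-functional comparison
\[
K(t,f;X_0(\Omega),X_1(\Omega))\sim K(t,\Ex f;X_0(\rn),X_1(\rn)).
\]
The easy direction is fine as you state it: restriction is linear and norm-decreasing on both endpoints, so any decomposition of $\Ex f$ on $\rn$ restricts to one of $f$ on $\Omega$. But your justification of the hard direction, $K(t,\Ex f;\rn)\lesssim K(t,f;\Omega)$, has a genuine gap, and it is exactly the point where the nonlinearity bites. To bound $K(t,\Ex f)$ from above you must \emph{produce} a decomposition of $\Ex f$ starting from a near-optimal decomposition $f=f_0+f_1$ on $\Omega$. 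The natural candidate is $\Ex f=\Ex f_0+(\Ex f-\Ex f_0)$, and simultaneous boundedness of $\Ex$ on the two endpoint spaces gives you no control whatsoever on $\|\Ex f-\Ex f_0|X_1(\rn)\|$ in terms of $\|f-f_0|X_1(\Omega)\|$, precisely because $\Ex$ is not linear (an abstract nonlinear map can be bounded on each endpoint and still fail this). Your own argument instead starts from a near-optimal decomposition $\Ex f=g_0+g_1$ on $\rn$ -- which is circular, since that only recovers $K(t,\Ex f;\rn)$ itself -- and the replacement pieces $\Ex(g_0|_\Omega)$, $\Ex(g_1|_\Omega)$ do not sum to $\Ex f$, again by nonlinearity, so they do not constitute a decomposition of $\Ex f$ at all. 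The paper does not derive the equivalence from boundedness either; it cites \cite[p.~859]{dVS93}, where the comparison is established using the specific structure of the extension (local near-best polynomial approximation on Whitney cubes, through which the modulus of smoothness of $\Ex f$ on $\rn$ is controlled by that of $f$ on $\Omega$, and the modulus of smoothness realizes the relevant $K$-functional). That structural input is the missing ingredient; ``bounded on both endpoints plus $\tr\circ\Ex=\id$'' is not a sufficient hypothesis.

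Your proposed alternative via Rychkov's linear extension operator also does not close the gap for the full range of the theorem: Rychkov's operator acts on the Fourier-analytic scale $\B(\Omega)$, and the paper points out immediately before the statement that this settles interpolation for $\B(\Omega)$ but that the situation for $\Bd(\Omega)$ is different. The two scales coincide only under restrictions such as \eqref{eq:May}; for $p<1$ and $0<s\le n(\frac1p-1)$ -- which intersects the hypotheses $0<s_i<1$, $0<p\le\infty$ -- they differ, so the linear retraction--coretraction argument through $\B(\Omega)$ does not cover all admissible parameters.
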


\proofstart  In spite of our remarks before the theorem, we can nevertheless use the extension operator 
\[
\Ex:\Bd(\Omega)\longrightarrow \Bd(\rn)
\]
constructed in \cite{dVS93} to show that interpolation results for spaces $\Bd(\rn)$ carry over to spaces $\Bd(\Omega)$. Let $X_i(\Omega):=\mathbf{B}^{s_i}_{p_i,q_i}(\Omega)$. By the explanations given in \cite[p.~859]{dVS93} we have the estimate
\beq\label{est-interpol}
K(f,t,X_0(\Omega), X_1(\Omega))\sim K(\Ex f, t, X_0(\rn), X_1(\rn))
\eeq
although the operator $\Ex$  is not linear. Let $\mathbf{B}^{\theta}(\Omega):=\left(\mathbf{B}^{s_0}_{p_0,q_0}(\Omega), \mathbf{B}^{s_1}_{p_1,q_1}(\Omega)\right)_{\theta,q}$ with the given restrictions on the parameters given in (i) and (ii), respectively. We have to prove that 
\[
\mathbf{B}^{\theta}(\Omega)=\Bd(\Omega),
\] 
but this follows immediately from \cite[Cor.~6.2,6.3]{dVP} using \eqref{est-interpol}, since
\[
\|f|\mathbf{B}^{\theta}(\Omega)\|\sim\|\Ex f|\mathbf{B}^{\theta}(\rn)\|\sim \|\Ex f|\Bd(\rn)\|\sim \|f|\Bd(\Omega)\|.
\] 
\proofend

\subsection{Properties of Besov spaces on Lipschitz domains}

The non-smooth atomic decomposition enables us to generalize \cite[Prop.~2.5]{sch08c} and obtain new results concerning 
diffeomorphisms and pointwise multipliers in $\Bd(\rn)$ in the following way. For related matters we also refer to 
\cite[Th.~3.3.3]{May05}.

\begin{proposition}\label{prop-mult-diffeo}
Let $0<p,q\leq\infty$, $0<s<1$ and $\sigma>s$.
\begin{enumerate}
\item[\rm (i)] $($Diffeomorphisms$)$\\ 
Let $\psi$ be a Lipschitz diffeomorphism. Then 
$f\longrightarrow f\circ \psi$
is a linear and bounded operator from $\Bd(\rn)$ onto itself.
\item[\rm (ii)] $($Pointwise multipliers$)$\\ 
Let $h\in {\mathcal C}^{\sigma}(\rn)$. Then 
$f\longrightarrow hf$ 
is a linear and bounded operator from $\Bd(\rn)$ into itself.
\end{enumerate}
\end{proposition}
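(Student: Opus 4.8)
The strategy is to exploit the non-smooth atomic decomposition from Corollary~\ref{th:atom-dec-3}, since both operations turn $\Lip$-atoms into $\Lip$-atoms (up to harmless constants), which is exactly what their limited smoothness is designed for.

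\emph{Part (i).} Start with an optimal $\Lip$-atomic representation $f=\sum_{j,m}\lambda_{j,m}a_{j,m}$ with $\|\lambda|b^s_{p,q}\|\sim\|f|\Bd(\rn)\|$, so that $f\circ\psi=\sum_{j,m}\lambda_{j,m}(a_{j,m}\circ\psi)$. First I would check that each $a_{j,m}\circ\psi$ is again (a fixed multiple of) a $\Lip$-atom: since $\psi$ is a Lipschitz diffeomorphism, $|\psi(x)-\psi(y)|\sim|x-y|$ for $|x-y|\le 1$, hence $\supp(a_{j,m}\circ\psi)=\psi^{-1}(\supp a_{j,m})\subset d'Q_{j,m'}$ for some $d'>1$ and suitable $m'\in\zn$ (for $j$ large the cube has side $\le 1$, so the bi-Lipschitz estimate controls its image; the finitely many small $j$ are absorbed into constants), and the Lipschitz constant of $a_{j,m}\circ\psi$ is at most $\mathrm{Lip}(a_{j,m})\cdot\mathrm{Lip}(\psi)\lesssim 2^j$, while the sup-norm is unchanged. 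Relabelling the atoms and coefficients accordingly gives a $\Lip$-atomic decomposition of $f\circ\psi$ with coefficient sequence $\lambda'$ satisfying $\|\lambda'|b^s_{p,q}\|\lesssim\|\lambda|b^s_{p,q}\|$ (the relabelling $m\mapsto m'$ is finite-to-one with bounded multiplicity, so the sequence-space quasi-norm changes only by a constant). By Corollary~\ref{th:atom-dec-3} applied to $f\circ\psi$ this yields $\|f\circ\psi|\Bd(\rn)\|\lesssim\|f|\Bd(\rn)\|$; boundedness of the inverse, hence surjectivity, follows by applying the same argument to $\psi^{-1}$, and linearity is obvious.

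\emph{Part (ii).} Again take an optimal $\Lip$-atomic decomposition of $f$ and write $hf=\sum_{j,m}\lambda_{j,m}(h\,a_{j,m})$. The point is that if $h\in\mathcal C^\sigma(\rn)=\Bdx{\sigma}{\infty}{\infty}(\rn)$ with $\sigma>s$, then the products $h\,a_{j,m}$ are, after normalization, $(\sigma',p)$-atoms for some $s<\sigma'<\sigma$: they are supported in the same dilated cube $dQ_{j,m}$, and one needs the estimate $\|(h\,a_{j,m})(2^{-j}\cdot)|\mathbf{B}^{\sigma'}_{p}(\rn)\|\lesssim 1$ uniformly in $j,m$. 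This is where the homogeneity estimate (Theorem~\ref{hom-B}) enters: rescaling to the unit cube, the product of a $\mathbf{B}^{\sigma'}_p$-bounded function (here $a_{j,m}(2^{-j}\cdot)$, bounded since $\Lip(\rn)\hookrightarrow B^1_\infty(\rn)\hookrightarrow\mathbf{B}^{\sigma'}_p$ locally for $\sigma'<1$) with a fixed $\mathcal C^\sigma$-function $h$ stays bounded in $\mathbf{B}^{\sigma'}_p$ on a bounded set — a pointwise-multiplier statement on $\rn$ at the ``easy'' end (multiplier of higher smoothness than the space). Since $s<\sigma'$, Theorem~\ref{th:atom-dec} (with $\sigma'$ in place of $\sigma$) then gives $\|hf|\Bd(\rn)\|\lesssim\|\lambda|b^s_{p,q}\|\sim\|f|\Bd(\rn)\|$.

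\textbf{Main obstacle.} The delicate point in (ii) is the uniform-in-$(j,m)$ bound $\|(h\,a_{j,m})(2^{-j}\cdot)|\mathbf{B}^{\sigma'}_p\|\lesssim 1$: one must argue that multiplying by $h$ does not cost a factor growing with $j$. After the dilation $x\mapsto 2^{-j}x$, the atom becomes a $\Lip(\rn)$-bounded function supported in a cube of side $\sim d$, but $h$ itself does \emph{not} rescale nicely — its dilate $h(2^{-j}\cdot)$ becomes flatter, and one should check that this only helps (the relevant seminorms of $h(2^{-j}\cdot)$ on the fixed cube are $\le$ those of $h$ for $j\ge 0$). The cleanest route is: localize via a cutoff $\chi$ equal to $1$ on $dQ_{0,0}$, write $(h\,a_{j,m})(2^{-j}\cdot)=(\chi\cdot h(2^{-j}\cdot))\cdot a_{j,m}(2^{-j}\cdot)$, bound $\chi\cdot h(2^{-j}\cdot)$ in $\mathcal C^\sigma$ uniformly, and invoke the pointwise-multiplier property of $\mathcal C^\sigma$ on $\mathbf{B}^{\sigma'}_p$ with $\sigma'<\sigma$ — a known result for Besov spaces on $\rn$ (e.g.\ via paramultiplication or via Proposition~\ref{prop-atom-ns}-type arguments), together with the fact that the translations $m$ are handled by translation-invariance of the norms. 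Part (i) is routine once the bi-Lipschitz support tracking is set up carefully for small $j$.
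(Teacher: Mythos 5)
Your part (i) is essentially the paper's proof: decompose $f$ into $\Lip$-atoms via Corollary \ref{th:atom-dec-3}, observe that $a_{j,m}\circ\psi$ is again a $\Lip$-atom on a comparably sized cube with Lipschitz constant $\lesssim 2^j$, and reassemble; your extra remarks on the bounded-multiplicity relabelling and on surjectivity via $\psi^{-1}$ are details the paper leaves implicit. Part (ii) follows the same architecture as the paper (show that $h\,a_{j,m}$ are $(\sigma',p)$-atoms for some $s<\sigma'<\sigma$ and invoke Theorem \ref{th:atom-dec}), but you obtain the key uniform bound $\|(h\,a_{j,m})(2^{-j}\cdot)|\mathbf{B}^{\sigma'}_p(\rn)\|\lesssim 1$ by appealing to a pointwise-multiplier theorem for $\mathcal C^\sigma$ acting on $\mathbf{B}^{\sigma'}_p(\rn)$. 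That is the one place where you should be careful: for $p<1$ and $\sigma'\le n(1/p-1)$ the spaces $\mathbf{B}^{\sigma'}_p$ of this paper differ from the Fourier-analytic Besov spaces, so ``paramultiplication'' is not a safe reference there. The claim itself is still true for these difference-defined spaces with $0<\sigma'<\sigma<1$ (a direct estimate $\omega_1(hg,t)_p\le\|h\|_\infty\,\omega_1(g,t)_p+c\,t^{\sigma}\|h|\mathcal C^\sigma\|\,\|g|L_p\|$ does the job for all $0<p\le\infty$), but the paper avoids the issue altogether by a neater trick: since $(h\,a_{j,m})(2^{-j}\cdot)$ has uniformly bounded support, one has $\|(h\,a_{j,m})(2^{-j}\cdot)|\mathbf{B}^{\sigma'}_p\|\le\|(h\,a_{j,m})(2^{-j}\cdot)|\mathbf{B}^{\sigma}_\infty\|$, and then only the multiplication-algebra property of $\mathcal C^\sigma=\mathbf{B}^\sigma_\infty$ is needed, together with $\|h(2^{-j}\cdot)|\mathbf{B}^\sigma_\infty\|\lesssim\|h|\mathbf{B}^\sigma_\infty\|$ for $j\ge 0$ (which you correctly identify as the point where $j\ge 0$ matters). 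Two minor remarks: the paper reduces first to $0<s<\sigma<1$ and uses smooth $K$-atoms with $K=1$ rather than $\Lip$-atoms in (ii) (either works, since $\Lip\hookrightarrow\mathbf{B}^1_\infty\hookrightarrow\mathcal C^\sigma$ for $\sigma<1$), and the homogeneity Theorem \ref{hom-B} is not actually needed, because the definition of $(\sigma',p)$-atoms is already phrased in terms of the dilated function $a(2^{-j}\cdot)$.
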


\proofstart
Concerning (i), we make use of the atomic decomposition as in \eqref{repr_1b} with the Lip-atoms from Definition \ref{Lip-atom}. 
Then we have 
\[
f\circ \psi=\sum_{j=0}^{\infty}\sum_{m\in\zn}\lambda_{j,m}a_{j,m}\circ \psi
\]
and $a\circ \psi$ is a $\Lip$-atom based on a new cube, and multiplied with a constant depending on $\psi$,  since
\[
|(a_{j,m}\circ \psi)(x)-(a_{j,m}\circ \psi)(y)|\leq 2^j |\psi(x)-\psi(y)|\lesssim 2^j |x-y|
\]

To prove (ii) we argue as follows. First, we may suppose that $0<s<\sigma<1$. Furthermore, we choose a real parameter $\sigma'$ with
$s<\sigma'<\sigma$. We take the smooth atomic decomposition \eqref{repr_01'} with $K$-atoms $a_{j,m}$, where $K=1$.
Multiplied with $h\in {\mathcal C}^{\sigma}$, it gives a new (non-smooth) atomic decomposition of $hf$. Its convergence in $L_p(\R^n)$
follows from the convergence of \eqref{repr_01'} in $L_p(\R^n)$ and the boundedness of $h$.\\
It remains to verify, that $ha_{j,m}$ are  non-smooth $(\sigma',p)$-atoms. The support property follows immediately
from the support property of $a_{j,m}$.
We use the bounded support of $(ha_{j,m})(2^{-j}\cdot)$ and the multiplier assertion for ${\mathbf B}^\sigma_\infty(\R^n)$ as presented 
in \cite[Section 4.6.1,Theorem 2]{RS96} to get
\begin{align*}
\|(h a_{j,m})(2^{-j}\cdot)|{\mathbf B}^{\sigma'}_{p}(\R^n)\|&\le \|(h a_{j,m})(2^{-j}\cdot)|{\mathbf B}^{\sigma}_{\infty}(\R^n)\|\\
&=\|h(2^{-j}\cdot)\cdot a_{j,m}(2^{-j}\cdot)|{\mathbf B}^{\sigma}_{\infty}(\R^n)\|\\
&\lesssim \|h(2^{-j}\cdot)|{\mathbf B}^{\sigma}_{\infty}(\R^n)\|\cdot\|a_{j,m}(2^{-j}\cdot)|{\mathbf B}^{\sigma}_{\infty}(\R^n)\|.
\end{align*}
The last product is bounded by a constant due to the inequality 
$$
\|h(2^{-j}\cdot)|{\mathbf B}^{\sigma}_{\infty}(\R^n)\|\lesssim \|h|{\mathbf B}^{\sigma}_{\infty}(\R^n)\|,\quad j\in\N_0,
$$
which may be verified directly (or found in \cite[Section 1.7]{Bo83} or \cite[Section 2.3.1]{ET96}), 
combined with the fact that $a_{j,m}$ are $K$-atoms for $K=1.$
\proofend

Furthermore, we establish an equivalent quasi-norm for $\Bd(\Omega)$.

\begin{proposition}\label{equ-norm-dom}
 Let $0<p,q\leq\infty$, $0<s<1$, and $\Omega$ be a bounded Lipschitz domain. Then
\beq\label{equiv-norm-1}
\|\varphi_0 f|\Bd(\rn)\|+\sum_{j=1}^N \|(\varphi_j f)(\psi^{(j)}(\cdot))^{-1}|\Bd(\rn_+)\|
\eeq
is an equivalent quasi-norm in $\Bd(\Omega)$.
\end{proposition}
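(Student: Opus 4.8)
The plan is to show the two-sided estimate between $\|f|\Bd(\Omega)\|$ and the expression in \eqref{equiv-norm-1} by exploiting the resolution of unity $\{\varphi_j\}_{j=0}^N$ subordinate to the covering $\Omega\subset K_0\cup\bigcup_{j=1}^N K_j$, together with the pointwise multiplier and diffeomorphism results of Proposition \ref{prop-mult-diffeo} and the fact (Remark \ref{extr_char}) that $\Bd(\Omega)$ is the restriction space of $\Bd(\rn)$.

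For the estimate ``$\lesssim$'' (i.e.\ \eqref{equiv-norm-1} bounds $\|f|\Bd(\Omega)\|$), I would start from an arbitrary decomposition $f=\varphi_0 f+\sum_{j=1}^N \varphi_j f$ on $\Omega$. Each $\varphi_0 f$ already lies in $\Bd(\rn)$ after extension by zero, since $\varphi_0$ is a smooth compactly supported multiplier and $\supp\varphi_0 f\subset K_0$ with $\overline{K_0}\subset\Omega$; one controls its norm by $\|\varphi_0 f|\Bd(\rn)\|$ using Proposition \ref{prop-mult-diffeo}(ii) (with $\sigma$ chosen in $(s,1)$, noting $\varphi_0\in C^\infty$). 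For the boundary pieces, given $g_j:=(\varphi_j f)\circ(\psi^{(j)})^{-1}\in\Bd(\rn_+)$, I would extend $g_j$ to $\Bd(\rn)$ (e.g.\ via the extension operator for the half-space, or simply take any $\Bd(\rn)$-representative), pull back by the Lipschitz diffeomorphism $\psi^{(j)}$ using Proposition \ref{prop-mult-diffeo}(i), multiply by a cut-off $\Psi_j\in C_0^\infty(\rn)$ with $\Psi_j\equiv 1$ near $\supp\varphi_j$ using Proposition \ref{prop-mult-diffeo}(ii), and sum over $j=0,\dots,N$. The result is a function $g\in\Bd(\rn)$ with $g|_\Omega=f$ and $\|g|\Bd(\rn)\|\lesssim \eqref{equiv-norm-1}$, whence $\|f|\Bd(\Omega)\|\lesssim\eqref{equiv-norm-1}$ by the restriction characterization.

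For the reverse estimate, take any $g\in\Bd(\rn)$ with $g|_\Omega=f$. Then $\varphi_0 f=\varphi_0 g$ (on all of $\rn$, extended by zero outside $K_0$), so $\|\varphi_0 f|\Bd(\rn)\|\lesssim\|g|\Bd(\rn)\|$ by Proposition \ref{prop-mult-diffeo}(ii) again. Similarly $\varphi_j f=\varphi_j g$ near $\Gamma$, and since $\psi^{(j)}$ extends to a Lipschitz diffeomorphism of $\rn$, the function $(\varphi_j g)\circ(\psi^{(j)})^{-1}$ lies in $\Bd(\rn)$ with norm $\lesssim\|g|\Bd(\rn)\|$; its restriction to $\rn_+$ is exactly $(\varphi_j f)\circ(\psi^{(j)})^{-1}$, so $\|(\varphi_j f)\circ(\psi^{(j)})^{-1}|\Bd(\rn_+)\|\lesssim\|g|\Bd(\rn)\|$. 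Summing over $j$ and taking the infimum over all extensions $g$ yields $\eqref{equiv-norm-1}\lesssim\|f|\Bd(\Omega)\|$.

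The main obstacle I anticipate is bookkeeping the geometry of the covering rather than any deep analytic difficulty: one must check that the supports behave correctly (that $\psi^{(j)}$ maps $K_j\cap\Omega$ into $\rn_+$, that the cut-offs $\Psi_j$ dominate $\varphi_j$, and that extension by zero across $\Gamma$ of $\varphi_j f$-type pieces is harmless because the supports stay inside the coordinate patches), and that the multiplier and diffeomorphism bounds of Proposition \ref{prop-mult-diffeo} apply with a fixed $\sigma\in(s,1)$. A secondary technical point is the passage between $\Bd(\rn_+)$ and $\Bd(\rn)$ for the half-space pieces, which requires either invoking a half-space extension operator or arguing directly via the restriction definition of $\Bd(\rn_+)$; since $\rn_+$ is itself an $(\varepsilon,\delta)$-domain this is available, but it should be stated carefully.
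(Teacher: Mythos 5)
Your argument is correct and follows essentially the same route as the paper: localize with the resolution of unity, apply the multiplier and diffeomorphism results of Proposition \ref{prop-mult-diffeo}, and take the infimum over extensions $g\in\Bd(\rn)$ of $f$. The only organizational difference is that you prove the two inequalities separately and spell out the gluing of the half-space pieces into a global extension (the passage between $\Bd(\rn_+)$ and $\Bd(\rn)$), which the paper's proof compresses into its final sentence about taking the infimum over admissible $g$.
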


\begin{proof}
 Let $\Omega_1$ be a bounded domain with
\[
{\overline{\Omega}_1}\subset\left\{x\in\rn: \sum_{j=0}^N\varphi_j(x)=1\right\}
\]
and $\overline{\Omega}\subset \Omega_1$. Let $f\in\Bd(\Omega)$. If we restrict the infimum in \eqref{extr_char} to $g\in\Bd(\rn)$ with
\beq\label{cond-g}
g\big|_\Omega=f\qquad \text{and}\qquad \supp g\subset \Omega_1,
\eeq
then we obtain a new equivalent quasi-norm in $\Bd(\Omega)$. This follows from Proposition \ref{prop-mult-diffeo}(ii) if one multiplies an arbitrary element $g\in\Bd(\rn)$ with a fixed infinitely differentiable function $\varkappa(x)$ with 
\[
\varkappa(x)=1\quad\text{if}\quad x\in\Omega \qquad \text{and}\qquad \supp\varkappa\subset \Omega_1.
\]
For elements $g\in\Bd(\rn)$ with \eqref{cond-g}, 
\[
\sum_{k=0}^N\|\varphi_kg|\Bd(\rn)\|
\] 
is an equivalent quasi-norm. This is also a consequence of Proposition \ref{prop-mult-diffeo}(ii). Applying part (i) of that proposition to $g(x)\rightarrow g(\psi^{(j)}(x))$, we see that
\[
\|\varphi_0 g|\Bd(\rn)\|+\sum_{k=1}^N \|(\varphi_k g)(\psi^{(k)}(\cdot))^{-1}|\Bd(\rn)\|
\]
is an equivalent quasi-norm for all $g\in\Bd(\rn)$ with \eqref{cond-g}. But the infimum over all admissible $g$ with \eqref{cond-g} yields \eqref{equiv-norm-1}.
\end{proof}

\section{Trace results on Lipschitz domains}

Now we can look for traces of $f\in \Bd(\Omega)$ on the boundary $\Gamma$.
We briefly explain our understanding of the trace operator since
when dealing with $L_p(\R^n)$ functions the pointwise trace has no obvious meaning. \\ 
Let $Y(\Gamma)$ denote one of the spaces $\mathbf{B}^\sigma_{u,v}(\Gamma)$ or $L_u(\Gamma)$. 
Since $\ensuremath{\mathcal{S}(\Omega)}$ is dense in $\Bd(\Omega)$ for $0<p,q<\infty$ (both spaces can be interpreted 
as restrictions of their counterparts defined on $\rn$), one asks first whether there is a constant $c>0$ such that
\beq\label{trace-02}
\|\tr \varphi|Y(\Gamma)\|\leq c\|\varphi|\Bd(\Omega)\| \quad \text{for all }\varphi\in \ensuremath{\mathcal{S}(\Omega)},
\eeq
where $\ensuremath{\mathcal{S}(\Omega)}$ stands for the restriction of the Schwartz space $\ensuremath{\mathcal{S}(\rn)}$ to a domain 
$\Omega$. If this is the case, then one defines $\tr f\in Y(\Gamma)$ for $f\in \mathbf{B}^s_{p,q}(\Omega)$ 
by completion and obtains
\[
\|\tr f|Y(\Gamma)\|\leq c\|f|\Bd(\Omega)\|, \quad f\in \mathbf{B}^s_{p,q}(\Omega),
\]
for the linear and bounded trace operator\index{trace operators!on the boundary}
\[
\tr: \mathbf{B}^s_{p,q}(\Omega)\hookrightarrow Y(\Gamma).
\]

\remark{\label{trace_infty}
We can extend \eqref{trace-02} to spaces $\Bd(\Omega)$ with $p=\infty$ and/or $q=\infty$ by using embeddings for 
B- and F-spaces from \cite{HS08, sch08a}. The results stated there can be generalized to domains $\Omega$, 
since the spaces $\Bd(\Omega)$ are defined by restriction of the corresponding spaces on $\rn$, cf. Remark \ref{extr_char}.\\
If $p=\infty$, we have that $\mathbf{B}^s_{\infty,q}(\Omega)$ with $s>0$ is embedded in the space of continuous functions and $\tr$
makes sense pointwise.
If $q=\infty$,
\[
\mathbf{B}^s_{p,\infty}(\Omega)\hookrightarrow \mathbf{B}^{s-\varepsilon}_{p,1}(\Omega) \qquad \text{for any}\quad \varepsilon>0.
\]
Let $s>\frac 1p$ and $\varepsilon>0$ be small enough such that one has
\[
s>s-\varepsilon>\frac 1p.
\]
Since by \cite[Rem.~13]{tri08} traces are independent of the source spaces and of the target spaces one can now define $\tr$ for 
$\mathbf{B}^s_{p,\infty}(\Omega)$ by restriction of $\tr$  for $\mathbf{B}^{s-\varepsilon}_{p,1}(\Omega)$ to 
$\mathbf{B}^s_{p,\infty}(\Omega)$. Hence \eqref{trace-02} is always meaningful.
}

\subsection{Boundedness of the trace operator}

Now we are able to state and prove our first main theorem concerning traces of Besov spaces on Lipschitz domains.

\begin{theorem}\label{B-trace-1}
Let $n\geq 2$, $0<p,q\leq\infty$, $0<s<1$, and let $\Omega$ be a bounded Lipschitz domain in $\rn$ with boundary $\Gamma$. Then
the operator
\beq\label{trace-th1}
\tr: \mathbf{B}^{s+\frac 1p}_{p,q}(\Omega)\longrightarrow\mathbf{B}^{s}_{p,q}(\Gamma)
\eeq
is linear and bounded.
\end{theorem}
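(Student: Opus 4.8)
The plan is to reduce the trace estimate \eqref{trace-02} to a local computation on a single boundary chart and then exploit the non-smooth atomic decompositions developed above. By Proposition \ref{equ-norm-dom} and the resolution of unity \eqref{rou-2}, it suffices to estimate $\|\tr((\varphi_j f)\circ(\psi^{(j)})^{-1})|\mathbf{B}^s_{p,q}(\mathbb{R}^{n-1})\|$ in terms of $\|(\varphi_j f)\circ(\psi^{(j)})^{-1}|\mathbf{B}^{s+1/p}_{p,q}(\mathbb{R}^n_+)\|$, after pulling back via the local Lipschitz diffeomorphisms. Because Proposition \ref{prop-mult-diffeo}(i) guarantees that Lipschitz diffeomorphisms act boundedly on the relevant $\mathbf{B}$-spaces (both on $\mathbb{R}^{n-1}$ for the target and, suitably interpreted, on the domain side), we may reduce to a model situation: a function $g\in\mathbf{B}^{s+1/p}_{p,q}(\mathbb{R}^n_+)$ with compact support in a fixed ball, whose trace $\tr g = g(\cdot,0)$ we wish to control in $\mathbf{B}^s_{p,q}(\mathbb{R}^{n-1})$.

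First I would take an optimal non-smooth (say $\Lip$-) atomic decomposition $g=\sum_{j\ge 0}\sum_{m\in\zn}\lambda_{j,m}a_{j,m}$ of an extension of $g$ to $\mathbb{R}^n$ according to Corollary \ref{th:atom-dec-3} (using Remark \ref{extr_char} to pass between domain and $\rn$), with $\|\lambda|b^{s+1/p}_{p,q}\|\sim \|g|\mathbf{B}^{s+1/p}_{p,q}(\mathbb{R}^n_+)\|$. The key geometric point is that restricting an $n$-dimensional $\Lip$-atom $a_{j,m}$ supported in $dQ_{j,m}\subset\rn$ to the hyperplane $x_n=0$ yields either zero (if $dQ_{j,m}$ does not meet the hyperplane) or a function supported in an $(n-1)$-dimensional cube of side $\sim 2^{-j}$ satisfying exactly the $\Lip$-atom bounds in dimension $n-1$: $|a_{j,m}(x',0)|\le 1$ and $|a_{j,m}(x',0)-a_{j,m}(y',0)|\le 2^j|x'-y'|$. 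Thus $\tr g = \sum_{j,m'}\mu_{j,m'}\tilde a_{j,m'}$, where for each $(j,m')$ the new coefficient $\mu_{j,m'}$ collects the $\lambda_{j,m}$ over the boundedly many $m$ with $m'$ as first coordinates and $dQ_{j,m}\cap\{x_n=0\}\ne\emptyset$, and $\tilde a_{j,m'}$ is a renormalized $(n-1)$-dimensional $\Lip$-atom. Applying Corollary \ref{th:atom-dec-3} in dimension $n-1$, it remains to show $\|\mu|b^s_{p,q}(\mathbb{R}^{n-1})\|\lesssim \|\lambda|b^{s+1/p}_{p,q}(\mathbb{R}^n)\|$.

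The heart of the matter is this sequence-space inequality, and it is where the shift $s\mapsto s+\frac1p$ enters. For fixed $j$, summing $|\mu_{j,m'}|^p\lesssim \sum_{m: (m,m')}|\lambda_{j,m}|^p$ over $m'\in\mathbb{Z}^{n-1}$ just re-sums the $n$-dimensional coefficients restricted to the boundedly many layers $m_n$ meeting the support region, giving $\sum_{m'}|\mu_{j,m'}|^p\lesssim \sum_{m\in\zn}|\lambda_{j,m}|^p$ (only finitely many $m_n$ contribute because $g$ has compact support in the normal direction, or more robustly because each atom touches the hyperplane for a bounded number of $m_n$). Then the normalization exponents in Definitions \ref{def-seq-dom} and \ref{def-seq-gamma} differ precisely by the factor $2^{j(s-\frac{n-1}{p})q}$ versus $2^{j((s+1/p)-\frac np)q}=2^{j(s-\frac{n-1}{p})q}$ — they coincide. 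So $\|\mu|b^s_{p,q}(\mathbb{R}^{n-1})\|\le c\,\|\lambda|b^{s+1/p}_{p,q}(\mathbb{R}^n)\|$ follows immediately, term by term in $j$, with the constant absorbing the bounded number of contributing normal layers.

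The main obstacle, and the step I would be most careful about, is the handling of the extension from $\mathbb{R}^n_+$ to $\mathbb{R}^n$ together with the diffeomorphism transport: the boundary $\Gamma$ is only Lipschitz, so after pulling back by $\psi^{(j)}$ the model hyperplane is flat but the domain-side atoms live on a genuinely non-smooth domain, and one must check that the restriction-to-hyperplane operation on atoms still makes sense and that the "bounded number of contributing $m$" claim survives the Lipschitz change of variables (this is exactly why non-smooth atoms are essential — smooth atoms would be destroyed by the bi-Lipschitz map). A secondary technical point is convergence of the resulting series in $L_p(\Gamma)$, which is handled as in Step 3 of the proof of Theorem \ref{th:at-bd-1} via the embeddings $b^s_{p,q}\hookrightarrow b^0_{p,\min(1,p)}$. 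The cases $p=\infty$ and/or $q=\infty$ require only the notational adjustments already indicated in Remark \ref{trace_infty}.
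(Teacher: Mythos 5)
Your core computation is exactly the paper's mechanism: decompose $f$ optimally into atoms, restrict the atoms to the boundary to obtain $(n-1)$-dimensional Lipschitz atoms, and note that the sequence-space normalizations match because $(s+\tfrac1p)-\tfrac np=s-\tfrac{n-1}{p}$, with the boundedly many ``normal layers'' absorbed by renormalization. The problem lies in how you set up the reduction. You route the estimate through $\|(\varphi_j f)\circ(\psi^{(j)})^{-1}|\mathbf{B}^{s+1/p}_{p,q}(\rn_+)\|$, invoking Proposition \ref{equ-norm-dom} and Proposition \ref{prop-mult-diffeo}(i). Both are stated and proved only for smoothness strictly below $1$ (their proofs run through $\Lip$-atoms via Corollary \ref{th:atom-dec-3}), whereas you need them at smoothness $s+\tfrac1p$, which is $\ge 1$ whenever $p\le\frac1{1-s}$ --- in particular for all $p\le 1$, the very range that is the point of the theorem. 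This is not a technicality: pullback by a merely bi-Lipschitz chart does not preserve Besov smoothness above $1$. For instance, on $\R$ a compactly supported Lipschitz $g$ with $g'=\mathrm{sign}(\sin(Nx))$ on an interval satisfies $\|g|\Lip(\R)\|\sim 1$ but $\|g|\mathbf{B}^{3/2}_{1,1}(\R)\|\gtrsim N^{1/2}$, so $h\mapsto h\circ\psi^{-1}$ is unbounded on $\mathbf{B}^{3/2}_{1,1}$ for suitable bi-Lipschitz $\psi$; hence the quantity \eqref{equiv-norm-1} with $s$ replaced by $s+\tfrac1p>1$ is not an equivalent quasi-norm and your model case does not control $\|f|\mathbf{B}^{s+1/p}_{p,q}(\Omega)\|$. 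A secondary slip of the same origin: Corollary \ref{th:atom-dec-3} cannot produce a $\Lip$-atomic decomposition of $g$ in $\mathbf{B}^{s+1/p}_{p,q}$ when $s+\tfrac1p\ge 1$; you would have to use the smooth $K$-atom decomposition of Definition \ref{intr_char}, whose atoms are in particular Lipschitz.

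The repair is to reverse the order of operations, which is what the paper does: take the optimal smooth atomic decomposition of $f$ in $\mathbf{B}^{s+1/p}_{p,q}(\Omega)$ \emph{without any change of variables}, restrict each $K$-atom ($K\ge 1$) directly to the curved boundary $\Gamma$ --- the restriction is automatically a $\Lip^\Gamma$-atom because the bound \eqref{lip-gamma-cond} is measured in the ambient Euclidean metric, so no chart is needed at the high smoothness level --- and then apply Theorem \ref{th:at-bd-1} together with the term-by-term inequality $\|\lambda|b^s_{p,q}(\Gamma)\|\le\|\lambda|b^{s+1/p}_{p,q}(\Omega)\|$. The flattening by $\psi^{(j)}$ then occurs only inside Theorem \ref{th:at-bd-1}, i.e., at the level of the boundary space of smoothness $s<1$, where Lipschitz invariance is legitimate. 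You correctly flagged the diffeomorphism transport as ``the main obstacle'' but did not resolve it; as written your argument proves the theorem only for $s+\tfrac1p<1$. Finally, besides the $L_p(\Gamma)$-convergence you mention, one should also record (as the paper does) that for smooth $f$ the optimal decomposition converges pointwise, so that the atomically defined restriction agrees with the trace operator defined by completion.
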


\begin{proof}
The linearity of the operator follows directly from its definition as discussed above.
To prove the boundedness, we take an optimal representation of a smooth function $f\in \mathbf{B}^{s+\frac 1p}_{p,q}(\Omega)$ as described in \eqref{repr_01'}, i.e.,
\beq\label{sums-sense'}
f=\sum_{j=0}^{\infty}{\sum_{m\in \zn}}^{j,\Omega}\lambda_{j,m}a_{j,m} \quad \text{with}\quad \|f|\mathbf{B}^{s+\frac 1p}_{p,q}(\Omega)\|\sim
\|\lambda|b^{s+\frac 1p}_{p,q}(\Omega)\|.
\eeq
We put
\beq\label{sums-sense}
\tr f:=\left({\sum_{j,m}}^{j,\Omega}\lambda_{j,m}a_{j,m}\right)\Bigg|_{\Gamma}=
{\sum_{j,m}}^{j,\Gamma}\lambda_{j,m}a_{j,m}\Big|_{\Gamma}={\sum_{j,m}}^{j,\Gamma}\lambda_{j,m}a_{j,m}^{\Gamma}.
\eeq
The proof follows by Theorem \ref{th:at-bd-1} and the following four facts:
\begin{enumerate}
\item[(i)] $a_{j,m}^{\Gamma}$ are $\Lip^{\Gamma}$-atoms,
\item[(ii)] $\|\lambda|b^{s}_{p,q}(\Gamma)\|\lesssim \|\lambda|b^{s+\frac 1p}_{p,q}(\Omega)\|$,
\item[(iii)] the decomposition \eqref{sums-sense} converges in $L_p(\Gamma)$,
\item[(iv)] the trace operator $\tr$ coincides with the trace operator discussed above.
\end{enumerate}

To prove the first point, we observe that
\[
\supp a_{j,m}^{\Gamma}\subseteq \supp a_{j,m}\cap \Gamma \subseteq Q_{j,m}^{\Gamma}.
\]
Furthermore, we have $\ \|a_{j,m}^{\Gamma}|L_{\infty}(\Gamma)\|\leq \|a_{j,m}|L_{\infty}(dQ_{j,m})\|\leq c\ $ and
$$
\sup_{ x,y\in Q^{\Gamma}_{j,m}\atop x\neq y}\frac{a_{j,m}^{\Gamma}(x)-a_{j,m}^{\Gamma}(y)}{|x-y|}\leq \sup_{x,y\in dQ_{j,m}\atop x\neq y}
\frac{a_{j,m}(x)-a_{j,m}(y)}{|x-y|}\lesssim 2^{j}.
$$
The proof of the second point follows directly by
\begin{align*}
\|\lambda|b^{s}_{p,q}(\Gamma)\|&=\left(\sum_{j}2^{j(s-\frac{n-1}{p})q}\left({\sum_{m}}^{j,\Gamma}|\lambda_{j,m}|^p\right)^{q/p}\right)^{1/p
}\\
&\le\left(\sum_{j}2^{j\left[(s+\frac 1p)-\frac{n}{p}\right]q}\left({\sum_{m}}^{j,\Omega}|\lambda_{j,m}|^p\right)^{q/p}\right)^{1/p}
=\|\lambda|b^{s+\frac 1p}_{p,q}(\Omega)\|.
\end{align*}
The proof of the third point follows  in the same way as the proof in Step 3 of Theorem \ref{th:at-bd-1}. 

The proof of (iv) is based on the fact that for $f\in{\mathcal S}(\Omega)$ there is an optimal atomic decomposition \eqref{sums-sense'}
which converges also pointwise. This may be observed by a detailed inspection of \cite{H-N}. Therefore also the series \eqref{sums-sense}
converges pointwise and the trace operator $\tr$ may be understood in the pointwise sense for smooth $f$.
\end{proof}

\subsection{Extension of atoms}

In order to compute the exact trace space we still need to construct an extension operator
\[
\mathit{Ext}: \mathbf{B}^{s}_{p,q}(\Gamma)\longrightarrow \mathbf{B}^{s+\frac 1p}_{p,q}(\Omega)
\]
and show its boundedness. The main problem will be to show that we can extend the $\Lip^{\Gamma}$-atoms from the source
spaces in a nice  way to obtain suitable atoms for the target spaces. We start with a simple variant of the
Gagliardo-Nirenberg inequality, cf. \cite[Chapter 5]{P76}.

\begin{lemma}\label{lem1}
Let $0<s_0,s_1<\infty$, $0<p_0,p_1,q_0,q_1\le \infty$ and $0<\theta<1$. Put
\begin{equation}
s=(1-\theta)s_0+\theta s_1, \quad \frac{1}{p}=\frac{1-\theta}{p_0}+\frac{\theta}{p_1}, \quad
\frac{1}{q}=\frac{1-\theta}{q_0}+\frac{\theta}{q_1}.
\end{equation}
Then
\begin{equation}
\|f|\mathbf {B}^s_{p,q}(\Omega)\|\lesssim
\|f|\mathbf {B}^{s_0}_{p_0,q_0}(\Omega)\|^{1-\theta}\cdot \|f|\mathbf {B}^{s_1}_{p_1,q_1}(\Omega)\|^\theta
\end{equation}
for all $f\in \mathbf{B}^{s_0}_{p_0,q_0}(\Omega) \cap \mathbf{B}^{s_1}_{p_1,q_1}(\Omega)$.
\end{lemma}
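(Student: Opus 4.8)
The plan is to deduce this from the interpolation result already available on $\Omega$, namely Theorem~\ref{th:interpol-B-omega}, combined with the standard relation between the real interpolation method and the $K$-functional. First I would recall that for a compatible couple $(X_0,X_1)$ and $0<\theta<1$, the space $(X_0,X_1)_{\theta,q}$ is continuously embedded in $(X_0,X_1)_{\theta,\infty}$, and that $\|f|(X_0,X_1)_{\theta,\infty}\| = \sup_{t>0} t^{-\theta}K(f,t,X_0,X_1)$. Applying this with $X_i=\mathbf{B}^{s_i}_{p_i,q_i}(\Omega)$, Theorem~\ref{th:interpol-B-omega} gives
\[
\|f|\mathbf{B}^s_{p,q}(\Omega)\| \lesssim \|f|(X_0,X_1)_{\theta,q}(\Omega)\| \lesssim \|f|(X_0,X_1)_{\theta,\infty}(\Omega)\| = \sup_{t>0} t^{-\theta}K(f,t,X_0,X_1),
\]
provided the parameters fall under case (i) or (ii) of that theorem.

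Next I would exploit the elementary bound $K(f,t,X_0,X_1)\le \|f|X_0\|$ and $K(f,t,X_0,X_1)\le t\|f|X_1\|$, valid from the very definition $K(f,t,X_0,X_1)=\inf_{f=f_0+f_1}(\|f_0|X_0\|+t\|f_1|X_1\|)$ by taking the trivial splittings $f=f+0$ and $f=0+f$. Hence $t^{-\theta}K(f,t)\le \min\bigl(t^{-\theta}\|f|X_0\|,\ t^{1-\theta}\|f|X_1\|\bigr)$ for every $t>0$. Optimizing over $t$ — the two bounds cross at $t=\|f|X_0\|/\|f|X_1\|$ — yields exactly $\sup_{t>0}t^{-\theta}K(f,t)\le \|f|X_0\|^{1-\theta}\|f|X_1\|^\theta$, which is the desired inequality.

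The one genuine obstacle is that Theorem~\ref{th:interpol-B-omega} as stated restricts the smoothness parameters to $0<s_i<1$ and requires $s_0\neq s_1$, whereas Lemma~\ref{lem1} is phrased for all $0<s_0,s_1<\infty$ and does not a priori exclude $s_0=s_1$. If $s_0=s_1$ the statement is an easy direct consequence of the quasi-triangle inequality and the monotonicity of $\ell_q$-norms (or simply H\"older on the sequence level), so that case can be dispatched separately. For $s_0,s_1\ge 1$, or for the case $p\neq q$ with $p\neq q$ outside case (ii), one cannot cite Theorem~\ref{th:interpol-B-omega} directly; here I would instead run the $K$-functional argument on $\rn$ after applying the (nonlinear) extension operator $\Ex$ of \cite{dVS93}, using the equivalence $K(f,t,X_0(\Omega),X_1(\Omega))\sim K(\Ex f,t,X_0(\rn),X_1(\rn))$ recorded in \eqref{est-interpol}, together with the corresponding Gagliardo--Nirenberg inequality on $\rn$ (which is classical, cf.\ \cite[Chapter~5]{P76}). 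This reduces everything to the Euclidean statement, where no restriction on the $s_i$ is needed.

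In summary: the steps in order are (1) dispose of the trivial case $s_0=s_1$; (2) for distinct $s_i$, either invoke Theorem~\ref{th:interpol-B-omega} directly or transfer to $\rn$ via $\Ex$ and \eqref{est-interpol}; (3) embed $(X_0,X_1)_{\theta,q}\hookrightarrow(X_0,X_1)_{\theta,\infty}$; (4) bound the $K$-functional by the two trivial splittings and optimize in $t$. The only point requiring care is matching the parameter ranges of the cited interpolation theorem to the full generality claimed in the lemma, which is why the transfer-to-$\rn$ route via the extension operator is the safer backbone of the argument.
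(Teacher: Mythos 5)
Your central display contains a genuine error: the middle inequality runs the wrong way. The embedding $(X_0,X_1)_{\theta,q}\hookrightarrow(X_0,X_1)_{\theta,\infty}$ that you cite gives $\|f|(X_0,X_1)_{\theta,\infty}\|\lesssim\|f|(X_0,X_1)_{\theta,q}\|$, i.e.\ the $\sup$-norm is the \emph{smaller} one; for $q<\infty$ the quantity $\bigl(\int_0^\infty (t^{-\theta}K(f,t))^q\,\frac{dt}{t}\bigr)^{1/q}$ is certainly not controlled by $\sup_{t>0}t^{-\theta}K(f,t)$ (the measure $dt/t$ is infinite). The classical fact you actually need is the embedding $X_0\cap X_1\hookrightarrow (X_0,X_1)_{\theta,q}$ with the multiplicative bound $\|f|(X_0,X_1)_{\theta,q}\|\le c_{\theta,q}\,\|f|X_0\|^{1-\theta}\|f|X_1\|^{\theta}$, which one proves by inserting $K(f,t)\le\min(\|f|X_0\|,\,t\|f|X_1\|)$ into the \emph{integral} and splitting it at $t=\|f|X_0\|/\|f|X_1\|$; both pieces converge precisely because $0<\theta<1$. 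So steps (3)--(4) must be replaced by this computation; as written the chain of inequalities is broken. A second problem is coverage: even after that repair, Theorem \ref{th:interpol-B-omega} requires $s_0\neq s_1$, $0<s_i<1$, and $p=q$ when $p_0\neq p_1$, whereas the case in which Lemma \ref{lem1} is actually applied (in Lemma \ref{ext-atom}) is $s_0=s_1=s'$ with $p_0\neq p_1$ --- exactly the case excluded by the interpolation theorem and the one you dismiss as ``trivial''. It is not handled by the quasi-triangle inequality or $\ell_q$-monotonicity when $p_0\neq p_1$; one genuinely needs H\"older there.

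The paper's own proof is far more direct and avoids all of these case distinctions: using the characterization of $\mathbf{B}^s_{p,q}(\Omega)$ by differences, apply the log-convexity of $L_p$-quasi-norms (valid for all $0<p_0,p_1\le\infty$ by H\"older applied to $|g|^{(1-\theta)p}|g|^{\theta p}$) to $f$ itself and to $\Delta_h^r f(\cdot,\Omega)$, which yields $\omega_r(f,t,\Omega)_p\le \omega_r(f,t,\Omega)_{p_0}^{1-\theta}\,\omega_r(f,t,\Omega)_{p_1}^{\theta}$, hence $t^{-s}\omega_r(f,t,\Omega)_p\le (t^{-s_0}\omega_r(f,t,\Omega)_{p_0})^{1-\theta}(t^{-s_1}\omega_r(f,t,\Omega)_{p_1})^{\theta}$; then apply H\"older once more in $L_q(dt/t)$ with exponents $q_0/((1-\theta)q)$ and $q_1/(\theta q)$. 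This gives the lemma for all admissible parameters simultaneously, directly on $\Omega$, with no interpolation theorem, no $K$-functional, and no extension operator. Your fallback route through $\Ex$ and the ``classical'' Gagliardo--Nirenberg inequality on $\rn$ ultimately rests on this same H\"older argument, so the detour through interpolation buys nothing here.
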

\begin{proof}
The straightforward proof uses the characterization of ${B}$-spaces through differences and H\"older's inequality.
\end{proof}

Our approach is based on the classical Whitney decomposition of $\R^n\setminus \Gamma$ and the corresponding
decomposition of unity. We summarize the most important properties of this method in the next Lemma and 
refer to \cite[pp.167-170]{stein} and \cite[pp.21-26]{JW84} for details and proofs.

\begin{lemma}\label{lem:whitney}
1. Let $\Gamma\subset\R^n$ be a closed set. Then there exists a collection of cubes $\{Q_i\}_{i\in\N}$, such that
\begin{enumerate}
\item[(i)] $\R^n\setminus \Gamma=\bigcup_{i}Q_i$.
\item[(ii)] The interiors of the cubes are mutually disjoint.
\item[(iii)] The inequality
$$
{\rm diam}\ Q_i\le {\rm dist}\,(Q_i,\Gamma)\le 4\,{\rm diam}\ Q_i
$$
holds for every cube $Q_i$. Here ${\rm diam}\ Q_i$ is the diameter of $Q_i$ and ${\rm dist}\, (Q_i,\Gamma)$
is its distance from $\Gamma.$
\item[(iv)] Each point of $\R^n\setminus\Gamma$ is contained in at most $N_0$ cubes $6/5\cdot Q_i$, where $N_0$
depends only on $n$.
\item[(v)] If $\Gamma$ is the boundary of a Lipschitz domain then 
there is a number $\gamma>0$, which depends only on $n$, such that $\sigma(\gamma Q_i\cap \Gamma)>0$
for all $i\in\N.$
\end{enumerate}
2. The are $C^\infty$-functions $\{\psi_i\}_{i\in\N}$ such that
\begin{enumerate}
\item[(i)] $\sum_i \psi_i(x)=1$ for every $x\in\R^n\setminus \Gamma$.
\item[(ii)] ${\rm supp}\,\psi_i\subset 6/5\cdot Q_i$.
\item[(iii)] For every $\alpha\in\N_0^n$ there is a constant $A_\alpha$ such that
$|D^\alpha\psi_i(x)|\le A_\alpha ({\rm diam}\, Q_i)^{-|\alpha|}$ holds for all $i\in\N$ and all $x\in\R^n.$
\end{enumerate}
\end{lemma}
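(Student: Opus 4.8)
The statement is, with the single exception of item 1(v), the classical \emph{Whitney decomposition} of an open set together with its associated smooth partition of unity, exactly as in \cite[pp.~167--170]{stein}; item 1(v) is the additional feature special to boundaries of Lipschitz domains. Accordingly the plan is to recall Stein's construction, read off (i)--(iv) and part~2 from it, and then add the short geometric and measure-theoretic argument needed for 1(v).

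For the construction I would fix, for each $k\in\Z$, the mesh $\mathcal M_k$ of closed dyadic cubes of side $2^{-k}$ (hence of diameter $\sqrt n\,2^{-k}$) and introduce the dyadic annuli
\[
\Omega_k=\bigl\{x\in\R^n\setminus\Gamma:\ 2\sqrt n\,2^{-k}<\mathrm{dist}\,(x,\Gamma)\le 2\sqrt n\,2^{-(k-1)}\bigr\},\qquad k\in\Z ,
\]
which cover $\R^n\setminus\Gamma$ since $\Gamma$ is closed. Putting $\mathcal F_0=\{Q\in\mathcal M_k:\ Q\cap\Omega_k\neq\emptyset,\ k\in\Z\}$ and estimating $\mathrm{dist}\,(Q,\Gamma)$ from above and below through a point $x\in Q\cap\Omega_k$ yields at once the two-sided bound in (iii) for every $Q\in\mathcal F_0$. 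Since any two dyadic cubes are either nested or interior-disjoint, and since (iii) forces the chain of $\mathcal F_0$-ancestors of a fixed $Q\in\mathcal F_0$ to have length at most two (a strict dyadic ancestor $Q'\supsetneq Q$ lying in $\mathcal F_0$ satisfies $\mathrm{diam}\,Q'\le\mathrm{dist}\,(Q',\Gamma)\le\mathrm{dist}\,(Q,\Gamma)\le 4\,\mathrm{diam}\,Q$), every $Q\in\mathcal F_0$ is contained in a unique maximal member of $\mathcal F_0$; taking $\{Q_i\}$ to be these maximal cubes gives (i), (ii), and preserves (iii). Property (iv) then follows from (iii) alone: if $x\in\tfrac65 Q_i$ for several $i$, all the corresponding $\mathrm{diam}\,Q_i$ are comparable (up to absolute constants) to $\mathrm{dist}\,(x,\Gamma)$ and the interior-disjoint cubes $Q_i$ lie in a fixed multiple of $B(x,\mathrm{dist}\,(x,\Gamma))$, so a volume count bounds their number by some $N_0=N_0(n)$.

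For the new item 1(v) I would argue as follows. By (iii), the centre $c_i$ of $Q_i$ satisfies $\mathrm{dist}\,(c_i,\Gamma)\le\mathrm{diam}\,Q_i+\mathrm{dist}\,(Q_i,\Gamma)\le 5\,\mathrm{diam}\,Q_i$, so we may pick $p_i\in\Gamma$ with $|c_i-p_i|\le 5\,\mathrm{diam}\,Q_i$. Choosing $\gamma=\gamma(n)$ large enough that the cube $\gamma Q_i$ (still centred at $c_i$) contains $B(c_i,6\,\mathrm{diam}\,Q_i)$ — concretely $\gamma=12\sqrt n$ — we get the inclusion $\gamma Q_i\supset B(p_i,\mathrm{diam}\,Q_i)$. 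Since $\Gamma$ is the boundary of a bounded Lipschitz domain, it is a compact Lipschitz $(n-1)$-manifold and is, via bi-Lipschitz charts (recall Definition~\ref{def-lip-1}), Ahlfors $(n-1)$-regular; in particular $\sigma$ has full support on $\Gamma$, so $\sigma\bigl(B(p_i,\mathrm{diam}\,Q_i)\cap\Gamma\bigr)>0$ because $p_i\in\Gamma$. Hence $\sigma(\gamma Q_i\cap\Gamma)>0$ for every $i$, with $\gamma$ depending only on $n$.

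Finally, for part~2 I would fix one reference bump $\varphi\in C^\infty_0(\R^n)$ supported in $\tfrac65 Q$ with $\varphi\equiv1$ on $Q=(-\tfrac12,\tfrac12)^n$, rescale and translate it to each $Q_i$ to obtain $\psi_i^\ast$ with $\supp\psi_i^\ast\subset\tfrac65 Q_i$, $\psi_i^\ast\equiv1$ on $Q_i$, and $|D^\alpha\psi_i^\ast|\le A_\alpha(\mathrm{diam}\,Q_i)^{-|\alpha|}$, set $\Phi=\sum_i\psi_i^\ast$ (a locally finite, hence $C^\infty$, sum with $1\le\Phi\le N_0$ on $\R^n\setminus\Gamma$ by (i) and (iv)), and put $\psi_i=\psi_i^\ast/\Phi$; the derivative bounds in 2(iii) transfer since on $\tfrac65 Q_i$ every $Q_{i'}$ with $\tfrac65 Q_{i'}\cap\tfrac65 Q_i\neq\emptyset$ has $\mathrm{diam}\,Q_{i'}\sim\mathrm{diam}\,Q_i$, so $|D^\alpha\Phi|\lesssim(\mathrm{diam}\,Q_i)^{-|\alpha|}$ there and the quotient rule closes the estimate. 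Everything except 1(v) is classical bookkeeping; the one point requiring an actual idea — and hence the main obstacle — is 1(v), where the crux is the combination of the geometric fact that a Whitney cube is never far from $\Gamma$ relative to its own size (property (iii)) with the positivity of the Lipschitz surface measure on small boundary balls.
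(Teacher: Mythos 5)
Your proposal is correct: it is precisely the classical Whitney decomposition and partition of unity from Stein \cite[pp.~167--170]{stein} (and \cite[pp.~21--26]{JW84}), which is all the paper itself offers --- the lemma is stated there without proof, with exactly these references. Your added argument for 1(v), combining $\mathrm{dist}(c_i,\Gamma)\le 5\,\mathrm{diam}\,Q_i$ from (iii) with the fact that the surface measure of a Lipschitz boundary charges every ball centred on $\Gamma$, is sound and fills in the one non-classical item.
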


If $a$ is a Lipschitz function on the Lipschitz boundary $\Gamma$ of $\Omega$, then
the Whitney extension operator ${\rm Ext}$ is defined by
\begin{equation}\label{eq:ext:def}
{\rm Ext}\, a(x)=\begin{cases}a(x),&x\in\Gamma,\\
\sum_{i}\mu_{i}\psi_i(x),\quad &x\in \Omega,
\end{cases}
\end{equation}
where we use the notation of Lemma \ref{lem:whitney}
and $\mu_i:=\frac{1}{\sigma(\gamma Q_i\cap \Gamma)}\int_{\gamma Q_i\cap \Gamma}a(y)d\sigma(y)$ with the number 
$\gamma>0$ as described in Lemma \ref{lem:whitney}.
It satisfies $\tr\circ {\rm Ext}\, a=a$ for $a$ Lipschitz continuous on $\Gamma$. This follows directly from the celebrated
Whitney's extension theorem (cf. \cite[p. 23]{JW84}) as $\Gamma$ is a closed set if $\Omega$ is a bounded Lipschitz domain.

\begin{lemma}\label{lemext}
Let $a$ be a Lipschitz function on the Lipschitz boundary $\Gamma$ of $\Omega$.
Then ${\rm Ext}\, a\in C^\infty(\Omega)$ and
\begin{equation}\label{todo1}
\max_{|\alpha|=k} |D^\alpha {\rm Ext}\, a(x)|\le c_k \delta(x)^{1-k}\cdot\|a|{\rm Lip}(\Gamma)\|,\quad k\in\N, \quad x\in \Omega.
\end{equation}
Here, $\delta(x)$ is the distance of $x$ to $\Gamma$ and $c_k$ depends only on $k$ and $\Omega$.
\end{lemma}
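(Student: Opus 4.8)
The plan is to prove the pointwise derivative bound \eqref{todo1} by the standard Whitney-extension argument, localized near an arbitrary point $x\in\Omega$. First I would fix $x\in\Omega$ with $\delta(x)=\mathrm{dist}(x,\Gamma)>0$ and observe that, by the locality of the partition of unity in Lemma \ref{lem:whitney} (part 2(ii)) together with the diameter/distance comparison in part 1(iii)-(iv), only boundedly many cubes $Q_i$ (say those with $\tfrac65 Q_i\ni x$) contribute to the sum $\sum_i\mu_i\psi_i$ near $x$, and all such cubes satisfy $\mathrm{diam}\,Q_i\sim\delta(x)$ and $\mathrm{dist}(Q_i,\Gamma)\sim\delta(x)$. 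Since $\sum_i\psi_i\equiv 1$ on $\R^n\setminus\Gamma$, for $k\in\N$ we have $\sum_i D^\alpha\psi_i\equiv 0$ whenever $|\alpha|=k\ge 1$, so I can rewrite
\[
D^\alpha\,\mathrm{Ext}\,a(x)=\sum_i \mu_i\,D^\alpha\psi_i(x)=\sum_i(\mu_i-\mu_{i_0})\,D^\alpha\psi_i(x),
\]
where $i_0$ is the index of one fixed cube with $\tfrac65 Q_{i_0}\ni x$. This subtraction is the key device: it converts the bad factor $(\mathrm{diam}\,Q_i)^{-|\alpha|}$ from Lemma \ref{lem:whitney} 2(iii) into something controlled by the oscillation of $a$.

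Next I would estimate $|\mu_i-\mu_{i_0}|$. Both $\mu_i$ and $\mu_{i_0}$ are averages of $a$ over the sets $\gamma Q_i\cap\Gamma$ and $\gamma Q_{i_0}\cap\Gamma$ (nonempty of positive surface measure by part 1(v)); since $x$ lies in both $\tfrac65 Q_i$ and $\tfrac65 Q_{i_0}$ and the cubes have comparable size $\sim\delta(x)$, the sets $\gamma Q_i\cap\Gamma$ and $\gamma Q_{i_0}\cap\Gamma$ are contained in a common ball of radius $\lesssim\delta(x)$ centered at a boundary point. Hence for any $y\in\gamma Q_i\cap\Gamma$ and $z\in\gamma Q_{i_0}\cap\Gamma$ one has $|y-z|\lesssim\delta(x)$, and therefore
\[
|\mu_i-\mu_{i_0}|\le \sup_{y,z}|a(y)-a(z)|\le \|a|\mathrm{Lip}(\Gamma)\|\cdot\sup_{y,z}|y-z|\lesssim \delta(x)\,\|a|\mathrm{Lip}(\Gamma)\|.
\]
Combining this with $|D^\alpha\psi_i(x)|\le A_\alpha(\mathrm{diam}\,Q_i)^{-k}\lesssim \delta(x)^{-k}$ and the fact that at most $N_0$ terms are nonzero gives
\[
|D^\alpha\,\mathrm{Ext}\,a(x)|\lesssim N_0\cdot \delta(x)\,\|a|\mathrm{Lip}(\Gamma)\|\cdot \delta(x)^{-k}=c_k\,\delta(x)^{1-k}\,\|a|\mathrm{Lip}(\Gamma)\|,
\]
which is \eqref{todo1}. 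The smoothness $\mathrm{Ext}\,a\in C^\infty(\Omega)$ is immediate since on the open set $\Omega$ the sum $\sum_i\mu_i\psi_i$ is locally finite with each $\psi_i\in C^\infty$.

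I expect the only genuinely delicate point to be the geometric claim that the averaging sets $\gamma Q_i\cap\Gamma$ attached to all cubes touching a neighborhood of $x$ lie within distance $\lesssim\delta(x)$ of each other; this needs the Whitney properties 1(iii)-(iv) (comparable sizes and bounded overlap) plus 1(v) (positive surface measure so the averages $\mu_i$ are well defined), and one should be a little careful that the implied constants depend only on $n$, $\gamma$, $N_0$, and hence only on $\Omega$. Everything else -- the vanishing of $\sum_i D^\alpha\psi_i$, the derivative bounds on $\psi_i$, and the Lipschitz estimate on $a$ -- is routine. The case $k=0$ is not claimed in \eqref{todo1} (and would instead give the boundedness $\|\mathrm{Ext}\,a|L_\infty(\Omega)\|\le\|a|\mathrm{Lip}(\Gamma)\|$ directly from $\mu_i$ being averages of $a$), so the subtraction trick is exactly what is needed for all $k\ge 1$.
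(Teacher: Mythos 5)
Your proposal is correct and follows essentially the same route as the paper: exploit $\sum_i D^\alpha\psi_i\equiv 0$ to subtract a reference average $\mu_{i_0}$, bound $|\mu_i-\mu_{i_0}|\lesssim\delta(x)\,\|a|\Lip(\Gamma)\|$ via the Lipschitz continuity of $a$ and the comparability of all relevant cube sizes to $\delta(x)$, and combine with the derivative bounds on $\psi_i$ and the bounded overlap $N_0$. The only cosmetic difference is that the paper estimates $|\mu_i-\mu_j|$ by a three-term triangle inequality through points $x_i\in\gamma Q_i\cap\Gamma$, $x_j\in\gamma Q_j\cap\Gamma$, whereas you bound the difference of averages directly by $\sup_{y,z}|a(y)-a(z)|$ over the two averaging sets; both arguments are equivalent.
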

\begin{proof}
First, let us note that
$$
D^{\alpha} {\rm Ext}\, a(x)=\sum_{i}\mu_i D^\alpha\psi_i(x), \qquad x\in \Omega, \quad \alpha \in \N_0^n,\quad |\alpha|=k.
$$
By Lemma \ref{lem:whitney}  we have for every $x\in\Omega$
$$
|D^\alpha \psi_i(x)| \le c_k \delta(x)^{-k},\qquad |\alpha|=k,
$$
and
$$
\sum_i D^\alpha\psi_i(x)=D^\alpha \sum_i \psi_i(x)=0.
$$
Furthermore, the Lipschitz continuity of $a$ implies
\begin{equation}\label{eq:finalJV1}
|\mu_{i}-\mu_{j}|\lesssim \delta(x)\cdot \|a|{\rm Lip}(\Gamma)\|
\end{equation}
for $x\in \supp \psi_{i}\cap \supp\psi_{j}.$  
To justify \eqref{eq:finalJV1}, we consider natural numbers $i$ and $j$ with $x\in \supp \psi_{i}\cap \supp\psi_{j}$,
chose any $x_i\in \gamma Q_i\cap \Gamma$ and $x_j\in \gamma Q_j\cap \Gamma$ and calculate
\begin{align*}
|\mu_i-\mu_j|& \le
\left|\frac{1}{\sigma(\gamma Q_i\cap \Gamma)}\int_{\gamma Q_i\cap \Gamma}a(x)d\sigma(x)-a(x_i)\right|+
|a(x_i)-a(x_j)|+
\left|a(x_j)-\frac{1}{\sigma(\gamma Q_j\cap \Gamma)}\int_{\gamma Q_j\cap \Gamma}a(x)d\sigma(x)\right|\\
&\le \|a|{\rm Lip}(\Gamma)\|\cdot\left\{{\rm diam}(\gamma Q_i\cap \Gamma)+|x_i-x_j|+{\rm diam}(\gamma Q_j\cap \Gamma)\right\}\\
&\lesssim \|a|{\rm Lip}(\Gamma)\|\cdot\left\{{\rm diam}(Q_i)+|x_i-x|+|x-x_j|+{\rm diam}(Q_j)\right\}
\lesssim \delta(x)\cdot \|a|{\rm Lip}(\Gamma)\|.
\end{align*}

Let us now fix $x\in\Omega$ and let us denote
by $\{i_1,\dots,i_N\}$, $N\leq N_0$, the indices for which $x$ lies in the support of $\psi_{i}.$ Then we write
\begin{align*}
\left|\sum_{j=1}^N\mu_{{i_j}} D^\alpha\psi_{i_j}(x)\right|&\le
\left|\sum_{j=1}^N(\mu_{i_j}-\mu_{i_1}) D^\alpha\psi_{i_j}(x)\right|+\left|\sum_{j=1}^N\mu_{i_1} D^\alpha\psi_{i_j}(x)\right|\\
&\le \sum_{j=1}^N|\mu_{i_j}-\mu_{i_1}|\cdot|D^\alpha\psi_{i_j}(x)|\lesssim \delta(x)^{1-k}\cdot \|a|{\rm Lip}(\Gamma)\|.
\end{align*}
\end{proof}

\remark{
Let $a$ be a function defined on $\Gamma$ as in Lemma \ref{lemext} with ${\rm diam}\,(\supp a)\le 1$.
Then the extension operator from Lemma \ref{lemext} may be combined with a multiplication with a smooth cut-off function.
This ensures, that \eqref{todo1} still holds and, in addition, ${\rm diam}\,(\supp\,{\rm Ext}\, a)\lesssim 1$. 
}

The following lemma describes a certain geometrical property of Lipschitz domains, which shall be useful later on.
It resembles very much the notion of Minkowski content, cf. \cite{Fa90}.

\begin{lemma}\label{lem:geom}
Let $\Omega$ be a bounded Lipschitz domain and let $k\in\N$. Let $h\in\R^n$ with $0<|h|\le 1$ and put 
$\Omega^h=\{x\in\Omega: [x,x+kh]\subset\Omega\}$. Furthermore, for $j\in\N_0$ we define 
$\Omega^h_j=\{x\in\Omega^h: 2^{-j}\le \min_{y\in[x,x+kh]}\delta(y)\le 2^{-j+1}\}$, where $\delta(y)={\rm dist}(y,\Gamma)$.  Then
\begin{equation}\label{eq:geom1}
|\Omega^h_j|\lesssim 2^{-j}
\end{equation}
with a constant independent of $j$ and $h$.
\end{lemma}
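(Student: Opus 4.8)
The plan is to reduce the estimate to a statement about the $(n-1)$-dimensional measure of the boundary $\Gamma$, exploiting the Lipschitz structure locally. First I would localize: using the finite covering of $\Gamma$ by the balls $K_1,\dots,K_N$ from Definition \ref{def-lip-1} and the associated Lipschitz diffeomorphisms $\psi^{(l)}$, it suffices to prove \eqref{eq:geom1} for the portion of $\Omega^h_j$ lying in a single $K_l$, and hence (after transporting by the bi-Lipschitz map $\psi^{(l)}$, which distorts distances and volumes only by fixed constants) for a special Lipschitz domain $\Omega=\{(x',x_n):x_n>g(x')\}$ sitting above the graph of a Lipschitz function $g:\mathbb{R}^{n-1}\to\mathbb{R}$. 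Since $\operatorname{diam}(\operatorname{supp}\text{ of interest})\lesssim 1$ we only ever look at a bounded piece, so the covering argument costs nothing.

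Next I would observe that on a special Lipschitz domain the distance to the boundary is comparable to the "vertical" distance: there is a constant $c=c(\operatorname{Lip} g)$ with
\[
c^{-1}\,(x_n-g(x'))\le \delta(x',x_n)\le x_n-g(x'),\qquad (x',x_n)\in\Omega.
\]
The point $x$ lies in $\Omega^h_j$ only if the \emph{minimum} of $\delta$ along the whole segment $[x,x+kh]$ is in $[2^{-j},2^{-j+1}]$; in particular $\delta(x)\lesssim 2^{-j+1}$, so $x_n-g(x')\lesssim 2^{-j}$. Thus, for each fixed $x'\in\mathbb{R}^{n-1}$, the set of admissible $x_n$ is contained in an interval of length $\lesssim 2^{-j}$, namely $g(x')\le x_n\le g(x')+C2^{-j}$. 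Integrating in $x_n$ first and then in $x'$ over the (bounded) relevant region gives
\[
|\Omega^h_j|\;\lesssim\;\int_{\{x':\,\ldots\}} C\,2^{-j}\,\mathrm{d}x'\;\lesssim\;2^{-j},
\]
with a constant depending only on $n$, the Lipschitz constants of the $\psi^{(l)}$ and $g$, and the diameter of the relevant region — all independent of $j$ and $h$. This is exactly \eqref{eq:geom1}.

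The genuinely delicate point is making the first sentence ("$x\in\Omega^h_j\Rightarrow\delta(x)\lesssim 2^{-j}$") do the work cleanly: one must check that the minimum of $\delta$ over the segment being $\le 2^{-j+1}$ really does bound $\delta$ at the endpoint $x$ by $2^{-j+1}$, which is immediate since $x$ itself lies on the segment. So in fact no subtlety there — the only care needed is the uniformity of the comparison constant $c$ between $\delta$ and the vertical distance across all the coordinate patches, and the fact that $|h|\le 1$ together with the bounded support keeps everything inside a fixed compact set where the local graph representations are valid. (The lower bound $2^{-j}\le\min\delta$ in the definition of $\Omega^h_j$ is not needed for the upper estimate \eqref{eq:geom1}; it only serves to make the $\Omega^h_j$ essentially disjoint for a dyadic range of $j$, which will presumably be used where this lemma is applied.) I would therefore expect the write-up to be short, with the "hard part" being merely the bookkeeping of the bi-Lipschitz change of variables and the verification that all constants are independent of $j$ and $h$.
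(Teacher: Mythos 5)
The key step of your argument is false. From $\min_{y\in[x,x+kh]}\delta(y)\le 2^{-j+1}$ you conclude ``in particular $\delta(x)\lesssim 2^{-j+1}$'', justified by the remark that this ``is immediate since $x$ itself lies on the segment.'' But membership of $x$ in the segment only gives $\delta(x)\ge \min_{y\in[x,x+kh]}\delta(y)$, which is the wrong direction: the minimum may be attained at the far endpoint $x+kh$ or at an interior point, in which case $\delta(x)$ can be of order $1$ while the minimum is of order $2^{-j}$. Concretely, let $\Omega$ contain a flat horizontal boundary piece, let $h=-|h|e_n$ point towards that piece, and let $x$ sit at height $k|h|+2^{-j}$ above it; then $x\in\Omega^h_j$ but $\delta(x)\approx k|h|$, which is not $\lesssim 2^{-j}$ when $|h|$ is of order $1$. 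So $\Omega^h_j$ is \emph{not} contained in a collar $\{x:\delta(x)\lesssim 2^{-j}\}$ of $\Gamma$, and your reduction to integrating $x_n$ over an interval of length $C2^{-j}$ attached to the graph collapses. (The Lipschitz continuity of $\delta$ only yields $\delta(x)\le 2^{-j+1}+k|h|$, which does not help.)

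What does survive from your write-up is the localization to a special Lipschitz domain, the equivalence $\delta(x)\approx x_n-g(x')$, and the idea of bounding the length of each vertical slice $\{x_n:(x',x_n)\in\Omega^h_j\}$ by $C2^{-j}$; this is exactly the paper's strategy. But the slice bound requires a different argument from containment in a collar: one compares \emph{two} points $y=(y',y_n)$ and $\tilde y=(y',\tilde y_n)$ of $\Omega^h_j$ on the same vertical line, with $\tilde y_n>y_n$. Picking $t_0\in[0,k]$ with $\delta(\tilde y+t_0h)\le 2^{-j+1}$ and using that $y+t_0h$ still lies above the graph (because $y\in\Omega^h$), one writes
\begin{equation*}
\tilde y_n-y_n=\bigl[\tilde y_n+t_0h_n-g(y'+t_0h')\bigr]+\bigl[g(y'+t_0h')-t_0h_n-y_n\bigr],
\end{equation*}
where the second bracket is negative and the first is $\lesssim\delta(\tilde y+t_0h)\lesssim 2^{-j}$. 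This bounds the slice length without ever asserting that $\Omega^h_j$ lies near $\Gamma$. You would need to replace your containment claim by an argument of this type for the proof to go through.
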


\begin{proof}
To simplify the notation, we shall assume that $\Omega$ is a simple Lipschitz domain of the type
$\Omega=\{(x',x_n)=(x_1,\dots,x_{n-1},x_n)\in\R^n: x_n>\psi(x'), |x'|<1\}$, where $\psi$ is a Lipschitz function, 
and we identify $\Gamma$ with $\{(x',x_n):x_n=\psi(x'), |x'|<1\}$. \\


\underline{Step 1:} First, let us observe that 
\beq\label{eq:geomprf1}
{\rm dist}\,(x,\Gamma)\approx (x_n-\psi(x')) \quad\text{for}\quad x=(x',x_n)\in\Omega
\eeq
and the constants in this equivalence depend only on the Lipschitz constant of $\psi$.
The simple proof of this fact is based on the inner cone property of Lipschitz domains. We refer
to \cite[Chapter VI, Section 3.2, Lemma 2]{stein} for details.\\

\begin{minipage}{.35\textwidth}
\underline{Step 2:} \\

Let $j\in\N_0$ and $0<|h|\le 1$ be fixed and let $$y=(y',y_n)\in \Omega^h_j$$
and let also $$\tilde y=(y',\tilde y_n)\in \Omega^h_j$$ with $\tilde y_n>y_n$. 

As $\tilde y\in\Omega^h_j$, there is a $t_0\in [0,k]$ such that ${\rm dist}(\tilde y+t_0h,\Gamma)\le 2^{-j+1}$. 

\end{minipage}\hfill
\begin{minipage}{0.65\textwidth}
\begin{psfrags}
 	\psfrag{R}{$\mathbb{R}^{n-1}$}
	\psfrag{r}{$\real$}
	\psfrag{h}{$h$}
	\psfrag{O}{$\Omega$}
	\psfrag{G}{$\Gamma$}
	\psfrag{a}{\small{$\tilde{y}=(y',\tilde{y}_n)$}}
	\psfrag{b}{\small{${y}=(y',{y}_n)$}}
	\psfrag{c}{\small{$\tilde{y}+kh$}}
	\psfrag{d}{\small{$y+kh$}}
	\psfrag{e}{\small{$y+t_0h$}}
	\psfrag{f}{\small{$\tilde{y}+t_0h$}}
	\psfrag{y}{$y'$}
	\psfrag{j}{$2^{-j}$}
 	{\includegraphics[width=10cm]{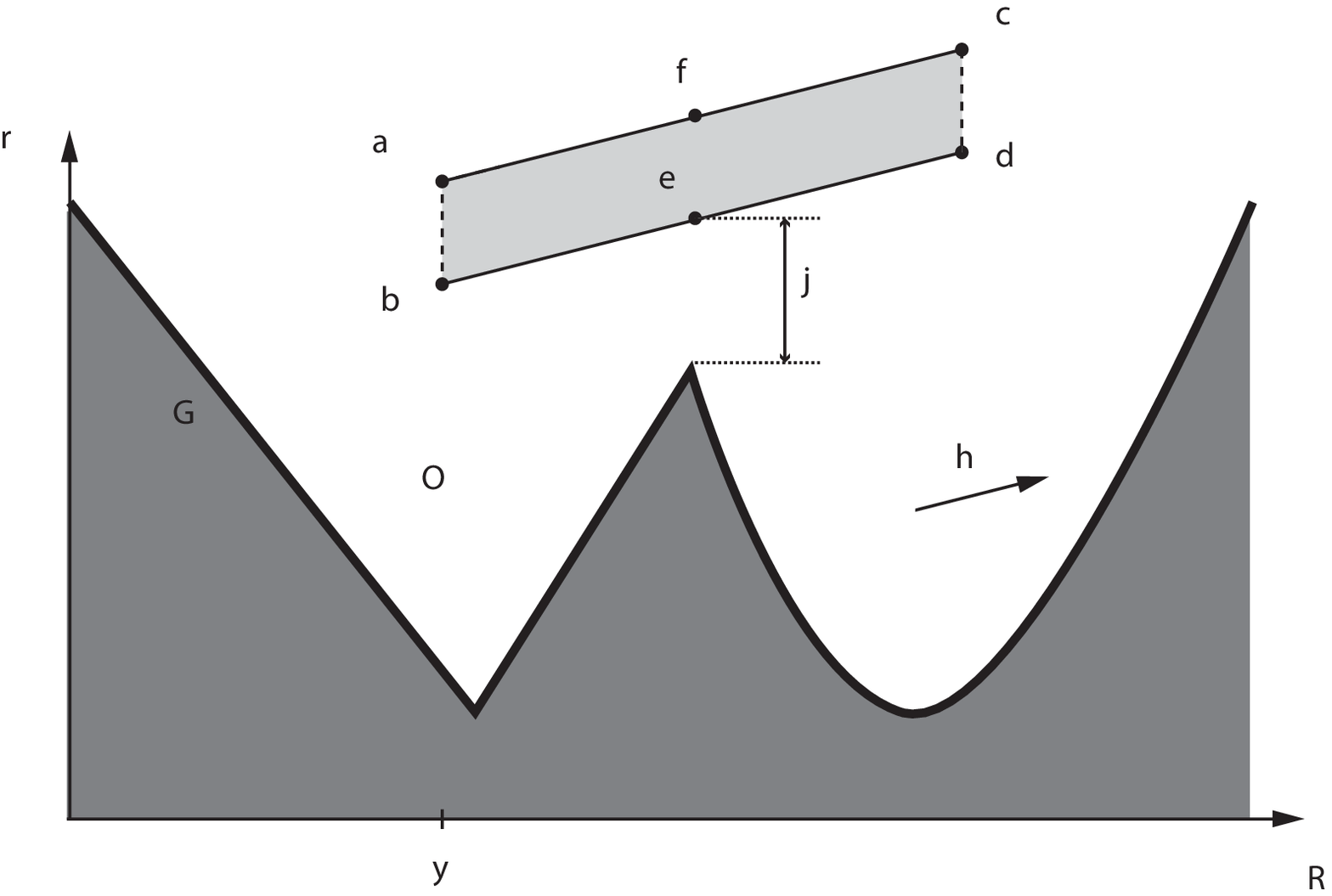}}
\end{psfrags}
\end{minipage}\\

Then we use $\psi(y'+t_0h)<t_0h_n+y_n$ (which follows from $y\in\Omega^h$ and $y+t_0h\in\Omega$) and \eqref{eq:geomprf1} to get
\begin{align}
\notag \tilde y_n-y_n&=[\tilde y_n+t_0h_n-\psi(y'+t_0h')]+[\psi(y'+t_0h')-t_0h_n-y_n]\\
\label{eq:geomprf2}&\lesssim {\rm dist}(\tilde y+t_0h,\Gamma)\lesssim 2^{-j}.
\end{align}

\underline{Step 3:} Using \eqref{eq:geomprf2}, we observe that the set 
$\Omega(x')=\{x_n\in\R:(x',x_n)\in\Omega^h_j\}$ has for every $|x'|<1$ length
smaller then $c\,2^{-j}.$ From this, the inequality \eqref{eq:geom1} quickly follows.
\end{proof}

We shall use this geometrical observation together with the extension operator \eqref{eq:ext:def} to prove the following.

\begin{lemma}\label{lem:central}
Let $\Omega$ be a bounded Lipschitz domain and let $\Gamma$ be its boundary. Let $a$ be a Lipschitz function on $\Gamma$.
Let $0<p\le\infty$, $0<s<\infty$ and $k\in\N$ with $0<s< k<1/p+1$. Then the extension operator defined by \eqref{eq:ext:def} satisfies
\begin{equation}\label{central:2}
\|{\rm Ext}\, a|\mathbf{B}^{s}_{p,p}(\Omega)\|\lesssim \|a|{\rm Lip}(\Gamma)\|
\end{equation}
with the constant independent of $a\in {\rm Lip}(\Gamma)$.
\end{lemma}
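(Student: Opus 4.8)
The plan is to estimate the Besov quasi-norm of $F := {\rm Ext}\, a$ on $\Omega$ directly through the classical characterization via differences, i.e.\ via \eqref{class-B-dom}--\eqref{def-Bd-dom} with the single parameter $r=k$ (legitimate since $\Omega$ is a Lipschitz domain, hence an $(\varepsilon,\delta)$-domain, and $k>s$). Since $p=q$ here, this amounts to showing
\[
\|F|L_p(\Omega)\|^p + \int_0^1 t^{-sp}\,\omega_k(F,t,\Omega)_p^p\,\frac{\ud t}{t} \lesssim \|a|{\rm Lip}(\Gamma)\|^p.
\]
The $L_p(\Omega)$-term is trivial: $|F|\le \max_i|\mu_i|\le \|a|L_\infty(\Gamma)\|\le\|a|{\rm Lip}(\Gamma)\|$ and $\Omega$ is bounded. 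By homogeneity we may and do normalize $\|a|{\rm Lip}(\Gamma)\|=1$ from now on. The real work is the modulus of smoothness, and here the key input is Lemma \ref{lemext}, which gives $|D^\alpha F(x)|\lesssim \delta(x)^{1-k}$ for $|\alpha|=k$ on $\Omega$ (for lower-order derivatives $1\le|\alpha|<k$ one gets even better, namely $\delta(x)^{1-|\alpha|}$, but the top order is the decisive one), together with the geometric decay estimate of Lemma \ref{lem:geom}.

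First I would reduce to a simple (special) Lipschitz domain by the usual localization and partition-of-unity argument (and, by the Remark after Lemma \ref{lemext}, we may assume ${\rm diam}(\supp a)\lesssim 1$). Fix $h\in\R^n$ with $0<|h|\le t\le 1$. For $x$ with $[x,x+kh]\subset\Omega$, Taylor's theorem applied to $F\in C^\infty(\Omega)$ along this segment gives
\[
|\Delta_h^k F(x)| \lesssim |h|^k \cdot \max_{|\alpha|=k}\ \sup_{y\in[x,x+kh]} |D^\alpha F(y)| \lesssim |h|^k \cdot \Big(\min_{y\in[x,x+kh]}\delta(y)\Big)^{1-k},
\]
using Lemma \ref{lemext} and that $k-1\ge 0$ so the worst point is the one closest to $\Gamma$. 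Now decompose $\Omega^h=\bigcup_{j\ge 0}\Omega^h_j$ with the sets from Lemma \ref{lem:geom}; on $\Omega^h_j$ the factor $\big(\min_y\delta(y)\big)^{1-k}\sim 2^{j(k-1)}$ and $|\Omega^h_j|\lesssim 2^{-j}$. But we should only sum over those $j$ for which $\Omega^h_j$ is nonempty and relevant: since points at distance $\gtrsim |h|$ from $\Gamma$ contribute a bounded segment, the genuinely singular layers are $2^{-j}\lesssim |h|$, i.e.\ $2^{-j}\lesssim t$. Hence
\[
\|\Delta_h^k F(\cdot,\Omega)|L_p(\Omega)\|^p \lesssim |h|^{kp}\sum_{j:\,2^{-j}\lesssim t} 2^{j(k-1)p}\,|\Omega^h_j| \lesssim t^{kp}\sum_{j:\,2^{-j}\lesssim t} 2^{j(k-1)p}\,2^{-j} = t^{kp}\sum_{j:\,2^{-j}\lesssim t} 2^{j[(k-1)p-1]},
\]
plus a harmless bounded contribution from the layers with $2^{-j}\gtrsim t$ (where one uses instead the crude bound $|\Delta_h^k F|\lesssim t^{k-1}\cdot t =t^k$ coming from the first-order difference of a Lipschitz-type function, or simply $\|F|C^1\|$ type control away from the boundary, times $|\Omega|$). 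Taking the supremum over $|h|\le t$ yields $\omega_k(F,t,\Omega)_p^p$.

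The exponent bookkeeping is the crux: the hypothesis $k<1/p+1$ is exactly $(k-1)p-1<0$, so the geometric series $\sum_{2^{-j}\lesssim t}2^{j[(k-1)p-1]}$ is dominated by its largest term, at $2^{-j}\sim t$, giving $\sum\lesssim t^{-[(k-1)p-1]}=t^{(1-k)p+1}$. Therefore $\omega_k(F,t,\Omega)_p^p \lesssim t^{kp}\cdot t^{(1-k)p+1} = t^{p+1}$, so that
\[
\int_0^1 t^{-sp}\,\omega_k(F,t,\Omega)_p^p\,\frac{\ud t}{t} \lesssim \int_0^1 t^{-sp}\,t^{p+1}\,\frac{\ud t}{t} = \int_0^1 t^{(1-s)p}\,\frac{\ud t}{t} < \infty,
\]
since $s<1\le 1$ forces $(1-s)p>0$. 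Collecting the pieces gives \eqref{central:2}. I expect the main obstacle to be the careful splitting of the segment-distance layers — i.e.\ making rigorous which $j$ actually occur for a given $|h|\le t$, and controlling the non-singular part uniformly — together with justifying the Taylor/finite-difference bound right up to the boundary where $F$ is only smooth in the interior; both are handled by the buffer built into $\Omega^h$ (requiring the whole segment $[x,x+kh]$ to lie in $\Omega$) and by Lemma \ref{lem:geom}. The condition $k<1/p+1$ enters in exactly one place, the convergence of the geometric sum, which is why it is sharp for this argument.
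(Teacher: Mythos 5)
Your strategy is essentially the paper's: the same two ingredients (the derivative bound $|D^\alpha {\rm Ext}\,a(x)|\lesssim \delta(x)^{1-k}\|a|{\rm Lip}(\Gamma)\|$ from Lemma \ref{lemext}, and the layer estimate $|\Omega^h_j|\lesssim 2^{-j}$ from Lemma \ref{lem:geom}), the same representation of $\Delta^k_h$ through $k$-th derivatives along the segment (the paper uses the B-spline identity $\Delta^k_hf(x)=\int_0^k g^{(k)}(t)B_k(t)\,dt$, which is your Taylor bound in clean form), and the same role for $k<1/p+1$, namely convergence of $\sum_j 2^{j[(k-1)p-1]}$. The only structural difference is that the paper does not split into near and far layers: it bounds $\sup_{0<|h|\le 1}|h|^{-s'}\|\Delta^k_h{\rm Ext}\,a(\cdot,\Omega)|L_p(\Omega)\|$ uniformly by summing $2^{j[(k-1)p-1]}$ over \emph{all} $j\ge 0$ (a convergent series dominated by $j=0$), which gives $\|{\rm Ext}\,a|\mathbf{B}^{s'}_{p,\infty}(\Omega)\|\lesssim\|a|{\rm Lip}(\Gamma)\|$ for any $s<s'<k$, and then uses the elementary embedding $\mathbf{B}^{s'}_{p,\infty}\hookrightarrow\mathbf{B}^{s}_{p,p}$. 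Your near/far decomposition is therefore unnecessary work. The paper also records explicitly (with references) that $\Delta^r_hf(x,\Omega)$ from \eqref{eq:finalJV2} may be replaced by the version vanishing unless the whole segment $[x,x+kh]$ lies in $\Omega$; you assume this silently.

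There is one concrete error in your bookkeeping that you must repair, because as written your conclusion only covers $s<1$ while the lemma allows any $0<s<k<1/p+1$ (and the application in Lemma \ref{ext-atom} uses $s=s'+1/p$, which exceeds $1$ whenever $p\le 1$). First, $\int_0^1 t^{-sp}\,t^{p+1}\,\frac{\ud t}{t}=\int_0^1 t^{(1-s)p}\,\ud t$, not $\int_0^1 t^{(1-s)p}\frac{\ud t}{t}$; the former converges precisely when $(1-s)p>-1$, i.e.\ $s<1+1/p$, which \emph{is} guaranteed by $s<k<1+1/p$ — so the near-layer term is fine, but not for the reason you give ("$s<1$"). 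Second, your far-layer "crude bound'': the Lipschitz-type control actually yields $|\Delta^k_h{\rm Ext}\,a|\lesssim |h|^k(\min_y\delta(y))^{1-k}\lesssim t$ on the layers with $2^{-j}\gtrsim t$, not $t^k$; with the measure $|\Omega|$ this gives a contribution $t^p$ to $\omega_k(\cdot,t)_p^p$ and hence again forces $s<1$. The correct treatment is to apply the same layer sum there: $t^{kp}\sum_{j:2^{-j}\gtrsim t}2^{j[(k-1)p-1]}\lesssim t^{kp}$, which integrates against $t^{-sp-1}$ since $k>s$. Once these two exponents are fixed, your argument is a valid (slightly more laborious) variant of the paper's proof.
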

\begin{proof}
Using the characterization by differences, we obtain
\begin{align*}
\|{\rm Ext}\, a|\mathbf{B}^{s}_{p,p}(\Omega)\|&\lesssim \|{\rm Ext}\, a|\mathbf{B}^{s'}_{p,\infty}(\Omega)\|\\
&\lesssim \|{\rm Ext}\,a|L_p(\Omega)\|+\sup_{0<|h|\le 1}|h|^{-s'}\|\Delta_h^k {\rm Ext}\, a(\cdot,\Omega)|L_p(\Omega)\|,
\end{align*}
for $s'>0$ with $s<s'<k$. 
Furthermore, we observe that one may modify the definition of $\Delta_h^r f(x,\Omega)$ given in \eqref{eq:finalJV2}
to be zero also if the whole segment $[x,x+kh]$ is not a subset of $\Omega$. This follows by a detailed inspection
of \cite[Section 2.5.12]{T-F1} as well as \cite{Di03} and \cite{dVS93}, which are all based on the integration in cones.

Using the definition of $\mu_i$, the first term may be estimated easily as
$$
\|{\rm Ext}\,a|L_p(\Omega)\| \lesssim \|{\rm Ext}\,a|L_\infty(\Omega)\|\le \|a|L_\infty(\Gamma)\|.
$$

To estimate the second term, we shall need the following relationship between differences and derivatives.
If $f\in C^k(\R^n)$ and $x,h\in\R^n$, we put $g(t)=f(x+th)$ for $t\in\R$ and obtain
\begin{equation}\label{eq:HIDR}
\Delta^k_h f(x)=\Delta^k_1 g(0) = \int_0^k g^{(k)}(t) B_k(t)dt,
\end{equation}
where $B_k$ is the standard $B$ spline of order $k$, i.e. the $k$-fold convolution of $\chi_{[0,1]}$ given by \mbox{$B_k =\chi_{[0,1]}*\dots*\chi_{[0,1]}$}.
Although \eqref{eq:HIDR} is a classical result of approximation theory (c.f. \cite[Section 4.7]{DL93}), let us give a short proof 
using Fubini's Theorem and induction over $k$:
\begin{align*}
\Delta^{k+1}_1 g(0)&=\Delta^k_1g(1)-\Delta^k_1g(0)=\int_0^k (g^{(k)}(t+1)-g^{(k)}(t))B_k(t)dt\\
&=\int_0^k B_k(t)\int_t^{t+1}g^{(k+1)}(u)du\, dt=\int_0^{k+1} g^{(k+1)}(u)\int_{u-1}^{u} B_k(t)dtdu=\int_0^{k+1}g^{(k+1)}(u)B_{k+1}(u)du.
\end{align*}
Hence if $[x,x+kh]\subset \Omega$ for some $x\in \Omega$, we obtain
$$
|\Delta^k_h {\rm Ext}\, a(x,\Omega)| \lesssim |h|^k\int_0^k \max_{|\alpha|=k} |D^\alpha {\rm Ext}\, a (x+th)|\cdot B_k(t) dt
\lesssim |h|^k\cdot\|a|{\rm Lip}(\Gamma)\|\cdot\int_0^k \delta(x+th)^{1-k}\cdot B_k(t) dt.
$$
Let us fix $h\in\R^n$ with $0<|h|\le 1$ and let us denote $\Omega^h=\{x\in\Omega: [x,x+kh]\subset\Omega\}$ as in Lemma \ref{lem:geom}. We obtain
\begin{align*}
|h|^{-s'}\|\Delta_h^k {\rm Ext}\, a(\cdot,\Omega)|L_p(\Omega)\|&\lesssim
|h|^{k-s'}\|a|{\rm Lip}(\Gamma)\|\left(\int_{\Omega^h}\left(\int_0^k \delta(x+th)^{1-k}\cdot B_k(t) dt\right)^p dx\right)^{1/p}\\
&\lesssim \|a|{\rm Lip}(\Gamma)\|\left(\int_{\Omega^h} \max_{y\in[x,x+kh]}\delta(y)^{(1-k)p} dx\right)^{1/p}\\
&\lesssim \|a|{\rm Lip}(\Gamma)\|\left(\sum_{j=0}^\infty 2^{-j(1-k)p} |\Omega^h_j|\right)^{1/p}.\\
\end{align*}
This, together with Lemma \ref{lem:geom} and with $k<1/p+1$ finishes the proof.

\end{proof}
 
\begin{lemma}\label{ext-atom}
Let $0<s'<1$ be fixed. 
There is a non-linear extension operator (denoted by $\mathbf{Ext}$), 
which extends $\Lip^\Gamma$-atoms $a_{j,m}$ to $(s'+1/p,p)$-atoms on $\R^n$. 
\end{lemma}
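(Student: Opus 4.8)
The plan is to take a $\Lip^\Gamma$-atom $a_{j,m}$ supported in $Q^\Gamma_{j,m} = dQ_{j,m}\cap\Gamma$ with $\|a_{j,m}|L_\infty(\Gamma)\|\le 1$ and Lipschitz constant $\le 2^j$, and to produce from it a $(s'+1/p,p)$-atom on $\R^n$ by first rescaling, then applying the Whitney extension operator from Lemma \ref{lemext}, and finally rescaling back. The key point is that the Whitney extension interacts well with dilations, so all estimates reduce to the case $j=0$. Concretely, I would set $\widetilde a(y) := a_{j,m}(2^{-j}y)$, which is a Lipschitz function on the rescaled boundary $2^j\Gamma$ with $\|\widetilde a|\Lip(2^j\Gamma)\|\le 1$ (cf.\ the remark after Definition \ref{bd-lip-atom}). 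One must be slightly careful here because $2^j\Gamma$ is not a fixed Lipschitz boundary; however, since $a_{j,m}$ is supported in a cube of side $\sim d\,2^{-j}$, after translation we may assume it is supported near the origin, and on the scale of its support the boundary $2^j\Gamma$ looks (uniformly in $j$) like the graph of a Lipschitz function with the same Lipschitz constant as $\Gamma$, so Lemma \ref{lemext} applies with constants independent of $j$. Composing with a smooth cut-off as in the remark after Lemma \ref{lemext} we may also assume $\mathrm{diam}(\supp\, \mathrm{Ext}\,\widetilde a)\lesssim 1$.

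Next I would invoke Lemma \ref{lem:central} with $p_0=p_1=p$, smoothness $s=s'+1/p$, and the integer $k\in\N$ chosen so that $s'+1/p < k < 1/p+1$ — this is possible precisely because $0<s'<1$, which forces $s'+1/p<1+1/p$, and one picks $k=1$ if $1/p+s'<1$ resp.\ the appropriate value otherwise; the hypothesis $0<s<k<1/p+1$ of Lemma \ref{lem:central} is met. This gives
\[
\|\mathrm{Ext}\,\widetilde a\,|\,\mathbf{B}^{s'+1/p}_{p,p}(\Omega)\|\lesssim \|\widetilde a\,|\,\Lip(\Gamma)\|\lesssim 1,
\]
with constant independent of $a_{j,m}$ (hence of $j,m$). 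Since $\mathrm{Ext}\,\widetilde a$ has support of diameter $\lesssim 1$, and the Besov spaces on $\Omega$ are restrictions of those on $\R^n$ (Remark \ref{extr_char}), this extends to a function on $\R^n$, still denoted $\mathrm{Ext}\,\widetilde a$, with $\|\mathrm{Ext}\,\widetilde a\,|\,\mathbf{B}^{s'+1/p}_{p}(\R^n)\|\lesssim 1$.

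Finally I would undo the dilation: define
\[
\mathbf{Ext}\,a_{j,m}(x) := c\,(\mathrm{Ext}\,\widetilde a)(2^j x),
\]
with $c>0$ a fixed constant (independent of $j,m$) chosen to absorb the implicit constants above. Then $\mathbf{Ext}\,a_{j,m}$ is supported in a cube of the form $d'Q_{j,m}$ for some $d'>1$ (the cut-off keeps the support diameter $\lesssim 2^{-j}$), it restricts on $\Gamma$ to $a_{j,m}$ up to the normalization, and by Definition \ref{def-atoms-ns2} its $(s'+1/p,p)$-atom property is exactly the normalization
\[
\|(\mathbf{Ext}\,a_{j,m})(2^{-j}\cdot)\,|\,\mathbf{B}^{s'+1/p}_{p}(\R^n)\| = c\,\|\mathrm{Ext}\,\widetilde a\,|\,\mathbf{B}^{s'+1/p}_{p}(\R^n)\|\le 1,
\]
which holds by the choice of $c$. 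The operator $\mathbf{Ext}$ is manifestly non-linear because $\mu_i$ depends on $a$ through local averages and then a fixed renormalization constant is applied; linearity is neither claimed nor available. The main obstacle I anticipate is the bookkeeping in the first step: making rigorous that the Whitney extension of the rescaled atom obeys the derivative bounds of Lemma \ref{lemext} with constants uniform in $j$, since the rescaled domain $2^j\Omega$ changes with $j$ — this is handled by localizing to the support of the atom and using that, at that scale, $2^j\Gamma$ is a Lipschitz graph with $j$-independent Lipschitz constant, together with the fact that Lemma \ref{lem:central} was proved for a general (simple) Lipschitz domain with constants depending only on the Lipschitz character.
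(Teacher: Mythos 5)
Your overall architecture (rescale, apply the Whitney extension of Lemma \ref{lemext}, control the Besov norm via Lemma \ref{lem:central}, pass from $\Omega$ to $\R^n$ by Remark \ref{extr_char}, renormalize) matches the paper's, and your reduction to a single scale by dilation is equivalent to the paper's homogeneity reduction to $j=0$. However, there is a genuine gap at the central step: you assert that because $0<s'<1$ one can always choose an integer $k$ with $s'+1/p<k<1/p+1$, so that Lemma \ref{lem:central} applies directly with smoothness $s=s'+1/p$. This is false. The open interval $(s'+1/p,\,1/p+1)$ has length $1-s'<1$ and need not contain any integer: writing $1/p=m+\tau$ with $m=[1/p]$, a suitable $k$ (necessarily $k=m+1$) exists only when $0<\tau<1-s'$. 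In particular for $p=1$ the required interval is $(1+s',2)$, which contains no integer for any $s'\in(0,1)$, and the same failure occurs on a whole family of triangles in the $(s',1/p)$-diagram. So your argument, as written, does not establish \eqref{central'} for all admissible $p$ and $s'$.

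The paper closes exactly this gap with an extra step you omit: for the parameters where no admissible $k$ exists, it chooses $p_0<p<p_1$ with $1/p=(1-\theta)/p_0+\theta/p_1$ such that both $(s',1/p_0)$ and $(s',1/p_1)$ lie in the region where Lemma \ref{lem:central} does apply, and then uses the Gagliardo--Nirenberg-type inequality of Lemma \ref{lem1} (note that interpolating the smoothnesses $s'+1/p_0$ and $s'+1/p_1$ with weight $\theta$ yields precisely $s'+1/p$) to deduce
\[
\|{\rm Ext}\,a_{0,m}\,|\,\mathbf{B}^{s'+1/p}_{p,p}(\Omega)\|\lesssim
\|{\rm Ext}\,a_{0,m}\,|\,\mathbf{B}^{s'+1/p_0}_{p_0,p_0}(\Omega)\|^{1-\theta}\cdot
\|{\rm Ext}\,a_{0,m}\,|\,\mathbf{B}^{s'+1/p_1}_{p_1,p_1}(\Omega)\|^{\theta}
\lesssim \|a_{0,m}\,|\,{\rm Lip}(\Gamma)\|.
\]
Without this interpolation step (or some substitute for it), the lemma is only proved on part of the parameter range. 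The rest of your proposal --- the uniformity in $j$ of the Whitney bounds after localization, the cut-off to control the support, and the final renormalization making $(\mathbf{Ext}\,a_{j,m})(2^{-j}\cdot)$ a correctly normalized $(s'+1/p,p)$-atom --- is sound.
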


\proofstart
As the definition of $\Lip^\Gamma$-atoms as well as the definition of $(s'+1/p,p)$-atoms works with
$a_j(2^{-j}\cdot)$, by homogeneity arguments it is enough to prove 
\begin{equation}\label{central}
\|\mathbf{Ext}\, a_{0,m}|\mathbf{B}^{s'+1/p}_{p,p}(\R^n)\|\lesssim \|a_{0,m}|{\rm Lip}(\Gamma)\|
\end{equation}
for $\Lip^\Gamma$-atoms $a_{j,m}$ with $j=0$. 
First we show  that
\begin{equation}\label{central'}
\|{\rm Ext}\, a_{0,m}|\mathbf{B}^{s'+1/p}_{p,p}(\Omega)\|\lesssim \|a_{0,m}|{\rm Lip}(\Gamma)\|
\end{equation}
for the extension operator constructed in \eqref{eq:ext:def}. Let $0<s'<1$ and $0<p\le \infty$. We observe, that Lemma \ref{lem:central} implies
\eqref{central'} for all $0<s'<1$ for which there is a $k\in\N_0$ with \\
\begin{minipage}{0.46\textwidth}
$$s'+1/p<k<1+1/p.$$ In the diagram aside these points correspond to all $(s',\frac 1p)$ in the gray-shaded triangles. \\
Then Lemma \ref{lem1} yields \eqref{central'} for all $0<s'<1$ and $0<p\le\infty$
with $s_0=s_1=s'$ and $p_0<p<p_1$ chosen in an appropriate way, see the attached diagram.
\end{minipage}\hfill
\begin{minipage}{0.50\textwidth}
\begin{psfrags}
 	\psfrag{0}{$0$}
	\psfrag{1}{$1$}
	\psfrag{z}{$1$}
	\psfrag{a}{\small{$\left(s, \frac{1}{p_0}\right)$}}
	\psfrag{b}{\small{$\left(s, \frac{1}{p_1}\right)$}}
	\psfrag{c}{\small{$\left(s, \frac{1}{p}\right)$}}
	\psfrag{d}{\small{$s+\frac 1p=1$}}
	\psfrag{e}{\small{$s+\frac 1p=2$}}
	\psfrag{f}{\small{$s+\frac 1p=3$}}
	\psfrag{s}{$s$}
	\psfrag{p}{$\frac 1p$}
 	{\includegraphics[width=8.4cm]{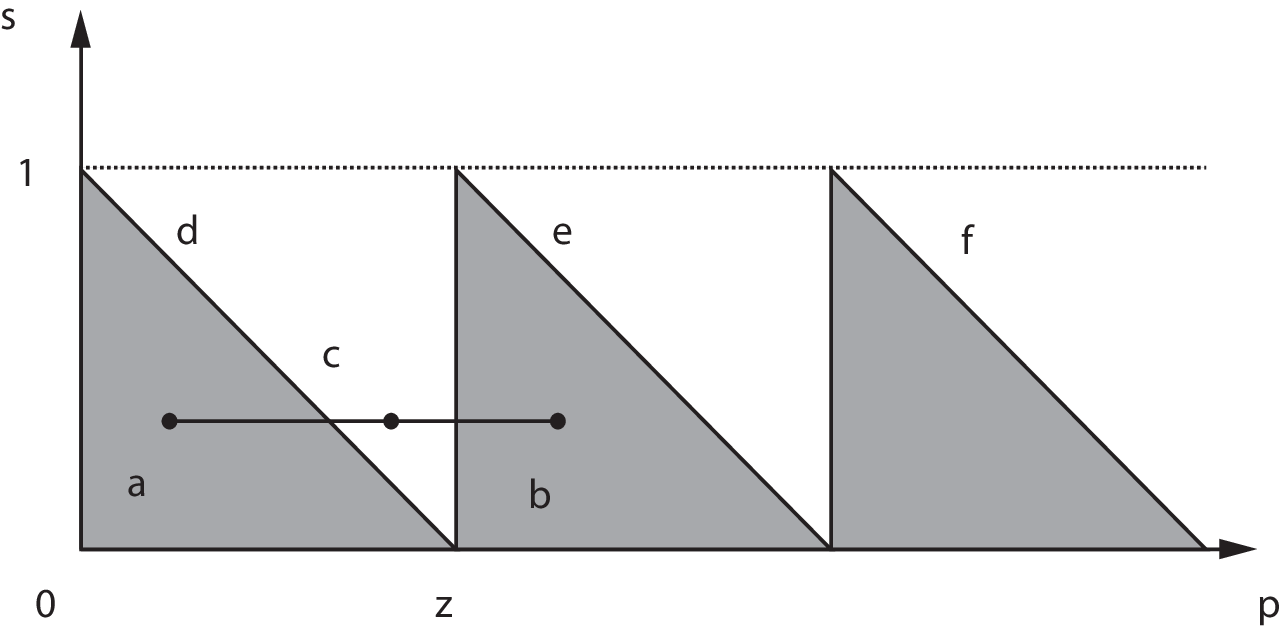}}
\end{psfrags}
\end{minipage}\\

Finally, by Remark \ref{extr_char}, we know that there is a function (denoted by $\mathbf{Ext}\,a_{0,m}$),
such that
$$
\|\mathbf{Ext}\,a_{0,m}|\mathbf{B}^{s'+1/p}_{p,p}(\R^n)\|\lesssim\|{\rm Ext}\,a_{0,m}|\mathbf{B}^{s'+1/p}_{p,p}(\Omega)\|.
$$
This together with \eqref{central'} finishes the proof of \eqref{central}.
\proofend
 
We are now able to complete the proof of the missing part of the trace theorem.
 
\begin{theorem}\label{B-trace-2}
Let $n\geq 2$ and $\Omega$ be a bounded Lipschitz domain with boundary $\Gamma$. Then for $0<s<1$ and $0<p,q\leq\infty$
there is a bounded non-linear extension operator
\beq\label{B-trace-th2}
{Ext}:\mathbf{B}^{s}_{p,q}(\Gamma)\longrightarrow \mathbf{B}^{s+\frac 1p}_{p,q}(\Omega).
\eeq
\end{theorem}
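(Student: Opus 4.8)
The plan is to construct the extension operator $\mathit{Ext}$ by combining the atomic decomposition of $\mathbf B^s_{p,q}(\Gamma)$ from Theorem~\ref{th:at-bd-1} with the atom-wise extension $\mathbf{Ext}$ from Lemma~\ref{ext-atom}. Given $f\in\mathbf B^s_{p,q}(\Gamma)$, take an optimal $\Lip^\Gamma$-atomic decomposition $f=\sum_{j,m}\lambda_{j,m}a^\Gamma_{j,m}$ with $\|\lambda|b^s_{p,q}(\Gamma)\|\lesssim\|f|\mathbf B^s_{p,q}(\Gamma)\|$, fix a parameter $s'$ with $s<s'<1$, and define
\[
\mathit{Ext}\,f:=\sum_{j,m}\lambda_{j,m}\,\mathbf{Ext}\,a^\Gamma_{j,m},
\]
where each $\mathbf{Ext}\,a^\Gamma_{j,m}$ is, by Lemma~\ref{ext-atom} (after the cut-off modification described in the Remark following Lemma~\ref{lemext}, so that supports stay comparable to $dQ_{j,m}$), an $(s'+1/p,p)$-atom on $\R^n$. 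Since $s+\tfrac1p<s'+\tfrac1p$, Theorem~\ref{th:atom-dec} applies with smoothness parameter $s+\tfrac1p$ and non-smoothness parameter $\sigma=s'+\tfrac1p$, and we would need to check that the coefficient sequence lies in the correct sequence space: the $\Lip^\Gamma$-atoms are indexed over cubes meeting $\Gamma$, so $\|\lambda|b^{s+1/p}_{p,q}(\rn)\|$ built from the ambient grid equals $\|\lambda|b^s_{p,q}(\Gamma)\|$ up to the shift $\tfrac1p$ in smoothness absorbed exactly by the dimensional difference $n$ versus $n-1$ — this is the mirror image of point (ii) in the proof of Theorem~\ref{B-trace-1}. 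Thus $\mathit{Ext}\,f\in\mathbf B^{s+1/p}_{p,q}(\rn)$ with norm $\lesssim\|f|\mathbf B^s_{p,q}(\Gamma)\|$, and restricting to $\Omega$ gives the claim via Remark~\ref{extr_char}.

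Next I would verify $\tr\circ\mathit{Ext}=\mathrm{id}$ on $\mathbf B^s_{p,q}(\Gamma)$. For a single $\Lip^\Gamma$-atom the Whitney extension theorem (recorded right after \eqref{eq:ext:def}) gives $\tr\circ\mathrm{Ext}\,a=a$ pointwise, hence $\tr\circ\mathbf{Ext}\,a=a$ after the harmless cut-off. For general $f$ one passes to the sum: by Step~3 of Theorem~\ref{th:at-bd-1} the series $\sum\lambda_{j,m}a^\Gamma_{j,m}$ converges in $L_p(\Gamma)$, and by the convergence statement in Theorem~\ref{th:atom-dec} the series $\sum\lambda_{j,m}\mathbf{Ext}\,a^\Gamma_{j,m}$ converges in $L_p(\Omega)$, so since $\tr$ is bounded $\mathbf B^{s+1/p}_{p,q}(\Omega)\to\mathbf B^s_{p,q}(\Gamma)$ by Theorem~\ref{B-trace-1} (and, for $p$ or $q$ infinite, in the sense explained in Remark~\ref{trace_infty}), one may interchange $\tr$ with the sum and conclude $\tr\,\mathit{Ext}\,f=f$. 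Combined with Theorem~\ref{B-trace-1} this yields the announced identity $\tr\,\mathbf B^{s+1/p}_{p,q}(\Omega)=\mathbf B^s_{p,q}(\Gamma)$.

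I expect the essential work to already be encapsulated in Lemma~\ref{ext-atom}; the residual obstacles here are bookkeeping ones. The first is the support/overlap issue: the Whitney extension of an atom supported in a small boundary patch $Q^\Gamma_{j,m}$ is a priori supported in all of $\Omega$, not in a dilate of $Q_{j,m}$, so one must genuinely invoke the cut-off from the Remark after Lemma~\ref{lemext} and check that multiplication by a smooth bump adapted to $dQ_{j,m}$ preserves \eqref{todo1} (hence the $(s'+1/p,p)$-atom bound) with constants uniform in $j,m$ — this uses that the bump at scale $2^{-j}$ has derivatives $\lesssim 2^{j|\alpha|}$ together with the multiplier behaviour of $\mathbf B^\sigma_\infty$, exactly as in Proposition~\ref{prop-mult-diffeo}(ii). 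The second is matching the sequence-space exponents across the change of dimension $n-1\to n$; this is a one-line computation identical to point (ii) of Theorem~\ref{B-trace-1} but must be done in the reverse direction. The third, minor, point is the pointwise interpretation of $\tr$ when $p=\infty$ or $q=\infty$, which is handled by the embeddings quoted in Remark~\ref{trace_infty}. The genuine non-linearity of $\mathit{Ext}$ is inherited from that of $\mathbf{Ext}$ and is simply noted, not circumvented.
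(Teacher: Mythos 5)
Your construction is exactly the one in the paper: an optimal $\Lip^{\Gamma}$-atomic decomposition from Theorem~\ref{th:at-bd-1}, atom-wise Whitney extension via Lemma~\ref{ext-atom} to $(s'+1/p,p)$-atoms with $s<s'<1$, the identification $\|\lambda|b^{s}_{p,q}(\Gamma)\|\sim\|\lambda|b^{s+1/p}_{p,q}(\Omega)\|$ coming from the exponent shift $n\to n-1$, and Theorem~\ref{th:atom-dec} to conclude boundedness. Your additional remarks (the cut-off to localize supports, the $L_p$-convergence as in Step~3 of Theorem~\ref{th:at-bd-1}, and the verification of $\tr\circ\mathit{Ext}=\mathrm{id}$) are all consistent with, and in the last case a useful supplement to, the paper's argument.
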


\proofstart
Let $f\in{\mathbf B}^s_{p,q}(\Gamma)$ with optimal decomposition in the sense of Theorem \ref{th:at-bd-1}
\begin{equation}\label{eq:decomp1}
f(x)=\sum_{j=0}^\infty \sum_{m\in\Z^{n}}\lambda_{j,m}a^{\Gamma}_{j,m}(x),
\end{equation}
where $a^\Gamma_{j,m}$ are $\Lip^\Gamma$-atoms, \eqref{eq:decomp1} converges in $L_p(\Gamma)$,
and $\|f|{\mathbf B}^s_{p,q}(\Gamma)\|\sim \|\lambda|b^s_{p,q}(\Gamma)\|.$

We use the extension operator constructed in Lemma \ref{ext-atom} and define by
\beq\label{eq:prf1}
{Ext}\,f:=\sum_{j=0}^\infty \sum_{m\in\Z^{n}}\lambda_{j,m}(\mathbf{Ext}\, a^\Gamma_{j,m})|_\Omega
\eeq
an atomic decomposition of $f$ in the space ${\mathbf B}^{s+1/p}_{p,q}(\Omega)$ with non-smooth $(s'+1/p,p)$-atoms $\mathbf{Ext}\, a^\Gamma_{j,m}$,
where $s<s'<1.$
The convergence of \eqref{eq:prf1} in $L_p(\Omega)$ follows in the same way as in the proof of Step 3 of Theorem \ref{th:at-bd-1}.

Together with $\|\lambda|b^s_{p,q}(\Gamma)\|\sim\|\lambda|b^{s+1/p}_{p,q}(\Omega)\|$, this shows that
$$\|{Ext}\, f|{\mathbf B}^{s+1/p}_{p,q}(\Omega)\|\lesssim \|\lambda|b^{s+1/p}_{p,q}(\Omega)\|\sim \|\lambda|b^s_{p,q}(\Gamma)\|<\infty
$$
is bounded.\\
\smallskip

Theorems \ref{B-trace-1} and \ref{B-trace-2} together now allow us to state the general result for traces on Lipschitz domains without any
restrictions on the parameters $s,p$ and $q$.

\begin{theorem}\label{B-trace-lip}
Let $n\geq 2$ and $\Omega$ be a bounded Lipschitz domain with boundary $\Gamma$. Then for $0<s<1$ and $0<p,q\leq\infty$,
\beq\label{B-trace-th1}
\tr \mathbf{B}^{s+\frac 1p}_{p,q}(\Omega)=\mathbf{B}^{s}_{p,q}(\Gamma).
\eeq
\end{theorem}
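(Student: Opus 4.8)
The plan is to obtain Theorem \ref{B-trace-lip} by simply combining the two halves that have already been established: the boundedness of the trace operator and the existence of a bounded (non-linear) extension operator which is a right inverse of $\tr$. Concretely, Theorem \ref{B-trace-1} gives the continuous mapping $\tr:\mathbf{B}^{s+\frac1p}_{p,q}(\Omega)\to\mathbf{B}^s_{p,q}(\Gamma)$, so the inclusion $\tr\,\mathbf{B}^{s+\frac1p}_{p,q}(\Omega)\subseteq\mathbf{B}^s_{p,q}(\Gamma)$ holds and $\tr$ is bounded. Theorem \ref{B-trace-2} provides a bounded operator ${Ext}:\mathbf{B}^s_{p,q}(\Gamma)\to\mathbf{B}^{s+\frac1p}_{p,q}(\Omega)$.

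First I would recall that ${Ext}$ is built atom-by-atom: given $f\in\mathbf{B}^s_{p,q}(\Gamma)$ with an optimal $\Lip^\Gamma$-atomic decomposition $f=\sum_{j,m}\lambda_{j,m}a^\Gamma_{j,m}$, one sets ${Ext}\,f=\sum_{j,m}\lambda_{j,m}(\mathbf{Ext}\,a^\Gamma_{j,m})|_\Omega$, where each $\mathbf{Ext}\,a^\Gamma_{j,m}$ is an $(s'+\frac1p,p)$-atom with $s<s'<1$. The key point I would stress is that $\mathbf{Ext}$ is constructed (via the Whitney extension operator of Lemma \ref{lemext}, together with Lemma \ref{lem:central} and Whitney's extension theorem, cf.\ Lemma \ref{lem:whitney}) so that it satisfies the reproducing identity $\tr\circ\mathbf{Ext}\,a=a$ for every $\Lip^\Gamma$-atom $a$. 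Hence, applying the \emph{bounded, pointwise-defined} trace operator to the $L_p(\Gamma)$-convergent series \eqref{eq:prf1} and using that $\tr$ commutes with this convergence (this is exactly where point (iv) in the proof of Theorem \ref{B-trace-1} is used — on smooth functions the atomic series converges pointwise, and the general case follows by density/completion), I would conclude $\tr({Ext}\,f)=\sum_{j,m}\lambda_{j,m}\,\tr(\mathbf{Ext}\,a^\Gamma_{j,m})=\sum_{j,m}\lambda_{j,m}a^\Gamma_{j,m}=f$. Thus $\tr\circ{Ext}=\mathrm{id}$ on $\mathbf{B}^s_{p,q}(\Gamma)$.

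Combining the two inclusions: for any $g\in\mathbf{B}^s_{p,q}(\Gamma)$ we have $g=\tr({Ext}\,g)$ with ${Ext}\,g\in\mathbf{B}^{s+\frac1p}_{p,q}(\Omega)$, giving $\mathbf{B}^s_{p,q}(\Gamma)\subseteq\tr\,\mathbf{B}^{s+\frac1p}_{p,q}(\Omega)$; the reverse inclusion is Theorem \ref{B-trace-1}. Together with the norm estimates $\|\tr f|\mathbf{B}^s_{p,q}(\Gamma)\|\lesssim\|f|\mathbf{B}^{s+\frac1p}_{p,q}(\Omega)\|$ and $\|{Ext}\,g|\mathbf{B}^{s+\frac1p}_{p,q}(\Omega)\|\lesssim\|g|\mathbf{B}^s_{p,q}(\Gamma)\|$ this yields the equality \eqref{B-trace-th1}, including the equivalence of the corresponding (quasi-)norms on the quotient/trace space.

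The only genuinely delicate point at this stage is the compatibility of the two "trace" notions appearing in the argument — the abstract trace defined by completion from $\mathcal S(\Omega)$, and the naive restriction-of-the-atomic-series operator used in Theorem \ref{B-trace-1} and implicitly here. I expect this to be the main thing to be careful about: one must check that $\tr(\mathbf{Ext}\,a^\Gamma_{j,m})$ computed as a pointwise restriction indeed equals the abstract trace of the same function, which is guaranteed because $\mathbf{Ext}\,a^\Gamma_{j,m}$ arises (up to passing from $\Omega$ to $\rn$ via Remark \ref{extr_char}) from the genuinely continuous Whitney extension, for which $\tr\circ{\rm Ext}\,a=a$ holds classically. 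All remaining ingredients — the sequence-space identity $\|\lambda|b^s_{p,q}(\Gamma)\|\sim\|\lambda|b^{s+\frac1p}_{p,q}(\Omega)\|$ and the $L_p$-convergence of the series — have already been verified in the proofs of Theorems \ref{B-trace-1} and \ref{B-trace-2}, so no new estimates are needed.
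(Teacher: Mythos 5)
Your proposal is correct and follows essentially the same route as the paper, which derives Theorem \ref{B-trace-lip} directly as the combination of Theorem \ref{B-trace-1} (boundedness of $\tr$) and Theorem \ref{B-trace-2} (the bounded non-linear extension operator). Your additional verification that $\tr\circ{Ext}=\mathrm{id}$ on $\mathbf{B}^s_{p,q}(\Gamma)$ spells out a point the paper leaves implicit, relying, as you note, on the identity $\tr\circ{\rm Ext}\,a=a$ for Lipschitz $a$ recorded right after \eqref{eq:ext:def}.
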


 
The above Theorem extends the trace results obtained in \cite[Th.~2.4]{sch10} from $C^k$ domains with $k>s$ to Lipschitz domains. \\
Furthermore, the trace results for  spaces of Triebel-Lizorkin type carry over as well to the case of Lipschitz domains. The proof follows  
\cite[Th.~2.6]{sch10} where the independence of the trace on $q$ was established for F-spaces. 
Let us mention that the sequence spaces $f^s_{p,q}(\Omega)$ are defined similarly as $b^s_{p,q}(\Omega)$,
cf. Definition \ref{def-seq-dom}, with $\ell_p$ and $\ell_q$ summation interchanged. The corresponding function spaces 
(denoted by $\mathfrak{F}^{s}_{p,q}(\Omega)$) are then defined as in Definition \ref{intr_char}.

The main ingredient in the study of traces for Triebel-Lizorkin spaces ${\mathfrak F}^s_{p,q}(\Omega)$ is then 
the fact that the corresponding sequence spaces
$f^s_{p,q}(\Gamma)$ are independent of $q$,
\beq\label{indep-on-q}
f^s_{p,q}(\Gamma)=b^s_{p,p}(\Gamma).
\eeq
A proof may be found in  \cite[Prop.~9.22, p.~394]{T-F3} for $\Gamma$ being a compact porous set in $\rn$ with \cite{frj90} as an important forerunner. 
In \cite[Prop.~3.6]{T-wave-dom}
it is shown that the boundaries $\partial\Omega=\Gamma$ of $(\varepsilon,\delta)$-domains $\Omega$  are  porous. Therefore, this result is also 
true for boundaries of Lipschitz domains.
 
For completeness we  state the trace results for F-spaces below.
 
\begin{corollary}\label{F-trace-lip}
Let $0<p<\infty$, $0<q\leq\infty$, $0<s<1$, and let $\Omega\subset \rn$ be a bounded Lipschitz domain with boundary $\Gamma$.
Then
\beq\label{trace-F-dom}
\tr \mathfrak{F}^{s+\frac 1p}_{p,q}(\Omega)=\mathbf{B}^{s}_{p,p}(\Gamma).
\eeq
\end{corollary}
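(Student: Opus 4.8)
The plan is to transfer the two-part scheme behind Theorem~\ref{B-trace-lip} --- the boundedness statement of Theorem~\ref{B-trace-1} together with the extension statement of Theorem~\ref{B-trace-2} --- to the Triebel--Lizorkin setting, the only genuinely new input being the $q$-independence \eqref{indep-on-q} of the boundary sequence spaces. Recall that $\mathfrak{F}^{s+\frac 1p}_{p,q}(\Omega)$ consists of those $f\in L_p(\Omega)$ admitting an atomic decomposition $f={\sum_{j,m}}^{j,\Omega}\lambda_{j,m}a_{j,m}$ with $K$-atoms $a_{j,m}$ and $\lambda$ in the sequence space $f^{s+\frac 1p}_{p,q}(\Omega)$ (the roles of the $\ell_p$- and $\ell_q$-summations in Definition~\ref{def-seq-dom} being interchanged). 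A preliminary step is to record the F-analogue of Theorem~\ref{th:atom-dec}: that the $K$-atoms in this representation may equivalently be replaced by non-smooth $(\sigma,p)$-atoms with $\sigma>s+\frac 1p$. I expect its proof to be a verbatim copy of the one given above for the Besov case, with $b^s_{p,q}$ replaced by $f^s_{p,q}$ everywhere and the scalar inequalities for the $j$-summation (in particular \eqref{eq:todo3}) applied pointwise in the cube variable before taking the $L_p$-(quasi-)norm.

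For the boundedness of the trace I would fix $s<s'<1$, take a smooth $f\in\mathfrak{F}^{s+\frac 1p}_{p,q}(\Omega)$ with an optimal representation $f={\sum_{j,m}}^{j,\Omega}\lambda_{j,m}a_{j,m}$, $\|f|\mathfrak{F}^{s+\frac 1p}_{p,q}(\Omega)\|\sim\|\lambda|f^{s+\frac 1p}_{p,q}(\Omega)\|$, and set $\tr f={\sum_{j,m}}^{j,\Gamma}\lambda_{j,m}a^{\Gamma}_{j,m}$ exactly as in the proof of Theorem~\ref{B-trace-1}. That the $a^{\Gamma}_{j,m}$ are $\Lip^{\Gamma}$-atoms, and that the series converges in $L_p(\Gamma)$, follows word for word from Theorem~\ref{B-trace-1}(i),(iii). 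The decisive estimate is, at the level of sequence spaces,
\[
\|\lambda|b^s_{p,p}(\Gamma)\|\;\lesssim\;\|\lambda|f^{s+\frac 1p}_{p,q}(\Omega)\|,
\]
which by \eqref{indep-on-q} is the same as $\|\lambda|f^s_{p,q}(\Gamma)\|\lesssim\|\lambda|f^{s+\frac 1p}_{p,q}(\Omega)\|$; this is the trace inequality for F-type sequence spaces and relies on the fact that $\Gamma$, being the boundary of a Lipschitz (hence $(\varepsilon,\delta)$-) domain, is porous, cf. \cite[Prop.~3.6]{T-wave-dom}, \cite[Prop.~9.22, p.~394]{T-F3}, and \cite[Th.~2.6]{sch10}. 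Theorem~\ref{th:at-bd-1} then gives $\tr f\in\mathbf{B}^s_{p,p}(\Gamma)$ with $\|\tr f|\mathbf{B}^s_{p,p}(\Gamma)\|\lesssim\|f|\mathfrak{F}^{s+\frac 1p}_{p,q}(\Omega)\|$, and one extends $\tr$ to all of $\mathfrak{F}^{s+\frac 1p}_{p,q}(\Omega)$ by completion, the case $q=\infty$ being treated via embeddings as in Remark~\ref{trace_infty}.

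For the converse I would start from $f\in\mathbf{B}^s_{p,p}(\Gamma)$ with an optimal $\Lip^{\Gamma}$-atomic decomposition $f=\sum_{j,m}\lambda_{j,m}a^{\Gamma}_{j,m}$, $\|f|\mathbf{B}^s_{p,p}(\Gamma)\|\sim\|\lambda|b^s_{p,p}(\Gamma)\|$ (Theorem~\ref{th:at-bd-1}), apply the non-linear operator $\mathbf{Ext}$ of Lemma~\ref{ext-atom}, and put $\mathrm{Ext}\,f:=\sum_{j,m}\lambda_{j,m}(\mathbf{Ext}\,a^{\Gamma}_{j,m})|_{\Omega}$. Since the $\mathbf{Ext}\,a^{\Gamma}_{j,m}$ are $(s'+\frac 1p,p)$-atoms on $\rn$, the F-analogue of Theorem~\ref{th:atom-dec} identifies this as an admissible non-smooth atomic decomposition in $\mathfrak{F}^{s+\frac 1p}_{p,q}(\Omega)$, convergent in $L_p(\Omega)$ (Step~3 of Theorem~\ref{th:at-bd-1}); the reverse sequence-space estimate $\|\lambda|f^{s+\frac 1p}_{p,q}(\Omega)\|\lesssim\|\lambda|f^s_{p,q}(\Gamma)\|=\|\lambda|b^s_{p,p}(\Gamma)\|$ for $\lambda$ carried by cubes meeting $\Gamma$ (again porosity, combined with \eqref{indep-on-q}) then yields boundedness of $\mathrm{Ext}$, and $\tr\circ\mathrm{Ext}=\mathrm{id}$ on $\mathbf{B}^s_{p,p}(\Gamma)$ follows as in Theorem~\ref{B-trace-2}. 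Combining the two directions proves \eqref{trace-F-dom}.

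The hard part is not the atomic bookkeeping --- which merely transcribes the proofs of Theorems~\ref{B-trace-1} and \ref{B-trace-2} --- but the two sequence-space trace inequalities relating $f^{s+\frac 1p}_{p,q}(\Omega)$, restricted to coefficients attached to cubes meeting $\Gamma$, with $f^s_{p,q}(\Gamma)$, together with the identification $f^s_{p,q}(\Gamma)=b^s_{p,p}(\Gamma)$ of \eqref{indep-on-q}. It is here that the geometry of the boundary enters in an essential way, through porosity of Lipschitz boundaries, and this is really the substance of the corollary; once it is available --- and once the routine F-version of the non-smooth atomic decomposition, Theorem~\ref{th:atom-dec}, has been checked --- the argument is purely formal.
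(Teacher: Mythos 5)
Your proposal follows essentially the same route as the paper: the paper's own (very brief) argument likewise reduces everything to the $q$-independence $f^s_{p,q}(\Gamma)=b^s_{p,p}(\Gamma)$ of \eqref{indep-on-q}, valid because Lipschitz boundaries are porous, and then runs the two-sided atomic scheme of Theorems \ref{B-trace-1} and \ref{B-trace-2} with $f^{s+\frac 1p}_{p,q}(\Omega)$ in place of $b^{s+\frac 1p}_{p,q}(\Omega)$, exactly as you describe. Your identification of the sequence-space trace inequalities and the F-analogue of Theorem \ref{th:atom-dec} as the only points needing verification matches the paper's reliance on \cite[Th.~2.6]{sch10} and \cite[Prop.~9.22]{T-F3} for precisely those facts.
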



\subsection{The limiting case}

We briefly discuss what happens in the limiting case $s=0$. In \cite[Th.~2.7]{sch11a} traces for Besov and Triebel-Lizorkin spaces on
$d$-sets $\Gamma$, $0<d<n$, were studied. In particular, it was shown that for $0<p<\infty$ and $0<q\leq\infty$,
\beq
\tr \mathbf{B}^{\frac{n-d}{p}}_{p,q}(\rn)=L_p(\Gamma), \qquad 0<q\leq\min(1,p),
\eeq
 and
\beq
\tr \mathfrak{F}^{\frac{n-d}{p}}_{p,q}(\rn)=L_p(\Gamma), \qquad 0<p\leq 1.
\eeq
Since the boundary $\Gamma$ of a Lipschitz domain $\Omega$ is a $d$-set with $d=n-1$ the results  follow almost immediately from 
these previous results, using the fact that the B- and F-spaces on domains $\Omega$ are defined as restrictions of the corresponding spaces on
$\rn$, cf. Remark \ref{extr_char}.

\begin{corollary}\label{B-trace-lim-n}
Let $\Omega$ be a bounded Lipschitz domain with boundary $\Gamma$. Furthermore, let $0<p<\infty$ and $0<q\leq\infty$.
\begin{itemize}
\item[(i)] Then
\beq\label{B-tr-lim-case}
\tr \mathbf{B}^{\frac{1}{p}}_{p,q}(\Omega)=L_p(\Gamma), \qquad 0<q\leq\min(1,p).
\eeq
\item[(ii)] Furthermore,
\beq\label{F-tr-lim-case}
\tr \mathfrak{F}^{\frac{1}{p}}_{p,q}(\Omega)=L_p(\Gamma), \qquad 0<p\leq 1.
\eeq
\end{itemize}
\end{corollary}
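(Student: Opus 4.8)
The plan is to reduce both assertions to the corresponding trace theorems on $\rn$ established in \cite[Th.~2.7]{sch11a} and then to transfer them to $\Omega$ by the same restriction/extension device already used throughout Sections \ref{sect-3} and \ref{sec-3}. First I would recall that the boundary $\Gamma=\partial\Omega$ of a bounded Lipschitz domain is a $d$-set with $d=n-1$: the surface measure $\ud\sigma$ is equivalent to the restriction of the $(n-1)$-dimensional Hausdorff measure to $\Gamma$ and satisfies $\sigma(B(\gamma,r)\cap\Gamma)\sim r^{n-1}$ for $\gamma\in\Gamma$ and $0<r\le 1$, which is classical and also follows from the bi-Lipschitz charts $\psi^{(j)}$ of Definition \ref{def-lip-1}. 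Since $d=n-1$ forces $\frac{n-d}{p}=\frac1p$, \cite[Th.~2.7]{sch11a} gives
\[
\tr\,\mathbf{B}^{1/p}_{p,q}(\rn)=L_p(\Gamma),\quad 0<q\le\min(1,p),\qquad
\tr\,\mathfrak{F}^{1/p}_{p,q}(\rn)=L_p(\Gamma),\quad 0<p\le 1,
\]
where in each case $\tr$ is a bounded and surjective operator onto $L_p(\Gamma)$.

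Next I would pass from $\rn$ to $\Omega$. For boundedness, take $f\in\mathbf{B}^{1/p}_{p,q}(\Omega)$; by Remark \ref{extr_char} (e.g. via the extension operator of \cite{dVS93}) there is $g\in\mathbf{B}^{1/p}_{p,q}(\rn)$ with $g|_\Omega=f$ and $\|g|\mathbf{B}^{1/p}_{p,q}(\rn)\|\lesssim\|f|\mathbf{B}^{1/p}_{p,q}(\Omega)\|$. Because for smooth functions the trace is simply the pointwise restriction to $\Gamma\subset\overline\Omega$, and hence does not depend on the chosen extension, the trace operator on $\Omega$ defined in Section \ref{sect-3} is consistent with the one on $\rn$, so that
\[
\|\tr f|L_p(\Gamma)\|=\|\tr g|L_p(\Gamma)\|\lesssim\|g|\mathbf{B}^{1/p}_{p,q}(\rn)\|\lesssim\|f|\mathbf{B}^{1/p}_{p,q}(\Omega)\|.
\]
For surjectivity, given $\varphi\in L_p(\Gamma)$ the $\rn$-result provides $g\in\mathbf{B}^{1/p}_{p,q}(\rn)$ with $\tr g=\varphi$; then $f:=g|_\Omega\in\mathbf{B}^{1/p}_{p,q}(\Omega)$ satisfies $\tr f=\varphi$. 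This proves (i); the proof of (ii) is a verbatim transcription with $\mathbf{B}^{1/p}_{p,q}$ replaced by $\mathfrak{F}^{1/p}_{p,q}$ and with the corresponding half of \cite[Th.~2.7]{sch11a}, noting that $\mathfrak{F}^{1/p}_{p,q}(\Omega)$ is likewise defined by restriction of $\mathfrak{F}^{1/p}_{p,q}(\rn)$.

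The only point that needs a little care, and which I expect to be the main (rather modest) obstacle, is the consistency just invoked: one must check that the trace of $f\in\mathbf{B}^{1/p}_{p,q}(\Omega)$, originally obtained by completion from $\mathcal S(\Omega)$, agrees with $\tr g$ for every $\mathbf{B}$-extension $g$ of $f$. This is handled exactly as point (iv) in the proof of Theorem \ref{B-trace-1}: smooth functions admit atomic representations converging pointwise, both trace operators restrict smooth functions to $\Gamma$ pointwise, and both are bounded, so they coincide by density. Everything else is a direct reading-off of the known statements on $\rn$.
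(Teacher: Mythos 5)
Your proposal is correct and is essentially the paper's own argument: the paper likewise reduces the statement to the trace theorem for $d$-sets in \cite[Th.~2.7]{sch11a} with $d=n-1$ and then transfers it to $\Omega$ using that the B- and F-spaces on domains are defined by restriction from $\rn$ (Remark \ref{extr_char}). You merely spell out in more detail the boundedness/surjectivity transfer and the consistency of the trace operators, which the paper leaves as ``follows almost immediately.''
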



\section{Pointwise multipliers in function spaces}

As an application we now use our results on non-smooth atomic decompositions to deal with pointwise multipliers in the respective function spaces. \\

{A function} $m$ in $L_{\min(1,p)}^{loc}(\rn)$ is called a {\em pointwise multiplier} for $\Bd(\rn)$ if\\
\[f\mapsto mf\]
generates a bounded map in $\Bd(\rn)$. The collection of all multipliers for $\Bd(\rn)$ is denoted by $M(\Bd(\rn))$. In the following, let $\psi$ stand for a non-negative $C^{\infty}$ function with 
\beq\label{psi-1}
\supp \psi\subset \{y\in\rn: |y|\leq \sqrt{n}\}
\eeq
and 
\beq\label{psi-2}
\sum_{l\in\zn}\psi(x-l)=1, \qquad x\in\rn.
\eeq

\begin{definition}\label{def-Bselfs}
Let $s>0$ and $0<p,q\leq \infty$. We define the space $\Bselfs(\rn)$ to be the set of all $f\in L_{\min(1,p)}^{loc}(\rn)$ such that 
\beq\label{Bselfs}
\|f|\mathbf{B}^s_{p,q,\selfs}(\rn)\|:=\sup_{j\in\nat_0, l\in\zn}\|\psi(\cdot -l)f(2^{-j}\cdot)|\Bd(\rn)\|
\eeq
is finite. 
\end{definition}

\remark{

The study of pointwise multipliers is one of the key problems of the theory of function spaces. As far as classical Besov spaces and (fractional) Sobolev spaces  with $p>1$ are concerned we refer to \cite{mazya}, \cite{MaSh85}, and \cite{MaSh09}. Pointwise multipliers in general spaces $\B(\rn)$ and $\F(\rn)$ have been studied in great detail in \cite[Ch.~4]{RS96}.\\ 
Selfsimilar spaces were first introduced in \cite{tri03} and then considered in \cite[Sect.~2.3]{T-F3}.  Corresponding results for anisotropic function spaces may be found in \cite{MPP07}. We also mention their forerunners, the uniform spaces $\mathbf{B}^s_{p,q,\mathrm{unif}}(\rn)$, studied in detail in \cite[Sect.~4.9]{RS96}. As stated in \cite{KS02}, for these spaces it is known that 
\[
M(\Bd(\rn))=\mathbf{B}^s_{p,q,\mathrm{unif}}(\rn), \qquad 1\leq p\leq q \leq \infty, \quad s>\frac np,
\]
cf. \cite{SS99} concerning the proof.  
Selfsimilar spaces are also closely connected with pointwise multipliers. We shall use the abbreviation $$\mathbf{B}^s_{p,\selfs}(\rn):=\mathbf{B}^s_{p,p,\selfs}(\rn).$$ 
One can easily show 
\beq\label{emb_Bselfs}
\Bselfs(\rn)\hookrightarrow L_{\infty}(\rn).
\eeq
To see this applying homogeneity gives
\[
\|\psi(\cdot -l)f(2^{-j}\cdot)|\Bd(\rn)\|\sim 2^{j\frac np}\|\psi(2^j\cdot -l)f|L_p(\rn)\|+2^{-j(s-\frac np)}\left(\int_0^1 t^{-sq}{\omega_r(\psi(2^j\cdot -l)f,t)_p}^q\frac{\ud t}{t}\right)^{1/q}
\]
uniformly for all $j\in\nat_0$ and $l\in\zn$. Consequently,
\beq\label{leb-p}
2^{jn}\int_{\rn}|\psi(2^jy-l)|^p|f(y)|^p\ud y\leq c\|f|\Bselfs(\rn)\|^p.
\eeq
Thus, the right-hand side of \eqref{leb-p} is just a uniform bound for $|f(\cdot)|^p$ at its Lebesgue points, cf. \cite[Cor. p.13]{stein93}, which proves the desired embedding  \eqref{emb_Bselfs}.  

}

\begin{definition}
Let $s>0$ and $0<p,q\leq \infty$. We define
\[
\mathbf{B}^{s+}_{p,q,\selfs}(\rn):=\bigcup_{\sigma>s}\mathbf{B}_{p,q,\selfs}^{\sigma}(\rn).
\]

\end{definition}

We have the following relation between pointwise multipliers and self-similar spaces.

\begin{theorem}\label{th:multipliers-1}
Let $s>0$ and $0<p,q\leq \infty$. Then
\bit
\item[(i)] $\ds \mathbf{B}^{s+}_{p,q,\selfs}(\rn)\subset M(\Bd(\rn))\hookrightarrow \mathbf{B}^{s}_{p,q,\selfs}(\rn)$
\item[(ii)] Additionally, if $0<p\leq 1$,
\[
M(\mathbf{B}^s_p(\rn))=\mathbf{B}^{s}_{p,\selfs}(\rn). 
\] 
\eit

\end{theorem}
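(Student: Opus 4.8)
The plan is to prove the chain of inclusions in (i) in two separate parts and then derive (ii) as a corollary by a matching-exponents argument. For the embedding $M(\Bd(\rn))\hookrightarrow \mathbf{B}^{s}_{p,q,\selfs}(\rn)$, I would start from the observation that the constant function $1$ need not be an admissible test object, so instead I test the multiplier $m$ against a fixed smooth bump. More precisely, pick $g\in C_0^\infty(\rn)$ with $g\equiv 1$ on $\{|y|\le\sqrt n\}$; then for $m\in M(\Bd(\rn))$ the function $\psi(\cdot-l)m(2^{-j}\cdot)=\psi(\cdot-l)\bigl(m(2^{-j}\cdot)g\bigr)$ can be written as $(\text{pointwise multiplier } \psi(\cdot-l))$ applied to the dilate-and-cut-off of $m$. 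Using that $\psi(\cdot-l)$ is a uniformly bounded multiplier (it is a translate of a fixed $C_0^\infty$ function, so Proposition \ref{prop-mult-diffeo}(ii) or \cite[Ch.~4]{RS96} applies with constants independent of $l$), and that dilation $f\mapsto f(2^{-j}\cdot)$ interacts with the multiplier norm in a controlled way (via homogeneity, Theorem \ref{hom-B}, applied on the support), one bounds $\|\psi(\cdot-l)m(2^{-j}\cdot)|\Bd(\rn)\|$ by $C\,\|m\|_{M(\Bd(\rn))}$ uniformly in $j,l$. Taking the supremum gives $m\in\mathbf{B}^{s}_{p,q,\selfs}(\rn)$. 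The delicate point is keeping the constants uniform in $j$; the self-similar normalization \eqref{Bselfs} is designed precisely to absorb the dilation factors, so I expect this to go through cleanly once the dilation bookkeeping is set up.

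For the harder inclusion $\mathbf{B}^{s+}_{p,q,\selfs}(\rn)\subset M(\Bd(\rn))$, I would use the non-smooth atomic decomposition, Theorem \ref{th:atom-dec}. Fix $m\in\mathbf{B}^{\sigma}_{p,q,\selfs}(\rn)$ with $\sigma>s$, and fix $f\in\Bd(\rn)$ with an optimal smooth $K$-atomic decomposition $f=\sum_{j,m'}\lambda_{j,m'}a_{j,m'}$, $\|\lambda|b^s_{p,q}\|\sim\|f|\Bd(\rn)\|$, choosing $K$ large (say $K>\sigma$). Then $mf=\sum_{j,m'}\lambda_{j,m'}(m\,a_{j,m'})$, and the whole point is that $m\,a_{j,m'}$ is, up to a harmless constant, a $(\sigma,p)$-atom in the sense of Definition \ref{def-atoms-ns2}: the support condition is inherited from $a_{j,m'}$ (enlarging $d$ by a fixed factor), and the crucial normalization $\|(m\,a_{j,m'})(2^{-j}\cdot)|\mathbf{B}^\sigma_p(\rn)\|\le C$ follows by multiplying out, using that $a_{j,m'}(2^{-j}\cdot)\in C^K$ with $C^K$-norm $\le1$ and support in a fixed cube, applying the pointwise-multiplier inequality for $\mathbf{B}^\sigma_p$ on a bounded domain (as in the proof of Proposition \ref{prop-mult-diffeo}(ii), via \cite[Sect.~4.6.1]{RS96}), and then invoking the self-similar bound $\sup_{k,l}\|\psi(\cdot-l)m(2^{-k}\cdot)|\mathbf{B}^\sigma_p(\rn)\|<\infty$ — note the atom lives at scale $2^{-j}$, which is exactly the dilation built into the definition of $\Bselfs$. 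Since $\sigma>s$, Theorem \ref{th:atom-dec} applies and yields $\|mf|\Bd(\rn)\|\lesssim\|\lambda|b^s_{p,q}\|\sim\|f|\Bd(\rn)\|$, with the implied constant controlled by $\|m|\mathbf{B}^\sigma_{p,q,\selfs}(\rn)\|$. Convergence of $\sum\lambda_{j,m'}(m\,a_{j,m'})$ in $L_p$ follows from the boundedness of $m$ (which is guaranteed by \eqref{emb_Bselfs}) together with the $L_p$-convergence of the original decomposition.

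For part (ii), the case $0<p\le1$ with $q=p$: the inclusion $\mathbf{B}^{s}_{p,\selfs}(\rn)\subset M(\mathbf{B}^s_p(\rn))$ is the content that must be upgraded from $\sigma>s$ to $\sigma=s$, and the embedding $M(\mathbf{B}^s_p(\rn))\hookrightarrow\mathbf{B}^{s}_{p,\selfs}(\rn)$ is already in (i). To close the gap I would redo the atomic argument of the previous paragraph but now with $p=q$, where the sequence space $b^s_{p,p}$ is simply a weighted $\ell_p$ space and the interchange-of-summation estimates in the proof of Theorem \ref{th:atom-dec} become elementary (no $\ell_{q/p}$ triangle inequality is needed, and one can afford $\varepsilon=0$). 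Concretely, with $p=q$ one checks directly, mimicking the $q/p\le1$ computation in the proof of Theorem \ref{th:atom-dec} but with $\sigma=s$, that $\|\nu|b^s_{p,p}\|\lesssim\|\lambda|b^s_{p,p}\|$ provided $\sup_{k,l}\|(m\,a_{k,l})(2^{-k}\cdot)|\mathbf{B}^s_p(\rn)\|\le C$, and this last bound is exactly what membership of $m$ in $\mathbf{B}^{s}_{p,\selfs}(\rn)$ provides (again using the $C^K$-normalization of the smooth atoms and the multiplier inequality in $\mathbf{B}^s_p$ for compactly supported functions). I expect the main obstacle throughout to be the uniform-in-scale control of the multiplier constants — verifying that $m\,a_{j,m'}$ is a $(\sigma,p)$-atom (or, in (ii), an $(s,p)$-atom) with constant independent of $j$ and $m'$ — since this is where the self-similar normalization, the dilation homogeneity, and the pointwise-multiplier estimates in $\mathbf{B}^\sigma_p$ all have to be combined carefully; once that is in hand, the rest is an application of the decomposition theorems already proved.
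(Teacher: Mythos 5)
Your proposal follows essentially the same route as the paper's proof: testing $m$ against dilated bumps together with the homogeneity Theorem \ref{hom-B} for the embedding $M(\Bd(\rn))\hookrightarrow \mathbf{B}^{s}_{p,q,\selfs}(\rn)$, the non-smooth atomic decomposition of Theorem \ref{th:atom-dec} with the factorization of $m(2^{-j}\cdot)a_{j,m'}(2^{-j}\cdot)$ through $\psi$ and $\psi^{-1}$ for the converse inclusion, and the $p$-triangle inequality (equivalently, your observation that no $\varepsilon$-loss is needed when $q=p\le 1$) for part (ii). The only point needing a one-line repair is your claim that the selfsimilar hypothesis yields $\sup_{k,l}\|\psi(\cdot-l)m(2^{-k}\cdot)|\mathbf{B}^{\sigma}_{p}(\rn)\|<\infty$: membership in $\mathbf{B}^{\sigma}_{p,q,\selfs}(\rn)$ controls the $\mathbf{B}^{\sigma}_{p,q}$-norm rather than the $\mathbf{B}^{\sigma}_{p,p}$-norm, so for $q>p$ you must pass to an intermediate $\sigma'\in(s,\sigma)$ and verify that $m\,a_{j,m'}$ is a $(\sigma',p)$-atom, exactly as the paper does.
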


\proofstart
We first prove the right-hand side embedding in (i). Let $m\in M(\Bd(\rn))$. An application of the homogeneity property from Theorem \ref{hom-B} yields 
\begin{eqnarray*}
\|\psi(\cdot -l)m(2^{-j}\cdot)|\Bd(\rn)\|
&\sim & 2^{-j(s-\frac np)}\|\psi(2^j\cdot -l)m|\Bd(\rn)\|\\
&\lesssim & 2^{-j(s-\frac np)} \|m|M(\Bd(\rn))\|\cdot\|\psi(2^j\cdot -l)|\Bd(\rn)\|\\
& = & 2^{-j(s-\frac np)}\|m|M(\Bd(\rn))\|\cdot\|\psi(2^j\cdot )|\Bd(\rn)\|\\
&\sim& \|m|M(\Bd(\rn))\|\|\psi|\Bd(\rn)\|\lesssim \|m|M(\Bd(\rn))\|
\end{eqnarray*}
for all $l\in\zn$, $j\in\nat_0$, and hence,
\begin{eqnarray*}
\|m|\Bselfs(\rn)\|&=&\sup_{j\in\nat_0, l\in\zn}\|\psi(\cdot -l)m(2^{-j})|\Bd(\rn)\|\\
&\lesssim & \|m|M(\Bd(\rn))\|. 
\end{eqnarray*}
We make use of the non-smooth atomic decompositions for $\Bd(\rn)$ from Theorem \ref{th:atom-dec} in order to prove the first inclusion in (i). Let $m\in \mathbf{B}^{\sigma}_{p,q,\selfs}$ with $\sigma>s$.  Let $f\in \Bd(\rn)$ with optimal smooth atomic decomposition
\beq\label{m-00}
f=\sum_{j=0}^{\infty}\sum_{l\in\zn}\lambda_{j,l}a_{j,l}\quad \text{ with }\quad\|f|\Bd(\rn)\|\sim \|\lambda|b^s_{p,q}\|,
\eeq
where $a_{j,m}$ are $K$-atoms with $K>\sigma$. Then 
\beq\label{m-0}
mf=\sum_{j=0}^{\infty}\sum_{l\in\zn}\lambda_{j,l}\left( m a_{j,l}\right),
\eeq
and we wish to prove that, up to normalizing constants, the $\ ma_{j,l}\ $ are $(\sigma,p)$-atoms. The support condition is obvious:
\[
\supp m a_{j,l}\subset \supp a_{j,l}\subset dQ_{j,l}, \qquad j\in\nat_0, l\in\zn.
\]
If $l=0$ we put $a_j=a_{j,l}$. Note that 
\[
\supp a_{j}(2^{-j})\subset \{y: |y_i|\leq \frac d2\}
\]
and we can assume that 
\[
\psi(y)>0 \qquad \text{if}\quad y\in\{x:|x_i|\leq d\}.
\]
Then -- using  multiplier assertions from \cite[Prop.~2.15(ii)]{sch10} --  we have for any $g\in \mathbf{B}^{\sigma}_{p,q}(\rn)$, 
\begin{eqnarray*}
\|a_j(2^{-j})\psi^{-1}g|\mathbf{B}^{\sigma}_{p,q}(\rn)\|
&\lesssim & \|a_j(2^{-j})\psi^{-1}|C^K(\rn)\|\|g|\mathbf{B}^{\sigma}_{p,q}(\rn)\|\\
&\lesssim &  \|g|\mathbf{B}^{\sigma}_{p,q}(\rn)\|
\end{eqnarray*}
and hence
\beq\label{m-1}
\|a_j(2^{-j})\psi^{-1}|M(\mathbf{B}^{\sigma}_{p,q}(\rn))\|\lesssim 1, \qquad j\in \nat_0.
\eeq
By \eqref{m-1} and the homogeneity property we then get, for any $\sigma>\sigma'>s$ and $j\in\nat_0$,
\begin{eqnarray}
\|(m a_j)(2^{-j}\cdot)|\mathbf{B}^{\sigma'}_{p}(\rn)\|&\lesssim &\|m(2^{-j}\cdot)a_j(2^{-j}\cdot)|\mathbf{B}^{\sigma}_{p,q}(\rn)\|\notag\\
&\lesssim & \|a_j(2^{-j}\cdot)\psi^{-1}|M(\mathbf{B}^{\sigma}_{p,q}(\rn))\| \|m(2^{-j}\cdot)\psi|\mathbf{B}^{\sigma}_{p,q}(\rn)\|\notag\\
&\lesssim & \|m(2^{-j}\cdot)\psi|\mathbf{B}^{\sigma}_{p,q}(\rn)\|\label{m-2}.
\end{eqnarray} 
In the case of $a_{j,l}$ with $l\in\zn$ one arrives at \eqref{m-2} with $a_{j,l}$ and $\psi(\cdot -l)$ in place of $a_j$ and $\psi$, respectively. Hence
\begin{eqnarray}
\|m a_{j,l}(2^{-j}\cdot)|\mathbf{B}^{\sigma'}_{p}(\rn)\|& \lesssim & \sup_{j,l}\|m(2^{-j}\cdot)\psi(\cdot - l)|\mathbf{B}^{\sigma}_{p,q}(\rn)\|\notag\\
& = & \|m|\mathbf{B}^{\sigma}_{p,q,\selfs}(\rn)\|, \qquad j\in\nat_0, l\in\zn,
\end{eqnarray}
and therefore, $m a_{j,l}$ is a $(\sigma',p)$-atom where $\sigma'>s$. By Theorem \ref{th:atom-dec}, in view of \eqref{m-0}, $mf\in\mathbf{B}^{s}_{p,q}(\rn)$ and 
\begin{eqnarray*}
\|mf|\mathbf{B}^{s}_{p,q}(\rn)\|\leq \|\lambda|b^s_{p,q}\|\|m|\mathbf{B}^{\sigma}_{p,q,\selfs}(\rn)\|\sim \|f|\mathbf{B}^s_{p,q}\|\|m|\mathbf{B}^{\sigma}_{p,q,\selfs}(\rn)\|,
\end{eqnarray*}
which completes the proof of (i). \\

\smallskip 

We now prove (ii). Restricting ourselves to $p=q$, let now $m\in \mathbf{B}^s_{p,\selfs}(\rn)$. 
We can modify \eqref{m-2} by choosing $\sigma'=\sigma=s$,
\begin{eqnarray}
\|(m a_j)(2^{-j}\cdot)|\mathbf{B}^{s}_{p}(\rn)\| &= &\|m(2^{-j}\cdot)a_j(2^{-j}\cdot)|\mathbf{B}^{s}_{p}(\rn)\|\notag\\
&\lesssim & \|a_j(2^{-j}\cdot)\psi^{-1}|M(\mathbf{B}^{s}_{p}(\rn))\| \|m(2^{-j}\cdot)\psi|\mathbf{B}^{s}_{p}(\rn)\|\notag\\
&\lesssim & \|m(2^{-j}\cdot)\psi|\mathbf{B}^{s}_{p}(\rn)\|\label{m-4},
\end{eqnarray} 
yielding for general atoms $a_{j,l}$, 
\begin{eqnarray}
\|m a_{j,l}(2^{-j}\cdot)|\mathbf{B}^{s}_{p,}(\rn)\|& \lesssim & \sup_{j,l}\|m(2^{-j}\cdot)\psi(\cdot - l)|\mathbf{B}^{s}_{p}(\rn)\|\notag\\
& = & \|m|\mathbf{B}^{s}_{p,\selfs}(\rn)\|, \qquad j\in\nat_0, l\in\zn.\label{m-5}
\end{eqnarray}
Since $p\leq 1$, we have that  $\mathbf{B}^s_{p}(\rn)$ is a $p$-Banach space. From \eqref{m-00}, using \eqref{m-0} and \eqref{m-5}, we obtain 
\begin{eqnarray}
\|mf|\mathbf{B}^s_{p}(\rn)\|^p
&\leq & \sum_{j=0}^{\infty}\sum_{l\in\zn}|\lambda_{j,l}|^p2^{j(s-\frac np)p}2^{-j(s-\frac np)p}\|m a_{j,l}|\mathbf{B}^s_{p}(\rn)\|^p\notag\\
&\sim &\|\lambda|b^s_{p,p}\|^p\|(m a_{j,l})(2^{-j}\cdot)|\mathbf{B}^s_{p}(\rn)\|^p\notag\\
&\lesssim &\|\lambda|b^s_{p,p}\|^p\|m|\mathbf{B}^{s}_{p,\selfs}(\rn)\|^p.\label{open-1}
\end{eqnarray}
Hence $m\in M(\mathbf{B}^s_{p}(\rn))$ and, moreover, $\mathbf{B}^s_{p,\selfs}(\rn)\hookrightarrow M(\mathbf{B}^s_p(\rn))$. The other embedding follows from part (i). 
\proofend

\remark{It remains open whether it is possible or not to generalize Theorem \ref{th:multipliers-1}(ii) to the case when $p\neq q$. The problem in the proof given above is the estimate \eqref{open-1}, which only holds if $p=q$. }

\paragraph{Characteristic functions as multipliers}

The final part of this work is devoted to the question in which function spaces the characteristic function $\chi_{\Omega}$ of a domain $\Omega\subset\rn$ is a pointwise multiplier. We contribute to this question mainly as an application of Theorem \ref{th:multipliers-1}. The results shed some light on a relationship between some fundamental notion of fractal geometry and pointwise multipliers in function spaces. For complementary remarks and studies in this direction we refer to \cite{tri03}. \\
There are further considerations of a similar kind in the literature, asking for geometric conditions on the domain $\Omega$ such that the corresponding characteristic function $\chi_{\Omega}$ provides multiplier properties, cf. \cite{Gu1, Gu2}, \cite{frj90}, and \cite[Sect.~4.6.3]{RS96}.

\begin{definition}\label{def-h-set}
Let $\Gamma$ be a non-empty compact set in $\rn$. Let $h$ be a positive non-decreasing function on the interval $(0,1]$. Then $\Gamma$ is called a $h$-set, if there is a finite Radon measure $\mu\in\rn$ with 
\beq\label{h-set}
\supp \mu=\Gamma\quad \text{ and } \quad \mu(B(\gamma,r))\sim h(r), \qquad \gamma\in \Gamma, \; 0<r\leq 1. 
\eeq
\end{definition}

\remark{
A measure $\mu$ with \eqref{h-set} satisfies the so-called  {\em doubling condition}, meaning there is a constant $c>0$ such that 
\beq\label{dc}
\mu(B(\gamma, 2r))\leq c \mu(B(\gamma,r)), \qquad \gamma\in \Gamma, \; 0<r<1. 
\eeq
We refer to \cite[p. 476]{tri03} for further explanations.
}

\begin{theorem}\label{th:multipliers-2}
Let $\Omega$ be a bounded domain in $\rn$. Moreover, let  $\sigma>0$, $0<p<\infty$, $0<q\leq\infty$, and let $\Gamma=\partial \Omega$ be an $h$-set with 
\beq\label{m-23}
\sup_{j\in\nat_0}\sum_{k=0}^{\infty}2^{k\sigma q}\left(\frac{h(2^{-j})}{h(2^{-j-k})}2^{-kn}\right)^{q/p}<\infty,
\eeq
$($with the usual modifications if $q=\infty$$)$. 
Let $\mathbf{B}^{\sigma}_{p,q,\selfs}(\rn)$ be the spaces defined in \eqref{Bselfs}. Then
\[
\chi_{\Omega}\in \mathbf{B}^{\sigma}_{p,q,\selfs}(\rn).
\]
\end{theorem}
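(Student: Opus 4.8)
The plan is to estimate directly the self-similar quasi-norm \eqref{Bselfs} of $\chi_\Omega$, namely to bound $\|\psi(\cdot-l)\chi_\Omega(2^{-j}\cdot)|\mathbf{B}^\sigma_{p,q}(\rn)\|$ uniformly in $j\in\nat_0$ and $l\in\zn$. First I would reduce to the case $0<\sigma<1$: since $\chi_\Omega$ is bounded, once we know $\chi_\Omega\in\mathbf{B}^{\sigma_0}_{p,q,\selfs}$ for small $\sigma_0$ the general statement for $\sigma$ satisfying \eqref{m-23} is obtained by the same scheme; alternatively one works with the classical quasi-norm via $r$-th differences with $r>\sigma$ throughout, which is what I would actually do. The function $g_{j,l}:=\psi(\cdot-l)\chi_\Omega(2^{-j}\cdot)$ is supported in a fixed ball of radius $\sqrt n$, so by \eqref{class-B-dom} it suffices to control $\|g_{j,l}|L_p\|$ and the modulus-of-smoothness integral $\int_0^1 t^{-\sigma q}\omega_r(g_{j,l},t)_p^q\,\frac{\dint t}{t}$.

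The key geometric input is that $g_{j,l}$ is (up to the smooth cut-off $\psi(\cdot-l)$, whose derivatives are $O(1)$) the characteristic function of the rescaled domain $2^j\Omega$ intersected with a unit ball, and the boundary of $2^j\Omega$ is the $h$-set $\Gamma$ rescaled by $2^j$. The essential estimate is the standard one for $\omega_r$ of a characteristic function: $\omega_r(\chi_E,t)_p^p\lesssim |\{x:\mathrm{dist}(x,\partial E)\lesssim t\}|$, i.e. the difference $\Delta_h^r\chi_E$ can be nonzero only near $\partial E$. Thus for $2^{-k-1}\le t\le 2^{-k}$ (with $k$ ranging over $k\ge 0$; smaller $k$, i.e. $t$ of order $1$, is trivial since the support is bounded) I would estimate $\omega_r(g_{j,l},t)_p^p$ by the measure of the $t$-neighbourhood of $2^j\Gamma$ inside the unit ball. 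Using the $h$-set property \eqref{h-set} with the measure $\mu$ rescaled to $2^j\Gamma$: covering the unit-ball-portion of $2^j\Gamma$ by balls of radius $2^{-k}$, the doubling/$h$-set condition shows the number of such balls is $\sim h(2^{-j})/h(2^{-j-k})$ (the measure of a radius-$1$ ball centred on $2^j\Gamma$ is $\sim h(2^{-j})$, a radius-$2^{-k}$ ball has measure $\sim h(2^{-j-k})$), and each contributes volume $\sim 2^{-kn}$, so $\omega_r(g_{j,l},t)_p^p\lesssim \frac{h(2^{-j})}{h(2^{-j-k})}2^{-kn}$ for $t\sim 2^{-k}$. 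Here I would need to justify the covering count carefully — that is where the $h$-set hypothesis, and not merely e.g. a $d$-set hypothesis, is used — invoking the Remark after Definition \ref{def-h-set} on the doubling condition.

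Plugging this into the integral and discretising $\int_0^1 t^{-\sigma q}\omega_r(g_{j,l},t)_p^q\frac{\dint t}{t}\sim\sum_{k\ge0}2^{k\sigma q}\omega_r(g_{j,l},2^{-k})_p^q\lesssim\sum_{k\ge0}2^{k\sigma q}\left(\frac{h(2^{-j})}{h(2^{-j-k})}2^{-kn}\right)^{q/p}$, which is exactly the quantity assumed bounded uniformly in $j$ by \eqref{m-23}; the $L_p$-term is $\lesssim 1$ since $\|g_{j,l}\|_\infty\le 1$ on a bounded support. Taking the supremum over $j,l$ gives $\|\chi_\Omega|\mathbf{B}^\sigma_{p,q,\selfs}(\rn)\|<\infty$. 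I expect the main obstacle to be the rigorous version of the covering estimate for $\omega_r$ of the rescaled characteristic function — i.e. showing that the $t$-neighbourhood of $2^j\Gamma$ within a unit ball has measure $\lesssim \frac{h(2^{-j})}{h(2^{-j-k})}2^{-kn}$ uniformly in $j$ — which requires handling the two different scales ($2^{-j}$ for the "global" ball and $2^{-j-k}$ for the fine cover) via the $h$-set axiom and its doubling consequence; the rest is bookkeeping with the classical difference quasi-norm and Theorem \ref{hom-B}-type rescaling.
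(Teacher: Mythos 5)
Your argument is correct, but it takes a genuinely different route from the paper. The paper never touches the modulus of smoothness: it writes $\chi_\Omega$ as the sum of a resolution of unity $\{\varphi_l^k\}$ subordinate to the Whitney-type layers $\Omega^k=\{x\in\Omega:2^{-k-2}\le\mathrm{dist}(x,\Gamma)\le 2^{-k}\}$, observes that the $\varphi_l^k$ are smooth $K$-atoms at level $k$ with coefficients $1$, and reads the norm off the atomic sequence space; the only analytic input is the count $M_k\,h(2^{-k})\lesssim 1$ (and its rescaled version $M_k^j\,h(2^{-j-k})\lesssim h(2^{-j})$) of radius-$2^{-k}$ balls meeting the layer. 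You instead bound $\omega_r(\psi(\cdot-l)\chi_{2^j\Omega},t)_p^p$ by the volume of the $t$-neighbourhood of $2^j\Gamma$ inside the support of the cutoff, and the covering count you need --- at most $\sim h(2^{-j})/h(2^{-j-k})$ balls of radius $2^{-k}$, each contributing volume $2^{-kn}$ --- is exactly the same combinatorial fact, derived the same way from \eqref{h-set} and doubling. So the two proofs share their geometric core and differ in which characterization of $\Bd(\rn)$ absorbs it: the paper's stays entirely inside the atomic calculus and needs no product manipulations, while yours makes it transparent where \eqref{m-23} comes from (it is literally the discretized modulus-of-smoothness integral) and is consistent with the paper's own sharpness computation in the closing remark. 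Two points to nail down in your version: (a) treat the cutoff via the difference Leibniz rule $\Delta_h^r(fg)(x)=\sum_{i=0}^r\binom{r}{i}\Delta_h^if(x)\,\Delta_h^{r-i}g(x+ih)$, where the pure-$\psi$ term contributes $t^r$ with $r>\sigma$ and hence a convergent tail, and all other terms are supported in the $rt$-neighbourhood of $2^j\Gamma$; (b) localize the covering count to a fixed ball around $l$ and dispose of the trivial case where that ball misses $2^j\Gamma$. Your first suggested reduction to $0<\sigma<1$ is unnecessary (and as stated circular); your second option, using $r>\sigma$ differences throughout, is the right one.
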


\proofstart
It simplifies the argument, and causes no loss of generality, to assume $\mathrm{diam}\  \Omega<1$. We define
\[
\Omega^k=\{x\in\Omega:2^{-k-2}\leq \mathrm{dist}(x,\Gamma)\leq 2^{-k}\}, \qquad k\in\nat_0.
\]
Moreover, let 
\[
\{\varphi^k_l: k\in\nat_0, \; l=1,\ldots, M_k\}\subset C^{\infty}_0(\Omega)
\]
be a resolution of unity,
\beq \label{m-21}
\sum_{k\in\nat_0}\sum_{l=1}^{M_k}\varphi_l^k(x)=1\qquad \text{if } x\in\Omega,
\eeq
with
\[
\supp \varphi_l^k\subset \{x:|x-x_l^k|\leq 2^{-k}\}\subset \Omega^k
\]
and 
\[
|\uD^{\alpha} \varphi_l^k(x)|\lesssim 2^{|\alpha| k},\qquad  |\alpha|\leq K,
\]
where $K\in\nat$ with $K>\sigma$.  It is well known that resolutions of unity with the required properties exist. We now estimate the number $M_k$ in \eqref{m-21}. Combining the fact that the measure $\mu$ satisfies the doubling condition \eqref{dc} together with \eqref{h-set} we arrive at
\beq\label{m-22}
M_k h(2^{-k})\lesssim 1, \qquad k\in\nat_0. 
\eeq
Since the $\varphi_l^k$ in \eqref{m-21} are $K$-atoms according to Definition \ref{def-atoms}, we obtain
\beq\label{m-22a}
\|\chi_{\Omega}|\mathbf{B}^{\sigma}_{p,q}(\rn)\|^q\leq \sum_{k=0}^{\infty}2^{k(\sigma-n/p)q}M_k^{q/p}\lesssim \sum_{k=0}^{\infty}2^{k\sigma q}\left(\frac{2^{-kn}}{h(2^{-k})}\right)^{q/p}<\infty.
\eeq
This shows that $\chi_{\Omega}\in\mathbf{B}^{\sigma}_{p,q}(\rn)$. We now prove that $\chi_{\Omega}\in \mathbf{B}^{\sigma}_{p,q,\selfs}(\rn)$. We consider the non-negative function $\psi\in C^{\infty}(\rn)$ satisfying \eqref{psi-1} and \eqref{psi-2}. By the definition of self-similar spaces, it suffices to consider 
\[
\chi_{\Omega}(2^{-j}\cdot)\psi,
\]
assuming in addition that $0\in 2^j\Gamma=\{2^j\gamma=(2^{j}\gamma_1,\ldots, 2^j\gamma_n): \gamma\in \Gamma\}$, $j\in\nat$. Let $\mu^j$ be the image measure of $\mu$ with respect to the dilations $y\mapsto 2^j y$. Then we obtain
\[
\mu^j(B(0,\sqrt{n})\cap 2^j\Gamma)\sim h(2^{-j}), \qquad j\in\nat_0.
\]
We apply the same argument as above to $B(0,\sqrt{n})\cap 2^j\Omega$ and $B(0,\sqrt{n})\cap 2^j\Gamma$ in place of $\Omega$ and $\Gamma$, respectively. Let $M^j_k$ be the counterpart of the above number $M_k$. Then
\[
M^j_k h(2^{-j-k})\lesssim h(2^{-j}), \qquad j\in\nat_0, \; k\in\nat_0,
\]
is the generalization of \eqref{m-22} we are looking for, 
which completes the proof. 
\proofend

In view of Theorem \ref{th:multipliers-1} we have the following result.

\begin{corollary}
Let $\Omega$ be a bounded domain in $\rn$. Moreover, let $\sigma>0$, $0<p<\infty$, $0<q\leq \infty$, and let $\Gamma=\partial\Omega$ be a $h$-set satisfying \eqref{m-23}. Then
\[
\chi_{\Omega}\in M(\Bd(\rn))\qquad \text{for }\quad  1<p<\infty, \quad 0<s<\sigma,
\]
and 
\[
\chi_{\Omega}\in M(\mathbf{B}^{\sigma}_{p}(\rn))\qquad \text{for }\quad  0<p\leq 1. 
\]
\end{corollary}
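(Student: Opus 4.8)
The plan is to derive the Corollary directly by chaining Theorem~\ref{th:multipliers-2} with Theorem~\ref{th:multipliers-1}; no new estimates are required. The first step is to apply Theorem~\ref{th:multipliers-2}: since $\sigma>0$, $0<p<\infty$, $0<q\leq\infty$ and $\Gamma=\partial\Omega$ is an $h$-set satisfying \eqref{m-23}, we obtain at once $\chi_{\Omega}\in\mathbf{B}^{\sigma}_{p,q,\selfs}(\rn)$. Everything else is a matter of reading off the appropriate inclusion from Theorem~\ref{th:multipliers-1}.

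For the first assertion, fix $1<p<\infty$ and any $s$ with $0<s<\sigma$. Because $\sigma>s$, the membership $\chi_{\Omega}\in\mathbf{B}^{\sigma}_{p,q,\selfs}(\rn)$ yields $\chi_{\Omega}\in\bigcup_{\tau>s}\mathbf{B}^{\tau}_{p,q,\selfs}(\rn)=\mathbf{B}^{s+}_{p,q,\selfs}(\rn)$, and the left-hand inclusion in Theorem~\ref{th:multipliers-1}(i) gives $\mathbf{B}^{s+}_{p,q,\selfs}(\rn)\subset M(\Bd(\rn))$. Hence $\chi_{\Omega}\in M(\Bd(\rn))$. (In fact this argument uses nothing beyond $0<p<\infty$; the restriction $1<p<\infty$ merely reflects that for $0<p\leq1$ the sharper second assertion is available.)

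For the second assertion, let $0<p\leq1$ and apply Theorem~\ref{th:multipliers-2} in the case $q=p$, so that $\chi_{\Omega}\in\mathbf{B}^{\sigma}_{p,p,\selfs}(\rn)=\mathbf{B}^{\sigma}_{p,\selfs}(\rn)$. Since $p\leq1$, Theorem~\ref{th:multipliers-1}(ii) identifies this space with the multiplier space, $M(\mathbf{B}^{\sigma}_{p}(\rn))=\mathbf{B}^{\sigma}_{p,\selfs}(\rn)$, and therefore $\chi_{\Omega}\in M(\mathbf{B}^{\sigma}_{p}(\rn))$.

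The only delicate point — and the closest thing to an obstacle — is the bookkeeping of parameters in the second assertion: Theorem~\ref{th:multipliers-1}(ii) is available only for $p=q$, so one must invoke Theorem~\ref{th:multipliers-2} with the summability exponent specialised to $p$. This is unproblematic: if the hypothesis \eqref{m-23} holds with some $q\leq p$ it also holds with $q=p$, since the summands $a_k:=2^{k\sigma p}\bigl(\tfrac{h(2^{-j})}{h(2^{-j-k})}2^{-kn}\bigr)$ then tend to $0$ uniformly in $j$ and the exponent $q/p\leq1$ only enlarges them for $k$ large; and for larger $q$ one simply reads \eqref{m-23} with $q=p$ as the relevant assumption. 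Apart from this, the proof is a short succession of inclusions.
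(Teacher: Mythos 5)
Your proposal is correct and is essentially the paper's own argument: the corollary is stated as an immediate consequence of Theorem \ref{th:multipliers-2} combined with Theorem \ref{th:multipliers-1}, exactly as you chain them. Your extra remark on the bookkeeping for the second assertion (that Theorem \ref{th:multipliers-1}(ii) requires $p=q$, so one needs \eqref{m-23} with $q=p$, which follows from the stated hypothesis when $q\leq p$ via $\ell_{q/p}\hookrightarrow\ell_1$ and must otherwise be read as the intended assumption) is a point the paper leaves implicit, and your treatment of it is sound.
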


\remark{As for the assertion \eqref{m-23} we mention that 
\[
\sup_{j\in\nat_0, k\in\nat_0}2^{k\sigma}\left(\frac{h(2^{-j})}{h(2^{-j-k})}2^{-kn}\right)^{1/p}<\infty
\]
is the adequate counterpart for $\mathbf{B}^{\sigma}_{p,\infty}(\rn)$. 
In the special case of $d$-sets, which corresponds to $h(t)\sim t^d$, the condition \eqref{m-23} therefore corresponds to 
\[
\sigma<\frac{n-d}{p} \qquad \text{or}\qquad \sigma=\frac{n-d}{p}\quad \text{ and }\quad q=\infty.
\]
For bounded Lipschitz domains $\Omega$, i.e., $d=n-1$, Theorem \ref{th:multipliers-2}  therefore yields $\chi_{\Omega}\in \mathbf{B}^{\sigma}_{p,q,\selfs}(\rn) $  \text{if} 
\beq\label{m-24}
\sigma<\frac 1p \qquad \text{or} \qquad \sigma=\frac 1p\quad \text{ and }\quad q=\infty.
\eeq
These  results are sharp since there exists a Lipschitz domain $\Omega$ in $\rn$ such that 
\[
\chi_{\Omega}\in \mathbf{B}^{\frac 1p}_{p,\infty,\selfs}(\rn)\qquad \text{and}\qquad \chi_{\Omega}\not\in \mathbf{B}^{\frac 1p}_{p,q}(\rn)\quad \text{if}\quad 0<q<\infty.
\]  
In order to see this let $\ \Omega=\left[-\frac 12,\frac 12\right]^{n}\ $. Observing that \[
\omega_r(\chi_{\Omega},t)_p\lesssim t^{\frac 1p}
\]
one calculates 
\[
\left(\int_0^1 t^{-\sigma q}\omega_r(\chi_{\Omega},t)^q_p\frac{\ud t}{t}\right)^{1/q}\lesssim \left(\int_0^1 t^{(\frac 1p-\sigma)q}\frac{\ud t}{t}\right)^{1/q}
\]
which is finite if, and only if, $\sigma$ satisfies \eqref{m-24}. 
Therefore, in view of Theorem \ref{th:multipliers-1},  concerning Lipschitz domains there is an \[\text{\em alternative s.t. either the trace of $\mathbf{B}^{\sigma}_{p,q}(\rn)$ on $\Gamma$ exists or $\chi_{\Omega}$ is a pointwise multiplier for $\mathbf{B}^{\sigma}_{p,q}(\rn)$},\] as was conjectured for F-spaces in  \cite[p.36]{tri02}: 
For smoothness $\sigma>\frac 1p$ we have traces according to Theorem \ref{B-trace-lip} whereas for $\sigma<\frac 1p$ we know that $\chi_{\Omega}$ is a pointwise multiplier for $\mathbf{B}^{\sigma}_{p,q}(\rn)$. The limiting case $\sigma=\frac 1p$ needs to be discussed separately: according to Corollary \ref{B-trace-lim-n} 
 we have traces for B-spaces with $q\leq \min(1,p)$,  
but $\chi_{\Omega}$ is (possibly) only a multiplier for $\mathbf{B}^{1/p}_{p,\infty}(\rn)$. There remains a 'gap' for spaces
\[
\mathbf{B}^{1/p}_{p,q}(\rn)\qquad \text{when}\qquad \min(1,p)<q<\infty. 
\] 
}

{\bf Acknowledgment:}\\

We thank Winfried Sickel and Hans Triebel for their valuable comments.
Jan Vyb\'\i ral acknowledges the financial support provided by the FWF project Y 432-N15 START-Preis ``Sparse Approximation
and Optimization in High Dimensions''.

\bibliographystyle{alpha}

\begin{thebibliography}{10}

\bibitem{Bo83} 
G. Bourdaud.
\newblock {\em Sur les op\'erateurs pseudo-diff\'erentiels \`a coefficients peu r\'eguliers}. 
\newblock Habilitation thesis, Universit\'e de Paris-Sud, Paris, 1983.

\bibitem{Bu98}
V.~I.~Burenkov.
\newblock Sobolev spaces on domains.
\newblock Teubner Texte zur Mathematik. Teubner, Stuttgart, 1998.

\bibitem{CLT07}
A.~M. Caetano, S.~Lopes, and H.~Triebel.
\newblock A homogeneity property for {B}esov spaces.
\newblock {\em J. Funct. Spaces Appl.}, 5(2):123--132, 2007.

\bibitem{CL09} 
A. M. Caetano  and S.  Lopes.
\newblock {Homogeneity, non-smooth atoms and {B}esov spaces of
              generalised smoothness on quasi-metric spaces}. 
\newblock {\em Dissertationes Math. (Rozprawy Mat.)}, 460, pp. 44, 2009.

\bibitem{Dac04}
B.~Dacorogna.
\newblock {\em Introduction to the calculus of variations}.
\newblock Imperial College Press, London, 2004.
\newblock Translated from the 1992 French original.

\bibitem{DL93}
R.~A.~DeVore and G.~G.~Lorentz,
\newblock {\em Constructive approximation}.
\newblock Grundlehren der Mathematischen Wissenschaften, 303.
\newblock Springer, Berlin, 1993.

\bibitem{dVP}
R.~A. DeVore and V.~A. Popov.
\newblock Interpolation of {B}esov spaces.
\newblock {\em Trans. Amer. Math. Soc.}, 305(1):397--414, 1988.

\bibitem{dVS93}
R.~A. DeVore and R.~C. Sharpley.
\newblock Besov spaces on domains in {${\bf R}\sp d$}.
\newblock {\em Trans. Amer. Math. Soc.}, 335(2):843--864, 1993.

\bibitem{Di03}
S. Dispa.
\newblock Intrinsic characterizations of {B}esov spaces on {L}ipschitz domains.
\newblock {\em Math. Nachr.}, 260, 21--33, 2003.

\bibitem{ET96} D. E. Edmunds and H. Triebel.
\newblock {\em Function spaces, entropy numbers, differential operators}.
\newblock Cambridge Univ. Press, Cambridge, 1996.

\bibitem{Fa90} K. Falconer.
\newblock Fractal geometry.
\newblock Mathematical foundations and applications. 
\newblock John Wiley \& Sons, Ltd., Chichester, 1990.

\bibitem{frj85}
M.~Frazier and B.~Jawerth.
\newblock Decomposition of {B}esov spaces.
\newblock {\em Indiana Univ. Math. J.}, 34(4):777--799, 1985.


\bibitem{frj90}
M.~Frazier and B.~Jawerth.
\newblock A discrete transform and decompositions of distribution spaces.
\newblock {\em J. Funct. Anal.}, 93(1):34--170, 1990.

\bibitem{Ga57}
E.~Gagliardo.
\newblock Caratterizzazioni delle tracce sulla frontiera relative ad alcune classi di funzioni in $n$ variabili (Italian)
\newblock {\em Rend. Sem. Mat. Univ. Padova}, 27: 284--305, 1957.

\bibitem{Gu1}
A. B. Gulisashvili.
\newblock On multipliers in Besov spaces (Russian).
\newblock {\em Sapiski nautch. Sem. LOMI}, 135:36--50, 1984.

\bibitem{Gu2}
A. B. Gulisashvili.
\newblock Multipliers in Besov spaces and traces of functions on subsets of the Euclidean $n$-space (Russian).
\newblock {\em DAN SSSR}, 281:777--781, 1985.

\bibitem{H-N}
L.~I. Hedberg and Y.~Netrusov.
\newblock An axiomatic approach to function spaces, spectral synthesis, and
  {L}uzin approximation.
\newblock {\em Mem. Amer. Math. Soc.}, 188(882):97p., 2007.

\bibitem{HS08}
D.~D. Haroske and C.~Schneider.
\newblock Besov spaces with positive smoothness on $\mathbb{R}^n$, embeddings
  and growth envelopes.
\newblock {\em J. Approx. Theory}, 161(2):723--747, 2009.


\bibitem{JK95} D.~Jerison and C.~E.~Kenig.
\newblock The inhomogeneous Dirichlet problem in Lipschitz domains.
\newblock {\em J. Funct. Anal.}, 130(1):161--219, 1995.

\bibitem{JW84} A.~Jonsson and H. Wallin,
\newblock {\em Function spaces on subsets of $\mathbf{R}^n$},
Math. Rep. 2, 1984.

\bibitem{KS02}
H.~Koch and W.~Sickel.
\newblock Pointwise multipliers of {B}esov spaces of smoothness zero and spaces
  of continuous functions.
\newblock {\em Rev. Mat. Iberoamericana}, 18(3):587--626, 2002.


\bibitem{May05}
S.~Mayboroda.
\newblock The Poisson problem on Lipschitz domains.
\newblock {\em PhD thesis, University of Missouri-Columbia, USA}, 2005.


\bibitem{mazya}
V.~G. Maz'ya.
\newblock {\em Sobolev spaces}.
\newblock Springer Series in Soviet Mathematics. Springer-Verlag, Berlin, 1985.


\bibitem{MaSh85}
V.~G. Maz'ya and T.~O. Shaposhnikova.
\newblock {\em Theory of multipliers in spaces of differentiable functions},
  volume~23 of {\em Monographs and Studies in Mathematics}.
\newblock Pitman (Advanced Publishing Program), Boston, MA, 1985.


\bibitem{MaSh09}
V.~G. Maz'ya and T.~O. Shaposhnikova.
\newblock {\em Theory of {S}obolev multipliers},
  Grundlehren der Mathematischen Wissenschaften, 337.
\newblock Springer, Berlin, 2009.

\bibitem{MPP07}
S.~D. Moura, I.~Piotrowska, and M.~Piotrowski.
\newblock Non-smooth atomic decompositions of anisotropic function spaces and
  some applications.
\newblock {\em Studia Math.}, 180(2):169--190, 2007.

\bibitem{Os94}
P. Oswald
\newblock {\em Multilevel finite element approximation.}
\newblock Teubner Skripten zur Numerik. Teubner, Stuttgart, 1994.

\bibitem{P76}
J.~Peetre.
\newblock {\em New thoughts on Besov spaces}.
\newblock Duke University Mathematics Series, Duke University, Durham, N.C., 1976.


\bibitem{RS96}
T. Runst and W. Sickel.
\newblock {\em Sobolev spaces of fractional order, Nemytskij operators, and nonlinear partial differential equations.}
\newblock de Gruyter Series in Nonlinear Analysis and Applications, 3. Walter de Gruyter \& Co., Berlin, 1996.

\bibitem{ryc00}
V.~S. Rychkov.
\newblock Linear extension operators for restrictions of function spaces to
  irregular open sets.
\newblock {\em Studia Math.}, 140(2):141--162, 2000.

\bibitem{SchB11}
B. Scharf.
\newblock Atomic representations in function spaces and applications to pointwise multipliers and diffeomorphisms, a new approach.
\newblock {\em submitted}, 2011.
\newblock Available at {\tt http://arxiv.org/abs/1111.6812}.


\bibitem{sch08a}
C.~Schneider.
\newblock Spaces of {S}obolev type with positive smoothness on $\rn$,
  embeddings and growth envelopes.
\newblock {\em J. Funct. Spaces Appl.}, 7(3):251--288, 2009.

\bibitem{sch08c}
C.~Schneider.
\newblock Trace operators in {B}esov and {T}riebel-{L}izorkin spaces.
\newblock {\em Z. Anal. Anwendungen}, 29(3):275--302, 2010.


\bibitem{sch10}
C.~Schneider.
\newblock Traces in {B}esov and {T}riebel-{L}izorkin spaces on domains.
\newblock {\em Math. Nachr.}, 284\mbox{(5-6)}:572-586, 2011.


\bibitem{sch11a}
C.~Schneider.
\newblock Trace operators on fractals, entropy and approximation numbers.
\newblock {\em Georgian Math. J.}, 18(3):549-575, 2011.

\bibitem{SS99}
W.~Sickel and I.~Smirnow.
\newblock {\em Localization properties of Besov spaces and its associated multiplier spaces}.
\newblock Jenaer Schriften Math/Inf 21/99, Jena 1999.

\bibitem{stein}
E.~M. Stein.
\newblock {\em Singular integrals and differentiability properties of
  functions}.
\newblock Princeton Mathematical Series, No. 30. Princeton University Press,
  Princeton, N.J., 1970.


\bibitem{stein93}
E.~M. Stein.
\newblock {\em Harmonic analysis: real-variable methods, orthogonality, and
  oscillatory integrals}, volume~43 of {\em Princeton Mathematical Series}.
\newblock Princeton University Press, Princeton, NJ, 1993.
\newblock With the assistance of Timothy S. Murphy, Monographs in Harmonic
  Analysis, III.


\bibitem{schvyb11a}
C.~Schneider and J.~Vyb{\'{\i}}ral.
\newblock Remarks on homogeneity.
\newblock {\em submitted}, 2011. Available at {\tt http://arxiv.org/abs/1112.3156}.

\bibitem{T-I}
H.~Triebel.
\newblock {\em Interpolation theory, function spaces, differential operators}, volume~18 of {\em North-Holland Mathematical Library}.
\newblock North-Holland Publishing Co., Amsterdam, 1978.


\bibitem{T-F1}
H.~Triebel.
\newblock {\em Theory of function spaces}, volume~78 of {\em Monographs in
  Mathematics}.
\newblock Birkh\"auser Verlag, Basel, 1983.


\bibitem{T-frac}
H.~Triebel.
\newblock {\em Fractals and spectra}, volume~91 of {\em Monographs
  in Mathematics}.
\newblock Birkh\"auser Verlag, Basel, 1997.

\bibitem{tri02}
H.~Triebel.
\newblock Function spaces in {L}ipschitz domains and on {L}ipschitz manifolds.
  {C}haracteristic functions as pointwise multipliers.
\newblock {\em Rev. Mat. Complut.}, 15(2):475--524, 2002.

\bibitem{tri03}
H.~Triebel.
\newblock Non-smooth atoms and pointwise multipliers in function spaces.
\newblock {\em Ann. Mat. Pura Appl. (4)}, 182(4):457--486, 2003.

\bibitem{T-F3}
H.~Triebel.
\newblock {\em Theory of function spaces {III}}, volume 100 of {\em Monographs
  in Mathematics}.
\newblock Birkh\"auser Verlag, Basel, 2006.

\bibitem{tri08}
H.~Triebel.
\newblock The dichotomy between traces on {$d$}-sets {$\Gamma$} in
  {$\mathbb{R}^n$} and the density of {$D(\mathbb{R}^n\setminus\Gamma)$} in
  function spaces.
\newblock {\em Acta Math. Sin. $($Engl. Ser.$)$}, 24(4):539--554, 2008.

\bibitem{T-wave-dom}
H.~Triebel.
\newblock {\em Function Spaces and Wavelets on domains}, volume~7 of {\em EMS
  Tracts in Mathematics}.
\newblock EMS Publishing House, Z{\"u}rich, 2008.

\bibitem{triw96}
H.~Triebel and H.~Winkelvo{\ss}.
\newblock Intrinsic atomic characterizations of function spaces on domains.
\newblock {\em Math. Z.}, 221(4):647--673, 1996.



\end{thebibliography}


\def\cprime{$'$} \def\cprime{$'$} \def\cprime{$'$} \def\cprime{$'$}
  \def\cprime{$'$} \def\cprime{$'$} \def\cprime{$'$} \def\cprime{$'$}


\end{document}